\newcommand{\C}{\mathbb{C}}
\newcommand{\Z}{\mathbb{Z}}
\newcommand{\N}{\mathbb{N}}
\newcommand{\D}{\mathbb{D}}
\newcommand{\T}{\mathbb{T}}
\theoremstyle{plain}
\newtheorem{prop}{Proposition}[section]
\newtheorem{thm}[prop]{Theorem}
\newtheorem{lemma}[prop]{Lemma}
\newtheorem{cor}[prop]{Corollary}
\theoremstyle{definition}
\theoremstyle{remark}
\numberwithin{equation}{section}
\begin{document}

\title[Generalised Harmonic functions]{A complex Lie algebra of rotationally symmetric operators and their harmonics}

\date{\today}

\author{Markus Klintborg}

\address{Mathematics, Faculty of Textiles, Engineering and Business, Department of Engineering, Bor\aa s University, Sweden}

\email{markus.klintborg@hb.se}

\address{Mathematics, Faculty of Science, Centre for Mathematical Sciences, 
Lund University, Sweden}

\email{markus.klintborg@math.lu.se}

\begin{abstract}
We describe the solutions to a family of rotationally symmetric second order partial differential equations in the complex plane that arises from a four-dimensional complex Lie algebra whose spanning set generates the algebra from which such generalised harmonic functions derive. We show that every one of these solutions have a canonical series representation and retrieve those obtained in the case of Laplace and Helmholtz equation. These sums are given in confluent hypergeometric terms that asymptotically correspond to the complex exponential function.

\end{abstract}

\subjclass[2010]{Primary: 31A05; Secondary: 33C15, 35J15}

\keywords{Harmonic function, Power series, 
Confluent hyper\-geometric function, Bessel function}

\maketitle

\setcounter{section}{-1}

\section{Introduction}

It is a classical result of harmonic function theory that any harmonic function can be represented as an infinite converging sum of homogeneous polynomials within a sufficiently small neighbourhood of a point in its domain. Later extensions have also shown that these classical representations may be retrieved from a more general setting. We aim to build on these later developments and derive the homogeneous expansions for the solutions to a family of second order differential operators in the complex plane that retrieve those obtained in the case of Laplace and Helmholtz. 

The collection $\Omega$ with which we are concerned is given by the family of second order partial differential operators
\begin{equation}\label{pqrdiffoporder0}
M_{s,t,r;z}=\partial\bar\partial 
-s z\partial-t \bar{z} \bar\partial-r, \quad z \in \C,
\end{equation} 
for complex numbers $s,t,r \in \C$, where $\partial$ and $\bar{\partial}$ are the usual complex derivatives. The collection of such operators contains those commonly associated with Laplace and Helmholtz, and maintains the rotational invariance held by these two. In fact, these operators form the simplest conceivable such symmetric extension that admit both of these two classical cases. Indeed, the middle two terms that appear in \eqref{pqrdiffoporder0} are the only single first order differential operators of smallest degree that are rotationally invariant, while the remaining two terms remain familiarly so.

The last criteria can be said to constitute a least constraint for any viable framework that extends beyond these two classical cases, at least in so far as we expect to be able to decompose the problem into its constituent parts. In very loose terms, rotational invariance allows us to replace the higher-dimensional problem with a countable number of local ordinary problems, each considered with respect to the homogeneous parts separately. The local solutions may then be patched up to form a global solution, in the hope that the resulting sum converges in an appropriately chosen space of functions, commonly taken to be $C^{\infty}(U)$ for some open neighbourhood $U$ about the origin. It was thus raised to an axiom in \cite{GHFS} to account for those operators that have appeared in connection with generalised harmonic functions, the analysis of which crucially relies on their invariance under rotations. 

Formally speaking, the operators in \eqref{pqrdiffoporder0} can be taken as elements or generators of the $\C$-subalgebra $\mathfrak{H}_2=\langle \partial \bar \partial, z \partial, \bar z \bar \partial \rangle$ that is strictly contained in the algebra $\mathfrak{R}_2=\langle \partial \bar \partial, z \partial, \bar z \bar \partial, z \bar z \rangle$ of all operators in the second Weyl-algebra $A_2=A_2(\C)$ that commute with rotations. While weaker than that generated by the Laplace operator, which also remains invariant under displacements, this structure has been shown to be just rigid enough to accommodate the approach alluded to in the previous paragraph. This last claim rests on the assertion \cite[Cor. 4.4]{GHFS} that $D \in \mathfrak{R}_2$ commutes with rotations if and only if there is a sequence of ordinary differential operators $\{T_{m,D}\}_{m \in \Z}$ such that 
\begin{equation}
\label{Daction}
Df_m(|z|^2)\xi_m(z)=\xi_m(z)T_{m,D}f_m(|z|^2),
\end{equation} 
holds for all $m \in \Z$, where $\xi_m(z)=z^{m}$ for $m \geq 0$ and $\xi_m(z)=\bar z^{|m|}$ for $m < 0$. Provided that we can solve for each of the individual terms, a reasonable candidate for the solution to the homogeneous problem $M_{s,t,r}u=0$ corresponding to \eqref{pqrdiffoporder0} may then be taken to be 
\begin{equation*}
u(z) \sim \sum_{m=-\infty}^{\infty} f_m(|z|^2) \xi_m(z).
\end{equation*}
The identity in \eqref{Daction} therefore allows us to reduce the two-dimensional problem to an ordinary single-dimensional problem, and ensures that the span of each monomial term over the relevant ring or space of functions remains invariant under the action of any operator $D \in \mathfrak{R}_2$. 

The elements $T_{m,D}$ of the sequence associated with the operator $D \in \mathfrak{R}_2 \subset A_2$ are thus univariate operators in the first Weyl-algebra $A_1$ generated by $x$ and $d/dx$. This relation between the two ring theoretic frameworks $A_1$ and $A_2$ can also be understood in terms of mappings between the two, with the intention of specifying precisely which of the operators in \eqref{pqrdiffoporder0} that are mapped to the same points in $A_1$ for a given $m \in \Z$, as described in the first part of this text. This partitioning of the class in \eqref{pqrdiffoporder0} has its roots in earlier work on the subject, where it was realised that some parameter realms were easier to treat than others for certain families of operators in $\mathfrak{R}_2$, and for which a closer such analysis was required in order to treat the full parameter range. Examples include the setup in \cite{K}. And while the specific family considered in this text is more forgiving in such regards, the procedure may well be adapted to more general settings, and has proven to be a good starting point in the harmonic analysis for operators in $\mathfrak{H}_2$.  

Another distinctive aspect of $\Omega$ is that its closure $\bar \Omega$ under the vector space operations makes a four-dimensional complex Lie-algebra under the typical bi\-linear product. The defining relations under this operation can be neatly summed up as
\begin{equation*}
D_1D_2-D_2D_1=\gamma \partial\bar \partial,
\end{equation*}
for any given $D_1, D_2 \in \bar \Omega$ of the form
\begin{equation*} 
D_1=a_{1}+a_{2}z\partial+a_{3}\bar z \bar \partial + a_{4}\partial \bar \partial, \quad D_2=b_{1}+b_{2}z\partial+b_{3}\bar z \bar \partial + b_{4}\partial \bar \partial,
\end{equation*}
where
\begin{equation*}
\label{commutatormultiple}
\gamma=a_{4}(b_{2}+b_{3})-b_{4}(a_{2}+a_{3}).
\end{equation*}
The computations involved are straightforward, noting that the first order terms commute and that $[\partial \bar \partial, z \partial] \ = [\partial \bar \partial, \bar z \bar \partial]= \partial \bar \partial$. 

The last can be compared to some of the earlier well-studied cases. For example, and in the guiding case\footnote{A case that we will return to on several occasions, for comparative means and in order to keep repetition to a minimum.} of 
\begin{equation}
\label{order1weightoperators}
L_{c_1,c_2,c_3,c_4}=c_1(1-\lvert z\rvert^2)\partial_z\bar\partial_z 
+c_2z\partial_z+c_3\bar z\bar\partial_z+c_4 \in \mathfrak{H}_2,
\end{equation} 
that was treated in \cite{kopqseries}, the commutator of any two such operators remains as before, but now with the problem that the operator $\partial \bar \partial$ of Laplace is nowhere contained, and so fails in this regard without a suitable extension.\footnote{An infinite extension in fact, and one that contains the polynomial algebra generated by $\partial \bar \partial$. This can be seen by first noting that $[\partial \bar \partial,z\partial]=\partial \bar \partial$, and then making use of the standard identity $[x^n,y]=\sum_{j=0}^{n-1} x^j [x,y] x^{n-j-1}$ to show that $\big[\partial^k \bar \partial^k,[\partial \bar \partial,(1-|z|^2)\partial \bar \partial]\big]=2k\partial^{k+1}\bar \partial^{k+1}$.} 

Proceeding to the analytic and main parts of this text, we shall refer to a function $u$ defined on the open ball $B_{\rho}$ of radius $\rho > 0$ centred at zero as a solution or generalised harmonic function on $B_{\rho}$ if $u$ is two times continuously differentiable on $B_{\rho}$ and satisfies
\begin{equation}
\label{pqrdiffoporder0equation}
M_{s,t,r}u=0\quad \text{in}\ B_{\rho}.
\end{equation} 
Implicit in this last description are the three parameters $s,t,r \in \C$ with respect to which the function $u$ satisfies \eqref{pqrdiffoporder0equation}, and we have chosen to drop the adjective ''$(s,t,r)-$'' that sometimes accompany or forego the term ''function'' in reference to those generalised harmonic functions that satisfy an equation similar to \eqref{pqrdiffoporder0equation}. And while it makes sense to divide between certain cases of the given parameter range initially, our end result will be indifferent to such a preliminary distinction. We also note that the classical case of Helmholtz is retrieved by setting the first two para\-meters $s$ and $t$ to zero in \eqref{pqrdiffoporder0equation}, while Laplace equation is retrieved when all three parameters are ignored. 

To state our primary result, we introduce the family of functions 
\begin{equation}\label{pqrorderzerobasicfunctionsintro}
\mathcal{P}(\alpha,\beta|\gamma;z)=\sum_{m=0}^{\infty}\frac{(\alpha,\beta)_{m}}{(\gamma)_{m}}\frac{z^m}{m!}, \quad z \in \C,
\end{equation}  
for suitably chosen parameters $\alpha,\beta,\gamma \in \C$. The symbol $( x,y )_n$ in \eqref{pqrorderzerobasicfunctionsintro} denotes the generalised Pochhammer symbol  
\begin{equation}
\label{ksymbol}
(x,y)_{n}=x(x+y)(x+2y)\ldots(x+ny-y), \quad (x,y)_{0}=1,
\end{equation}
and agrees with the usual Pochhammer symbol $(x)_n$ when $y$ is set to unity.\footnote{When $y=k$ is taken to be a natural number, these symbols are also referred to as "Poch\-hammer k-symbols" \cite{diaz}, commonly denoted $(x)_{n,k}=(x,k)_n$.} 

We will then show that any generalised harmonic $u$ is smooth on $B_{\rho}$ and that it can be expanded as an absolutely converging sum of homogeneous terms in the form of 
\begin{align}\label{pqrorderzeroseriesintro}
u(z)&=\sum_{m=0}^{\infty}\frac{\partial^mu(0)}{m!}\mathcal{P}(r+sm,s+t|m+1;|z|^2)z^m \\ &+\sum_{m=1}^{\infty} \frac{\bar \partial^mu(0)}{m!}\mathcal{P}(r+tm,t+s|m+1;|z|^2) \bar z^m, \quad z \in B_{\rho} \notag.
\end{align}
In this connection, we shall initially divide between two cases $s \neq -t$ and $s=-t$. The first of these two, which we will refer to as the case of Kummer, involves the confluent hypergeometric function 
\begin{equation*}
\Phi(a,b,z)=\sum_{m=0}^{\infty}\frac{(a)_m}{(b)_m}\frac{z^m}{m!}, \quad z \in \C.
\end{equation*}
It is an entire function for $b \in \C  \setminus \{0,-1,-2,\ldots\}$ and perhaps the more familiar of the family of hypergeometric functions following that of Gauss. The latter case concerns Bessel's modified function
\begin{equation*}
I_m(z)=i^{-m}J_{m}(iz), \quad z \in \C,
\end{equation*}
where $J_{m}$ is the common Bessel function parametrized by $m \in \N$. We will point to these representational forms on numerous occasions, and show that the functions in \eqref{pqrorderzerobasicfunctionsintro} can be expressed in terms of Kummer when $s\neq -t$, while being those of Bessel in case of the latter.      

We will also show that 
\begin{equation*}
\lim\limits_{m \to \infty} \mathcal{P}(r+sm,s+t|m+1;z)=e^{sz},
\end{equation*}
where convergence is to be taken in uniform terms on compact subsets.  
 
This work is one in a series of reports on generalised harmonic functions with special emphasis on its series representations. Its intent is to contribute towards the growing number of examples that suggest that a coherent theory for such functions may be within reach under certain restrictions in regards to the symmetries that underlie their harmonic analysis, and to point out their intricate relation to those functions of a special kind. There is also good reason to believe that the content of the solutions that arise from the family $\Omega$ in \eqref{pqrdiffoporder0} is indicative of a more general phenomena. The last presumption rests in part on the findings cited throughout this text, and on numerous related studies \cite{ABC,Behm,BH,CW,Perala,Garabedian, Geller,LC,LW,LWX,O,OLipschitz,OWUHP,W}. We also suggest that the structural foundation natural to such functions is the one adopted above, in relation to which the given examples are of significant importance. And while earlier perspectives can be adopted, we have here taken as our starting point the work of Olofsson and Wittsten \cite{OW}, Olofsson \cite{O14}, and the even more recent studies referred to above.

\section{Preliminaries} 
 
The partial differential operators in \eqref{pqrdiffoporder0} are rotationally invariant. In contrast to the case of Laplace and that of Helmholtz however, it is in general not true that the translate $u(z_0+z)$ of a solution $u$ to \eqref{pqrdiffoporder0equation} is again a solution to this equation. This difference is explained by the middle two terms $z \partial$ and $\bar{z} \bar{\partial}$, and accounts for the non-translative behaviour in the more general case. In regards of scaling however, we can always assume that the solutions to \eqref{pqrdiffoporder0} are bound to the unit disc. For it is straightforward to check that the dilate $v(z):=u(\rho z)$ satisfies  \eqref{pqrdiffoporder0equation} in $\D$ whenever $u$ satisfies \eqref{pqrdiffoporder0equation} in $B_{\rho}$, excepting a multiple of $\rho^2>0$ in each parameter. Thus, and so far as the representations in \eqref{pqrorderzeroseriesintro} are concerned, these may then be recovered for any open ball $B_{\rho}$ centred at zero of radius $\rho > 0$ from those restricted to the unit disc. 

Against this background, we may then take the generalised harmonic function $u$ to be a $C^2(\D)$ function on the unit disc $\D$ that satisfies
\begin{equation}
\label{pqrdiffoporder0equationunitdisc}
M_{s,t,r}u=0\quad \text{in}\ \D.
\end{equation}  
The symmetry of the situation invokes the induced action $e^{i\theta} \in \T$ on a function $u$ on $\D$ in the form of  
\begin{equation*}
R_{e^{i\theta}} u(z) = u(e^{i\theta}z), \quad z \in \D.
\end{equation*} 
We can then ask for the possible decompositions in terms compatible with
\begin{equation}
\label{homogeneity}
R_{e^{i\theta}} u(z) = e^{im\theta}u(z), \quad z \in \D,
\end{equation}    
for $m \in \Z$. It is easy to check that the last criteria is fulfilled by the functions of form
\begin{equation}\label{mthhomogeneouspart}
u_m(z)=\frac{1}{2\pi}\int_{\mathbb{T}}e^{-im\theta}R_{e^{i\theta}}u(z) d\theta, \quad z \in \D,
\end{equation}
where $u$ is a suitably smooth function. The latter are usually referred to as the homo\-geneous parts of $u$, or the $m$:th homogeneous part in the case of a given $m \in \Z$. The name stems from their fulfilment of \eqref{homogeneity}, in which case they are also said to be of weight $m$, or homogeneous of order $m$ with respect to rotations. A generalised harmonic function is said to be decomposed or represented in such terms if
\begin{equation*}
u=\sum_{m = -\infty}^{\infty} u_m,
\end{equation*}
with convergence in $C^{\infty}(\D)$, given its usual topology.

Our primary goal is to show that a generalised harmonic function indeed has such a decomposition, and that its homogeneous parts are of the form given in \eqref{pqrorderzerobasicfunctionsintro}. Doing so involves an analysis of how \eqref{pqrdiffoporder0} acts on the homogeneous parts of such a function, which may be taken in the form of
\begin{equation}\label{homogeneousf}
u(z)=f(\lvert z\rvert^2) z^m,\quad z\in\D \setminus \{ 0 \},
\end{equation} 
for some $f \in C^2(0,1)$ and $m \in \N$, following the discussion in the fourth section of \cite{O}. In fact,
\begin{equation}\label{LtoTrule}
z^mf(|z|^2) \mapsto z^mT_{s,t,r;m}f(|z|^2), \quad m \in \N,
\end{equation}
under such an evaluation, where $T_{s,t,r;m}$ is the ordinary differential operator 
\begin{align}\label{Tgeneraliseddiffoperator}
T_{s,t,r;m;x}=x\frac{d^2}{dx^2} +[m+1-(s+t)x]\frac{d}{dx}-r-sm.
\end{align}
The existence and uniqueness of such a sequence is guaranteed by the fact that the operators in \eqref{pqrdiffoporder0} are invariant under rotations, as noted in connection with \eqref{Daction}, while the more constructive statement and the retrieval of \eqref{Tgeneraliseddiffoperator} remains subject to computations. To this end, we introduce the multiplication operator
\begin{equation}\label{multiplicationoperator}
\mathcal{M}_{\zeta;z}=\zeta(z), \quad z \in \C,
\end{equation}
that is indexed by $\zeta(z)=z$ for $z \in \C$, and acts according to $\mathcal{M}_{\zeta}f(z)=\zeta(z)f(z)$.

\begin{lemma}\label{differentiationlemmafirstorder}
Let $u$ be of the form \eqref{homogeneousf}. Then
\begin{equation*}
\left\{ 
\begin{array}{ccc}  
\mathcal{M}_{\bar \zeta} \bar \partial u &= & \mathcal{M}_{\zeta}^m|\zeta|^2(f' \circ |\zeta|^2), \\

\mathcal{M}_{\zeta} \partial u& = & m\mathcal{M}_{\zeta}^m(f \circ |\zeta|^2) +\mathcal{M}_{\zeta}^m |\zeta|^2  (f' \circ |\zeta|^2), \\    

\end{array}\right.
\end{equation*}
with equality in $\D \setminus \{0\}$, where $\mathcal{M}_{\zeta}$ is the multiplication operator in \eqref{multiplicationoperator}. 
\end{lemma}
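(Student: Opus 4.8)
The plan is to obtain both identities by direct differentiation, exploiting that the factor $z^m$ is holomorphic while the radial factor $f(|z|^2)$ is handled by the Wirtinger chain rule. The computational core reduces to the elementary identities $\partial(z\bar z)=\bar z$ and $\bar\partial(z\bar z)=z$, which upon composition give $\partial[f(|z|^2)]=\bar z\,f'(|z|^2)$ and $\bar\partial[f(|z|^2)]=z\,f'(|z|^2)$ on the punctured disc.

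For the first relation I would note that $\bar\partial z^m=0$, so the product rule collapses to $\bar\partial u=z^m\bar\partial[f(|z|^2)]=z^{m+1}f'(|z|^2)$; applying $\mathcal{M}_{\bar\zeta}$, that is multiplying by $\bar z$, then produces $\bar z\,z^{m+1}f'(|z|^2)=z^m|z|^2f'(|z|^2)$, which is precisely $\mathcal{M}_\zeta^m|\zeta|^2(f'\circ|\zeta|^2)$. For the second relation the product rule retains both terms: with $\partial z^m=mz^{m-1}$ one gets $\partial u=mz^{m-1}f(|z|^2)+z^m\bar z\,f'(|z|^2)$, and multiplying by $\zeta$ via $\mathcal{M}_\zeta$ yields $mz^mf(|z|^2)+z^m|z|^2f'(|z|^2)$, matching $m\mathcal{M}_\zeta^m(f\circ|\zeta|^2)+\mathcal{M}_\zeta^m|\zeta|^2(f'\circ|\zeta|^2)$.

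All equalities hold on $\D\setminus\{0\}$, the puncture being forced by $f\in C^2(0,1)$ and by the factor $z^{m-1}$. There is no genuine analytic obstacle; the proof is a short computation, and the only point demanding care is translating the function-theoretic output back into the multiplication-operator notation of the statement, keeping the operator $\mathcal{M}_\zeta^m$ (multiplication by $z^m$) distinct from the scalar functions $|\zeta|^2$ and $f'\circ|\zeta|^2$.
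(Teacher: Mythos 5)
Your computation is correct and is essentially the same as the paper's own proof: both arguments proceed by direct Wirtinger differentiation of $z^m f(|z|^2)$ and then multiply by $\bar z$ or $z$ to match the operator notation. No further comment is needed.
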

\begin{proof}
The result follows by straightforward differentiation. As for the first of these two equalities, we have 
\begin{equation*}
\bar z \bar \partial z^mf(|z|^2)=|z|^2z^mf'(|z|^2),
\end{equation*}
and
\begin{equation*}
z\partial z^mf(|z|^2)=zmz^{m-1}f(|z|^2)+zz^m\bar zf'(|z|^2)=mz^mf(|z|^2)+z^m|z|^2f'(|z|^2),
\end{equation*}
for $z \in \D \setminus \{0\}$.  
\end{proof}

The operator 
\begin{equation}\label{angularderivative}
A_z=\mathcal{M}_{\zeta;z}\partial_z-\mathcal{M}_{\bar \zeta;z}\bar \partial_z, \quad z \in \D,
\end{equation}
that appears next is the called the angular derivative, for reasons that become clear when expressed in its corresponding polar form. 

\begin{cor}\label{angularderivativecorollary}
Let $u$ be of the form \eqref{homogeneousf}. Then
\begin{equation}\label{angulardereq}
Au=mu,
\end{equation}
with equality in $\D \setminus \{0\}$.
\end{cor}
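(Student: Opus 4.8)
The plan is to obtain the identity immediately by subtracting the two equalities furnished by Lemma \ref{differentiationlemmafirstorder}. Recalling that the angular derivative in \eqref{angularderivative} is $A = \mathcal{M}_{\zeta}\partial - \mathcal{M}_{\bar\zeta}\bar\partial$, I would form $Au = \mathcal{M}_{\zeta}\partial u - \mathcal{M}_{\bar\zeta}\bar\partial u$ and insert on the right the two expressions supplied by the lemma for a function $u$ of the form \eqref{homogeneousf}.

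The key observation is that both of those expressions carry the \emph{same} radial contribution $\mathcal{M}_{\zeta}^m|\zeta|^2(f'\circ|\zeta|^2)$, i.e.\ $z^m|z|^2 f'(|z|^2)$, which therefore cancels in the difference. What survives is precisely the first summand $m\,\mathcal{M}_{\zeta}^m(f\circ|\zeta|^2) = m\, z^m f(|z|^2)$ appearing in the expression for $\mathcal{M}_{\zeta}\partial u$, and this equals $m u$ by \eqref{homogeneousf}. The equality then holds on $\D \setminus \{0\}$, matching the punctured domain on which Lemma \ref{differentiationlemmafirstorder} was established.

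There is essentially no obstacle here: the corollary is a one-line consequence of the lemma, the only point requiring attention being the exact cancellation of the two derivative-of-$f$ terms, which the lemma guarantees verbatim. As a sanity check in keeping with the name ``angular derivative'', I would note that in polar coordinates $z = re^{i\theta}$ one computes $A = \mathcal{M}_{\zeta}\partial - \mathcal{M}_{\bar\zeta}\bar\partial = -i\,\partial_\theta$, and since $u = f(r^2)\,r^m e^{im\theta}$ depends on $\theta$ only through the factor $e^{im\theta}$, applying $-i\,\partial_\theta$ returns $m u$, in agreement with \eqref{angulardereq}.
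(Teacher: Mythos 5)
Your proof is correct and follows the paper's own argument exactly: the paper likewise derives \eqref{angulardereq} by combining the two identities of Lemma \ref{differentiationlemmafirstorder}, where the radial terms $\mathcal{M}_{\zeta}^m|\zeta|^2(f'\circ|\zeta|^2)$ cancel and the surviving term is $m\,\mathcal{M}_{\zeta}^m(f\circ|\zeta|^2)=mu$. The polar-coordinate sanity check is a nice addition but not part of the paper's proof.
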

\begin{proof}
The function $u$ can be written as $u=\mathcal{M}_{\zeta}^m(f \circ |\zeta|^2)$ in $\D \setminus\{0\}$, which makes the conclusion evident in view of Lemma \ref{differentiationlemmafirstorder}.  
\end{proof}
 
\begin{lemma}\label{differentiationlemmasecondorder}
Let $u$ be of the form \eqref{homogeneousf}. Then
\begin{equation*}
\partial\bar\partial u = (m+1) \mathcal{M}_{\zeta}^m (f' \circ |\zeta|^2)+\mathcal{M}_{\zeta}^m |\zeta|^2 (f'' \circ |\zeta|^2),
\end{equation*}
with equality in $\D \setminus \{0\}$, where $\mathcal{M}_{\zeta}$ is the multiplication operator in \eqref{multiplicationoperator}.
\end{lemma}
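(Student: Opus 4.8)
The plan is to obtain $\partial\bar\partial u$ by composing the two first-order computations already recorded in Lemma \ref{differentiationlemmafirstorder}, rather than differentiating twice from scratch. The key observation is that applying $\bar\partial$ to a function of the homogeneous shape \eqref{homogeneousf} returns another function of exactly that shape, only with the weight raised by one and the radial profile replaced by its derivative; this lets me feed the output of the first differentiation back into the same lemma.

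First I would extract $\bar\partial u$ from the first identity of Lemma \ref{differentiationlemmafirstorder}. That identity reads $\mathcal{M}_{\bar\zeta}\bar\partial u=\mathcal{M}_{\zeta}^{m}|\zeta|^2(f'\circ|\zeta|^2)$, i.e.\ $\bar z\,\bar\partial u=z^{m}|z|^2 f'(|z|^2)=\bar z\,z^{m+1}f'(|z|^2)$ on $\D\setminus\{0\}$, using $|z|^2=z\bar z$. Cancelling the common factor $\bar z$, which is nonzero away from the origin, yields $\bar\partial u=z^{m+1}f'(|z|^2)$. This is again of the form \eqref{homogeneousf}, now with weight $m+1$ and radial part $f'$ in place of $f$.

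Next I would apply the second identity of Lemma \ref{differentiationlemmafirstorder} to this new function. With weight $m+1$ and profile $f'$ it gives $\mathcal{M}_{\zeta}\partial(\bar\partial u)=(m+1)\mathcal{M}_{\zeta}^{m+1}(f'\circ|\zeta|^2)+\mathcal{M}_{\zeta}^{m+1}|\zeta|^2(f''\circ|\zeta|^2)$, that is $z\,\partial\bar\partial u=(m+1)z^{m+1}f'(|z|^2)+z^{m+1}|z|^2 f''(|z|^2)$. Dividing through by $z$ on $\D\setminus\{0\}$ produces $\partial\bar\partial u=(m+1)z^{m}f'(|z|^2)+z^{m}|z|^2 f''(|z|^2)$, which is the claimed identity once rewritten with the multiplication operator.

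I do not expect a genuine obstacle here; the content is a direct chain- and product-rule computation. The only points that need care are the two cancellations of $\bar z$ and then $z$, which are licit precisely because equality is asserted on the punctured disc $\D\setminus\{0\}$, and the verification that $\bar\partial u$ really does fall under \eqref{homogeneousf} so that the second half of Lemma \ref{differentiationlemmafirstorder} applies verbatim. If one prefers to avoid reusing the lemma, the same result follows by computing $\bar\partial u=z^{m+1}f'(|z|^2)$ directly and then applying $\partial$ through $\partial(z\bar z)=\bar z$, which reproduces the two terms at once.
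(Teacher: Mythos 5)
Your proof is correct and follows essentially the same route as the paper: both obtain $\partial\bar\partial u$ by iterating the first-order formulas of Lemma \ref{differentiationlemmafirstorder}, the paper applying $\bar z\bar\partial$ to the identity for $z\partial u$ and cancelling $|z|^2$, you applying $z\partial$ to $\bar\partial u=z^{m+1}f'(|z|^2)$ and cancelling $z$. The only pedantic caveat is that $f'$ is merely $C^1$ when $f\in C^2(0,1)$, so $\bar\partial u$ does not literally satisfy \eqref{homogeneousf}; but the second identity of Lemma \ref{differentiationlemmafirstorder} uses only one derivative of the radial profile, so invoking it for the weight-$(m+1)$ function with profile $f'$ is harmless.
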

\begin{proof}
Note that
\begin{equation*}
\bar z \bar \partial z^m |z|^2f'(|z|^2)=|z|^2z^{m}f'(|z|^2)+|z|^2z^m |z|^2 f''(|z|^2),
\end{equation*} 
for $z \in \D \setminus \{0\}$. It then follows from Lemma \ref{differentiationlemmafirstorder} that 
\begin{equation*}
|z|^2\partial \bar \partial z^m f(|z|^2)=m|z|^2z^mf'(|z|^2)+|z|^2z^{m}f'(|z|^2)+|z|^2z^m |z|^2 f''(|z|^2),
\end{equation*}
for $z \in \D \setminus \{0\}$. This gives the desired result, following a cancellation of terms. 
\end{proof}

\begin{prop}\label{Mpqru}
Let $u$ be of the form \eqref{homogeneousf}. Then
\begin{equation}\label{actionMu}
M_{s,t,r}u=\mathcal{M}_{\zeta}^{m}(T_{s,t,r;m}f)\circ |\zeta|^2,
\end{equation}
with equality in $\D \setminus \{0\}$, where $T_{s,t,r;m}$ is the differential operator in \eqref{Tgeneraliseddiffoperator}.
\end{prop}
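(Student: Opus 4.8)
The plan is to apply the operator $M_{s,t,r}$ term by term to $u=\mathcal{M}_{\zeta}^{m}(f\circ|\zeta|^2)$ and to assemble the pieces already supplied by the preceding lemmas. Writing $M_{s,t,r}=\partial\bar\partial-s\,z\partial-t\,\bar z\bar\partial-r$, I would treat the four summands separately. The second order term $\partial\bar\partial u$ is handed to me directly by Lemma \ref{differentiationlemmasecondorder}, while the two first order contributions $z\partial u$ and $\bar z\bar\partial u$ are precisely the two identities recorded in Lemma \ref{differentiationlemmafirstorder}. The zeroth order term $ru$ is simply $r\,\mathcal{M}_{\zeta}^{m}(f\circ|\zeta|^2)$ in view of the form \eqref{homogeneousf}.

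Next I would substitute these four expressions into $M_{s,t,r}u$ and factor out the common monomial $\mathcal{M}_{\zeta}^{m}=z^m$, so that everything reduces to an ordinary differential expression in $f$ evaluated at $|z|^2$. Collecting the remaining terms according to the order of the derivative of $f$, the coefficient of $(f''\circ|\zeta|^2)$ is $|\zeta|^2$, coming solely from Lemma \ref{differentiationlemmasecondorder}; the coefficient of $(f'\circ|\zeta|^2)$ is $m+1$ from the second order term together with $-(s+t)|\zeta|^2$ from the two first order terms; and the coefficient of $(f\circ|\zeta|^2)$ is $-(r+sm)$, gathering the $-sm$ from $z\partial u$ and the $-r$ from $ru$. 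Writing $x=|\zeta|^2$, this is exactly
\[
x\frac{d^2}{dx^2}+[\,m+1-(s+t)x\,]\frac{d}{dx}-r-sm,
\]
which is the operator $T_{s,t,r;m}$ of \eqref{Tgeneraliseddiffoperator}, and hence yields \eqref{actionMu}.

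There is no genuine obstacle here beyond careful bookkeeping; the computation is routine once the two lemmas are in place. The only point deserving attention is the combination of the \emph{two} distinct sources of the $f'$ coefficient — the pure $\partial\bar\partial$ action, which after the cancellation in Lemma \ref{differentiationlemmasecondorder} contributes $m+1$, and the first order terms, which contribute $-(s+t)|\zeta|^2$ — so that these merge correctly into the linear polynomial $m+1-(s+t)x$. Since all the identities invoked hold in $\D\setminus\{0\}$, the resulting equality \eqref{actionMu} is likewise asserted on that punctured set, exactly as stated.
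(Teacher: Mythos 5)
Your proposal is correct and follows exactly the paper's own argument: apply Lemma \ref{differentiationlemmafirstorder} to the first order terms and Lemma \ref{differentiationlemmasecondorder} to the second order term, then sum and collect coefficients of $f$, $f'$, $f''$ to recognise $T_{s,t,r;m}$. The bookkeeping you spell out (the $f'$ coefficient $m+1-(s+t)|\zeta|^2$ coming from two sources, and $-(r+sm)$ for $f$) is precisely what the paper leaves implicit in ``a summation of terms''.
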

\begin{proof}
We can apply Lemma \ref{differentiationlemmafirstorder} and Lemma \ref{differentiationlemmasecondorder} to the first and second order terms of $M_{s,t,r}$, respectively. A summation of terms then gives \eqref{actionMu}. 
\end{proof}

\begin{cor}\label{Mpqruconj}
Let $u$ be of the form \eqref{homogeneousf}. Then 
\begin{equation}
\label{conjugateactionmu}
M_{s,t,r}\bar u=\mathcal{M}_{\bar \zeta}^{m}(T_{t,s,r;m}f)\circ |\zeta|^2,
\end{equation}
with equality in $\D \setminus \{0\}$, where $T_{t,s,r;m}$ is the differential operator in \eqref{Tgeneraliseddiffoperator}, with the complex parameters $s$ and $t$ interchanged.
\end{cor}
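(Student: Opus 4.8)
The plan is to obtain the statement directly from Proposition~\ref{Mpqru} by invoking the symmetry of the entire setup under the interchange of $z$ and $\bar z$. Writing $u(z)=f(|z|^2)z^m$, its conjugate $\bar u$ is again a radial multiple of a monomial, namely of $\bar z^m$ rather than $z^m$; and the operator $M_{s,t,r}$ together with the ansatz \eqref{homogeneousf} is carried to itself by complex conjugation provided one simultaneously swaps $\partial$ with $\bar\partial$ and the coefficients $s$ with $t$. The only term left untouched by this interchange is $\partial\bar\partial$, which both commutes and is conjugation-invariant, so that $r$ is preserved while $s$ and $t$ trade places; this is exactly the passage from $T_{s,t,r;m}$ to $T_{t,s,r;m}$ recorded in \eqref{Tgeneraliseddiffoperator}.

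To make this precise I would first record the Wirtinger conjugation rules $\overline{\partial w}=\bar\partial\bar w$ and $\overline{\bar\partial w}=\partial\bar w$, valid for any $w\in C^1(\D)$. Conjugating $M_{s,t,r}\bar u$ term by term and using that $\partial\bar\partial$ is conjugation-invariant then gives
\begin{equation*}
\overline{M_{s,t,r}\bar u}=\partial\bar\partial u-\bar t\,z\partial u-\bar s\,\bar z\bar\partial u-\bar r\,u=M_{\bar t,\bar s,\bar r}\,u,
\end{equation*}
where the first-order terms have exchanged their coefficients because conjugation turns $z\partial$ into $\bar z\bar\partial$ and conversely. Since $u$ is of the form \eqref{homogeneousf}, Proposition~\ref{Mpqru} applied with the triple $(\bar t,\bar s,\bar r)$ identifies the right-hand side as $\mathcal{M}_\zeta^m(T_{\bar t,\bar s,\bar r;m}f)\circ|\zeta|^2$. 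Conjugating back sends the factor $z^m$ to $\bar z^m$, and—since $|z|^2$ is real and radial differentiation commutes with conjugation—returns the coefficients of $T$ to their unbarred values, leaving $\mathcal{M}_{\bar\zeta}^m(T_{t,s,r;m}f)\circ|\zeta|^2$, which is \eqref{conjugateactionmu}.

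I expect no genuine difficulty here beyond careful bookkeeping in that middle display: one must confirm that the forced exchange $\partial\leftrightarrow\bar\partial$ transports the coefficient $s$ of $z\partial$ precisely onto $\bar z\bar\partial$ and the coefficient $t$ of $\bar z\bar\partial$ onto $z\partial$, so that the net effect is the clean swap $s\leftrightarrow t$ with $r$ fixed. As an alternative that bypasses the double conjugation, one may simply rerun the computations of Lemma~\ref{differentiationlemmafirstorder} and Lemma~\ref{differentiationlemmasecondorder} on a function of the form $g(|z|^2)\bar z^m$: the elementary identities $\partial(|z|^2)=\bar z$, $\bar\partial(|z|^2)=z$ and $\bar\partial\bar z^m=m\bar z^{m-1}$ reproduce the earlier intermediate expressions with $z$ and $\bar z$ interchanged, and assembling the four terms of $M_{s,t,r}$ again yields $T_{t,s,r;m}$, now acting on the radial part of $\bar u$.
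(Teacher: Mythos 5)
Your proposal is correct and follows essentially the same route as the paper: the key identity $M_{s,t,r}\bar u=\overline{M_{\bar t,\bar s,\bar r}u}$ obtained from the Wirtinger conjugation rules, followed by an application of Proposition~\ref{Mpqru} with the barred, swapped parameters and a final conjugation. The extra bookkeeping you supply (and the alternative direct recomputation on $g(|z|^2)\bar z^m$) only elaborates what the paper's one-line proof leaves implicit.
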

\begin{proof}
In view of \eqref{actionMu}, we have
\begin{equation}
\label{conjactionu}
M_{s,t,r} v=\overline{M_{\bar t, \bar s, \bar r} u}=\mathcal{M}_{\bar \zeta}^{m}(T_{t,s,r;m}f)\circ |\zeta|^2,
\end{equation}
where $v=\bar u$ denotes the complex conjugate of the function $u$.
\end{proof}
It is worth to elaborate some on the correspondence in \eqref{actionMu} between the family of operators in \eqref{pqrdiffoporder0} and the ordinary differential operators in \eqref{Tgeneraliseddiffoperator}. As before, let $\bar \Omega$ denote the four dim\-ensional vector space over $\C$ that is obtained from the family $\Omega$ in \eqref{pqrdiffoporder0} by closing it under the vector space operations, and write $A_1$ for the univariate (Weyl-)algebra generated by $x$ and $d/dx$ over $\C$. In view of the previous, we may then define the linear maps
\begin{align}\label{LtoTmap}
& \Lambda_{m}: \bar \Omega \to A_1, \quad m \in \Z,
\end{align}
that acts according to the rule imposed by \eqref{actionMu}, such that the image of $M_{s,t,r} \in \Omega$ under this map is $T_{s,t,r,m} \in A_1$ if and only if \eqref{actionMu} holds for all $u$ of the form \eqref{homogeneousf}, where for negative integers we impose \eqref{conjugateactionmu}. And while our concern here is limited to the family of operators in \eqref{pqrdiffoporder0}, these maps may as well be taken with respect to $\bar \Omega$ through Lemma \ref{differentiationlemmafirstorder} and Lemma \ref{differentiationlemmasecondorder}, or even $\mathfrak{R}_2$, as mentioned earlier. 

For the comparatively small family of operators under consideration, it will be convenient to identify the elements in $\Omega$ with their corresponding coordinates. We shall do so, and note from equation \eqref{Tgeneraliseddiffoperator} that ${\Lambda}_m$ induces a linear map with matrix representation 
\begin{equation}\label{matrixrepresentation}
\quad \Lambda_m \sim \begin{pmatrix}
    1 & 0 & 0 & 0 \\
    m+1 & 0 & 0 & 0 \\
    0 & -1 & -1 & 0 \\
    0 & -m & 0 & -1
\end{pmatrix},
\end{equation} 
for each $m \in \N$. We will refer to the corresponding matrix by the same symbol $\Lambda_m$, and record the following. 
\begin{prop}
The matrix in \eqref{matrixrepresentation} has a zero determinant. 
\hfill $\square$
\end{prop}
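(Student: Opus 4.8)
The plan is to read the determinant off directly from the matrix in \eqref{matrixrepresentation}, and the quickest route is to exhibit a linear dependence among its rows. Writing $R_1,\dots,R_4$ for the rows, one sees immediately that $R_1=(1,0,0,0)$ and $R_2=(m+1,0,0,0)$, so that $R_2=(m+1)R_1$. A matrix with two proportional rows is singular, whence $\det \Lambda_m=0$ for every $m \in \N$.

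If one prefers an expansion-based argument that does not single out the relation between the first two rows, I would instead develop the determinant along the first row. Since that row is $(1,0,0,0)$, only the $(1,1)$-cofactor survives, so $\det \Lambda_m$ equals the $3\times 3$ minor obtained by deleting the first row and column; that minor has $(0,0,0)$ as its first row and therefore vanishes, giving the same conclusion. A third, equally short route is to note that the last three columns are all supported on the final two coordinates, so that three vectors lying in a two-dimensional subspace are necessarily dependent; hence the columns of $\Lambda_m$ are dependent and the determinant is zero.

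Since the statement reduces to one of these one-line observations, there is no real obstacle to overcome; the only point to be careful about is that the claim holds uniformly in the parameter $m$, which each of the arguments above makes manifest. It may be worth recording, for the sake of the discussion preceding \eqref{matrixrepresentation}, that the vanishing determinant is exactly the assertion that $\Lambda_m$ fails to be injective on $\bar\Omega$; its nontrivial kernel is what allows distinct operators in the family \eqref{pqrdiffoporder0} to be carried to the same ordinary operator $T_{s,t,r;m}$ in $A_1$, and so accounts for the partitioning of $\Omega$ alluded to in the introduction.
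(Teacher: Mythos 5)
Your proof is correct and takes essentially the same route the paper intends: the paper states the proposition with the proof omitted as an immediate observation, and your row-proportionality argument (second row equals $m+1$ times the first) is exactly the one-line computation being suppressed. The alternative cofactor and column-dependence arguments, and the closing remark identifying the vanishing determinant with the failure of injectivity of $\Lambda_m$, are all consistent with the discussion surrounding \eqref{matrixrepresentation} and \eqref{equivalenceclasses}.
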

The last statement says that the map $\Lambda_m$ from $\C^4$ to itself fails to be injective. For a more complete picture in regards to the problem posed, it is meaningful to investigate this further, and to identify precisely which of the operators in $\Omega$ that agree in the restriction to functions of the form in \eqref{homogeneousf}. In other words, the vectors in $\C^4$ whose difference lies in the kernel of the linear map in \eqref{matrixrepresentation}. As shown below, they are precisely those points $v,w \in \C^4$ that belong to the same class under the relation imposed by $v \sim w$ if and only if 
\begin{equation}\label{equivalenceclasses}
v-w=\mu(0,1,-1,-m),
\end{equation}
for some $\mu \in \C$. 
\begin{lemma}\label{parametrizationlemma}
Let $m \in \N$. Then the vectors $v,w \in \C^4$ are equivalent in the sense of \eqref{equivalenceclasses} if and only if $v-w \in \text{ker} \ \Lambda_m$. 
\end{lemma}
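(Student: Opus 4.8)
The plan is to reduce the stated equivalence to a single identity about the kernel of the matrix in \eqref{matrixrepresentation}, and then to pin that kernel down by solving the associated homogeneous linear system directly. First I would note that, by the very definition \eqref{equivalenceclasses} of the relation, $v \sim w$ holds precisely when the difference $v-w$ is a scalar multiple of the fixed vector $(0,1,-1,-m)$; that is, $v \sim w$ if and only if $v-w \in \operatorname{span}\{(0,1,-1,-m)\}$. Consequently the lemma is equivalent to the single identity
\begin{equation*}
\ker \Lambda_m = \operatorname{span}\{(0,1,-1,-m)\},
\end{equation*}
after which the asserted characterisation in terms of $v-w \in \ker \Lambda_m$ is immediate.

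To establish this identity I would argue by double inclusion. For the inclusion $\supseteq$, a direct multiplication of the matrix in \eqref{matrixrepresentation} against the column $(0,1,-1,-m)^{T}$ shows that every entry of the product vanishes: the first two trivially, the third because $-1\cdot 1 - 1\cdot(-1)=0$, and the fourth because $-m\cdot 1 - 1\cdot(-m)=0$. Hence $(0,1,-1,-m)\in \ker\Lambda_m$, and since the kernel is a linear subspace its entire span lies in the kernel. For the reverse inclusion $\subseteq$, I would take an arbitrary $x=(x_1,x_2,x_3,x_4)$ with $\Lambda_m x = 0$ and read off the equations row by row: the first row forces $x_1=0$, the third row gives $x_3=-x_2$, and the fourth row gives $x_4=-mx_2$, so that $x = x_2(0,1,-1,-m)$ belongs to the span. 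The two inclusions together yield the identity and hence the lemma.

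Alternatively, and a touch more economically, one could combine the preceding proposition that $\det \Lambda_m = 0$ with the rank-nullity theorem. The rows indexed $1$, $3$, $4$ of \eqref{matrixrepresentation} are visibly linearly independent, so the rank equals three and the nullity equals one; since $(0,1,-1,-m)$ is a nonzero kernel vector exhibited above, it must span the whole kernel. Either route closes the argument.

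I do not anticipate any genuine obstacle here, as the claim is a routine computation in linear algebra once the reduction to $\ker \Lambda_m$ is carried out. The only point deserving slight care is the passage from the fact that $(0,1,-1,-m)$ lies in the kernel to the fact that it \emph{spans} the kernel, which is exactly what the explicit solution of the homogeneous system (or equivalently the rank count) secures: the earlier proposition that $\det \Lambda_m = 0$ guarantees a nontrivial kernel but not on its own that the kernel is one-dimensional.
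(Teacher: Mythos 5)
Your proof is correct and follows essentially the same route as the paper: both directions come down to verifying that $(0,1,-1,-m)$ lies in $\ker\Lambda_m$ and then solving the homogeneous system row by row to see that it spans the kernel, which is exactly the computation the paper performs via the system \eqref{vanishingcondition} and its orthogonal-complement description. Your remark that $\det\Lambda_m=0$ alone would not give one-dimensionality of the kernel is a sensible point of care, and your rank--nullity variant is a harmless reformulation of the same argument.
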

\begin{proof}
Let $v$ and $w$ be two complex vectors in $\C^4$ related according to \eqref{equivalenceclasses}. A quick calculation then shows that $\Lambda_m(v-w)=0$. Conversely, if $u$ is any complex vector in $\C^4$ such that $\Lambda_m(v-u)=0$ holds, then
\begin{equation}
\label{vanishingcondition}
\left\{ 
\begin{array}{ccc}
v_1-u_1 &= & 0, \\

(m+1)(v_1-u_1) &= & 0, \\
  
(v_2-u_2)+(v_3-u_3) &= & 0, \\

m(v_2-u_2)+(v_4-u_4) &= & 0. \\   
\end{array}\right.
\end{equation}
The orthogonal complement of the vectors $(1,0,0,0)$, $(0,1,1,0)$ and $(0,m,0,1)$ in $\C^4$ is spanned by the complex vector $(0,1,-1,-m)$ in $\C^4$, from which we conclude that $u \sim v$ in the sense of \eqref{equivalenceclasses}. 
\end{proof}
Stated differently, the kernel of the linear map in \eqref{matrixrepresentation} is the one dimensional subspace of $\C^4$ given by the line in \eqref{equivalenceclasses}. Note further that \eqref{vanishingcondition} describes the vanishing conditions for operators in $\bar \Omega$ with respect to all functions of the form in \eqref{homogeneousf} for a given $m \in \N$.   

\begin{cor}\label{classmap}
Let $m \in \N$. Then the matrix in \eqref{matrixrepresentation} is a bijective linear map from the quotient space induced by the relation in \eqref{equivalenceclasses} onto its image. 
\hfill $\square$
\end{cor}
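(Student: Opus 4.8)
The plan is to read the statement as the factorisation of $\Lambda_m$ through the quotient by its kernel, that is, as a concrete instance of the first isomorphism theorem for linear maps. The only substantive input is already recorded in Lemma \ref{parametrizationlemma}, which identifies the equivalence classes of $\sim$ with the cosets of $\ker \Lambda_m$; everything else is a routine verification of well-definedness and injectivity.

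First I would make the identification of the relevant quotient explicit. By Lemma \ref{parametrizationlemma}, two vectors satisfy $v \sim w$ precisely when $v - w \in \ker \Lambda_m$, so the equivalence classes of $\sim$ are exactly the cosets $v + \ker \Lambda_m$. Hence the quotient space induced by \eqref{equivalenceclasses} is nothing but $\C^4 / \ker \Lambda_m$ as a complex vector space, and the assertion to be proved is that $\Lambda_m$ induces an isomorphism from this quotient onto its image.

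Next I would check that $\Lambda_m$ is constant on each class, so that it descends to the quotient. If $v \sim w$, then $v - w \in \ker \Lambda_m$ by Lemma \ref{parametrizationlemma}, and linearity gives $\Lambda_m v = \Lambda_m w$. Thus the assignment $[v] \mapsto \Lambda_m v$ is well defined and yields a linear map $\bar \Lambda_m$ on the quotient whose image coincides with that of $\Lambda_m$, so surjectivity onto the image is automatic.

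It then remains to establish injectivity, which I expect to be immediate. Suppose $\bar \Lambda_m [v] = \bar \Lambda_m [w]$; then $\Lambda_m (v - w) = 0$, so $v - w \in \ker \Lambda_m$, and Lemma \ref{parametrizationlemma} returns $v \sim w$, i.e. $[v] = [w]$. Since the only genuine content is the kernel description already in hand, I do not anticipate any real obstacle: the corollary is precisely the standard statement that a linear map induces an isomorphism from the quotient by its kernel onto its image, specialised to the explicit kernel computed in \eqref{vanishingcondition}.
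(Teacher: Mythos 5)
Your argument is correct and is exactly the route the paper intends: the corollary is stated with no written proof precisely because, once Lemma \ref{parametrizationlemma} identifies the equivalence classes of \eqref{equivalenceclasses} with the cosets of $\ker\Lambda_m$, the statement is the first isomorphism theorem for linear maps. Your write-up simply makes explicit the well-definedness and injectivity checks that the paper leaves to the reader.
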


The above discussion leads to the following more concrete way of relating to the identification of operators in $\Omega$ through \eqref{equivalenceclasses}.
\begin{lemma}\label{equivalentoperatorsets}
Let $m \in \N$. Let $M_v$ and $M_w$ be operators in $\bar \Omega$ corresponding to the vectors $v$ and $w$, respectively, under the identification of $\bar \Omega$ with $\C^4$. Then $v \sim w$ in the sense of \eqref{equivalenceclasses} if and only if  
\begin{equation}\label{equivalentoperators}
M_{v}-M_{w}=\mu A-\mu m,
\end{equation}
where $A$ is the angular derivative given in \eqref{angularderivative}.
\end{lemma}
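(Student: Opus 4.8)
The plan is to reduce the claim to the kernel description already obtained in Lemma \ref{parametrizationlemma}, exploiting that the identification of $\bar\Omega$ with $\C^4$ is an isomorphism of vector spaces, so that the assignment $v \mapsto M_v$ is linear and in particular $M_v-M_w = M_{v-w}$. First I would recall that, under the coordinate convention fixed in \eqref{matrixrepresentation}, a vector $(p_1,p_2,p_3,p_4)\in\C^4$ corresponds to the operator $p_1\partial\bar\partial - p_2 z\partial - p_3\bar z\bar\partial - p_4$ in $\bar\Omega$. Lemma \ref{parametrizationlemma} then says that $v\sim w$ in the sense of \eqref{equivalenceclasses} is equivalent to $v-w\in\ker\Lambda_m$, which is the one-dimensional line $\{\mu(0,1,-1,-m):\mu\in\C\}$.

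The heart of the matter is to translate this single spanning vector into an operator. Applying the coordinate-to-operator correspondence to $(0,1,-1,-m)$ produces $-z\partial+\bar z\bar\partial+m$, and by the definition of the angular derivative in \eqref{angularderivative}, namely $A=\mathcal{M}_\zeta\partial-\mathcal{M}_{\bar\zeta}\bar\partial = z\partial-\bar z\bar\partial$, this is exactly $-(A-m)$. Hence, for $v-w=\mu(0,1,-1,-m)$, linearity gives $M_v-M_w=-\mu(A-m)=-\mu A+\mu m$. Since $\mu$ ranges freely over $\C$, replacing $\mu$ by $-\mu$ recasts this in the form $M_v-M_w=\mu A-\mu m$ recorded in \eqref{equivalentoperators}; conversely, any operator identity of this shape forces $v-w$ to be the corresponding scalar multiple of $(0,1,-1,-m)$, since the correspondence is injective. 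This settles both implications at once.

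As a consistency check I would invoke Corollary \ref{angularderivativecorollary}: on a homogeneous function $u$ of the form \eqref{homogeneousf} one has $Au=mu$, so $A-m$ annihilates every such $u$. This is precisely why $(0,1,-1,-m)$ spans $\ker\Lambda_m$, for $\Lambda_m$ records the action of operators in $\bar\Omega$ on functions of that form, and it is exactly the operators differing by a multiple of $A-m$ that such an evaluation cannot distinguish.

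I do not anticipate a genuine obstacle; the statement is essentially a reformulation of Lemma \ref{parametrizationlemma} in operator language. The only point demanding care is the bookkeeping of signs and the coordinate convention: one must confirm that the map from coordinate vectors to operators is linear, so that differences transport correctly, and keep track of the minus signs attached to $s$, $t$, and $r$ in $M_{s,t,r}$, which is what turns the kernel vector $(0,1,-1,-m)$ into $m-A$ rather than $A-m$ and accounts for the freedom in the sign of $\mu$.
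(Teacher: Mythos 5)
Your proof is correct and takes essentially the same route as the paper's: both rest on the linearity of the identification of $\bar\Omega$ with $\C^4$ and on the observation that the spanning vector $(0,1,-1,-m)$ of the line in \eqref{equivalenceclasses} corresponds to $\pm(A-m)$ under that identification. You are in fact more careful than the paper's one-line proof about the sign (the vector maps to $m-A$ rather than $A-m$, harmlessly absorbed into the free parameter $\mu$); the detour through Lemma \ref{parametrizationlemma} is unnecessary but does no damage.
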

\begin{proof}
The identification of $\bar \Omega$ with $\C^4$ is a linear correspondence between vector spaces, and the vector in \eqref{equivalenceclasses} corresponds to the operator on the right hand side of \eqref{equivalentoperators} under this identification. 
\end{proof}
It is now clear precisely which of the operators in $\Omega \subset \bar \Omega$ that agree in the restriction to functions of the form in \eqref{homogeneousf} for a given $m\in \Z$, and how the operators in $\Omega$ are mapped to the corresponding ordinary differential operators in \eqref{Tgeneraliseddiffoperator} under the map $\Lambda_m$ in \eqref{LtoTmap}. 
\begin{prop}\label{quotientmapLtoT}
Let $m \in \N$ and let $A$ denote the angular derivative in \eqref{angularderivative}. Then $\Lambda_m$ is a bijective linear map from the quotient of $\bar \Omega$ over the one-dimensional subspace that is spanned by $A-m$ onto its image. 
\end{prop}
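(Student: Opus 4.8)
The plan is to recognise this proposition as nothing more than the first isomorphism theorem for vector spaces, so that the only genuine task is to identify the kernel of $\Lambda_m$ with the line generated by $A-m$. Since $\Lambda_m\colon\bar\Omega\to A_1$ is linear, it factors through the quotient $\bar\Omega/\ker\Lambda_m$ and descends to a well-defined linear map on this quotient that is injective, and whose image is exactly the image of $\Lambda_m$; hence it is a bijection onto its image. Thus it suffices to verify that $\ker\Lambda_m$ is precisely the one-dimensional subspace of $\bar\Omega$ spanned by the operator $A-m$, where $A$ is the angular derivative in \eqref{angularderivative}.

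To pin down the kernel I would transport the question to the coordinate picture afforded by the identification of $\bar\Omega$ with $\C^4$. Lemma \ref{parametrizationlemma} already shows that $\ker\Lambda_m$ is the one-dimensional subspace consisting of those differences $v-w$ with $v\sim w$ in the sense of \eqref{equivalenceclasses}, namely the line $\mu(0,1,-1,-m)$. It then remains to match this line with an intrinsic operator in $\bar\Omega$, and this is exactly the content of Lemma \ref{equivalentoperatorsets}: taking $w=0$ and $\mu=1$ there, the vector $(0,1,-1,-m)$ corresponds under the identification of $\bar\Omega$ with $\C^4$ to the operator $A-m$. Consequently the generator of $\ker\Lambda_m$ is $A-m$, so that $\ker\Lambda_m=\langle A-m\rangle$.

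With the kernel identified, the quotient by the relation \eqref{equivalenceclasses} appearing in Corollary \ref{classmap} coincides with the quotient $\bar\Omega/\langle A-m\rangle$, and the induced map is the asserted bijection onto its image; in effect the statement merely repackages Lemma \ref{parametrizationlemma}, Lemma \ref{equivalentoperatorsets} and Corollary \ref{classmap} in the intrinsic language of $\bar\Omega$ rather than in coordinates. I do not anticipate a real obstacle here, as every ingredient has been prepared in advance. The single point requiring care is the bookkeeping that confirms $A-m$ lands on the kernel-spanning vector under the chosen coordinate identification, that is, that the scalar $-m$ is correctly read as $-m$ times the identity operator rather than as a coefficient attached to one of the differential terms; once this is checked, bijectivity onto the image follows automatically.
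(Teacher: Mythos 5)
Your argument is correct and follows essentially the same route as the paper: the paper's proof likewise notes that the induced map on the quotient is well defined and linear, and then concludes by combining Corollary \ref{classmap} with Lemma \ref{equivalentoperatorsets}, which is exactly your identification of $\ker\Lambda_m$ with the line spanned by $A-m$. Your version just spells out the first-isomorphism-theorem bookkeeping more explicitly.
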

\begin{proof}
It is straightforward to check that the given map is well defined with linearity inherited from $\Lambda_m$. We may now conclude in view of Corollary \ref{classmap} and Lemma \ref{equivalentoperatorsets}. 
\end{proof}

We shall end this section with a few clarifications or implications in respect of the latter before returning to our main purpose of describing the solutions to the equation in \eqref{pqrdiffoporder0equation}.     

\begin{cor}\label{analyticcharacterizationequivalentops}
Let $m \in \N$. Let $M_v$ and $M_w$ be operators in $\bar \Omega$ corresponding to the vectors $v$ and $w$, respectively, under the identification of $\bar \Omega$ with $\C^4$. Then $v,w \in \C^4$ satisfy \eqref{equivalenceclasses} if and only if $M_vu=M_wu$ for every $u$ of the form in \eqref{homogeneousf}.
\end{cor}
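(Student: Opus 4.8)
The plan is to prove the two implications separately, reusing the reduction to ordinary operators from Proposition \ref{Mpqru} and the kernel description from Lemma \ref{parametrizationlemma}; in fact the statement is essentially an unpacking of those results together with Lemma \ref{equivalentoperatorsets}.

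For the direction \eqref{equivalenceclasses} $\Rightarrow$ equality, suppose $v \sim w$. By Lemma \ref{equivalentoperatorsets} this amounts to the operator identity $M_v - M_w = \mu A - \mu m$ for some $\mu \in \C$, with $A$ the angular derivative of \eqref{angularderivative}. Corollary \ref{angularderivativecorollary} tells us that $Au = mu$ whenever $u$ has the form \eqref{homogeneousf}, so that $(A-m)u = 0$ for every such $u$. Applying the displayed identity to any such $u$ then gives $(M_v - M_w)u = \mu(A-m)u = 0$, i.e. $M_v u = M_w u$, as required.

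For the converse I would route everything through the ordinary operators $T_v := \Lambda_m(v)$ and $T_w := \Lambda_m(w)$ produced by Proposition \ref{Mpqru}. Assuming $M_v u = M_w u$ for every $u$ of the form \eqref{homogeneousf}, equation \eqref{actionMu} reads $\mathcal{M}_\zeta^m(T_v f)\circ|\zeta|^2 = \mathcal{M}_\zeta^m(T_w f)\circ|\zeta|^2$ throughout $\D \setminus \{0\}$, for every $f \in C^2(0,1)$. Since $\mathcal{M}_\zeta^m = z^m$ does not vanish on $\D \setminus \{0\}$ and $|z|^2$ ranges over all of $(0,1)$ there, this forces $(T_v f)(x) = (T_w f)(x)$ for every $x \in (0,1)$ and every admissible $f$. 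Hence $T_v$ and $T_w$ coincide as elements of $A_1$, that is $\Lambda_m(v) = \Lambda_m(w)$, so $v - w \in \ker \Lambda_m$, and Lemma \ref{parametrizationlemma} delivers $v \sim w$.

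The only step that is not purely formal --- and the point I would watch most carefully --- is the passage from ``$T_v f = T_w f$ for all admissible $f$'' to the genuine operator identity $T_v = T_w$. By linearity of $\Lambda_m$ this amounts to showing that the operator $\Lambda_m(v-w)$, which by \eqref{Tgeneraliseddiffoperator} is again of second order with polynomial coefficients, must be the zero element of $A_1$ once it annihilates every $f \in C^2(0,1)$. This is settled by testing on the monomials $f(x) = 1, x, x^2$, which successively expose the zeroth-, first-, and second-order coefficients and force them to vanish; equivalently, a nonzero element of the Weyl algebra $A_1$ cannot annihilate every polynomial. With that in hand both implications close, and the corollary is seen to be the analytic translation of Lemma \ref{parametrizationlemma} through \eqref{actionMu}.
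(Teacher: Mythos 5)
Your proof is correct and follows essentially the same route as the paper, which simply cites Proposition \ref{quotientmapLtoT} (itself built from Lemma \ref{equivalentoperatorsets}, Corollary \ref{angularderivativecorollary}, and the kernel computation of Lemma \ref{parametrizationlemma}) and declares the corollary immediate. Your only addition is to make explicit the step the paper leaves tacit --- that $T_vf=T_wf$ for all $f\in C^2(0,1)$ forces $T_v=T_w$ in $A_1$, settled by testing on $f=1,x,x^2$ --- which is a worthwhile clarification but not a different argument.
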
 
\begin{proof}
The statement is immediate in view of Proposition \ref{quotientmapLtoT}.
\end{proof}
Our next result shows that the collection of functions of the form in \eqref{homogeneousf} is also the largest set on which two equivalent operators agree, when related in the sense of \eqref{equivalenceclasses}. We will here make use of Theorem 3.5 in \cite{OLipschitz}, which provides a converse to Corollary \ref{angularderivativecorollary}. It says that a continuously differentiable function on the punctured open unit disc that satisfies \eqref{angulardereq} is homogeneous of order $m$ with respect rotations.
\begin{prop}\label{equalityequivalentoperators}
Let $m \in \N$. Let $M_v$ and $M_w$ be operators in $\bar \Omega$ corresponding to the vectors $v$ and $w$, respectively, under the identification of $\bar \Omega$ with $\C^4$. Suppose moreover that $v,w \in \C^4$ are related as in \eqref{equivalenceclasses} and that their difference is non-zero. Let $u$ be a twice continuously differentiable function on the punctured unit disc $\D$. Then $u$ is of the form \eqref{homogeneousf} if and only if $M_v u=M_w u$.  
\end{prop}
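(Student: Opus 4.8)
The statement is an equivalence, so the plan is to treat the two implications separately, the forward one being essentially immediate and the converse carrying the real content. For the direction in which $u$ is assumed to be of the form \eqref{homogeneousf}, I would simply note that $v$ and $w$ are related as in \eqref{equivalenceclasses} by hypothesis, so Corollary \ref{analyticcharacterizationequivalentops} applies verbatim and gives $M_v u = M_w u$ for every such $u$. No further work is required here.

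For the converse, the key is to pass from $M_v u = M_w u$ to the single first-order condition $Au = mu$. By Lemma \ref{equivalentoperatorsets}, the hypothesis $v \sim w$ yields the operator identity $M_v - M_w = \mu A - \mu m = \mu(A - m)$, and since this is an identity in $\bar\Omega$ it holds on all twice continuously differentiable functions; moreover the assumption $v - w \neq 0$ forces $\mu \neq 0$. Rewriting $M_v u = M_w u$ as $(M_v - M_w)u = 0$ and cancelling $\mu$ then gives exactly $(A - m)u = 0$, that is $Au = mu$ on the punctured disc. At this point I would invoke the cited converse to Corollary \ref{angularderivativecorollary}, namely Theorem 3.5 of \cite{OLipschitz}: a continuously differentiable function on the punctured unit disc satisfying $Au = mu$ is homogeneous of order $m$ with respect to rotations, i.e. satisfies \eqref{homogeneity}. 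To convert this homogeneity into the explicit product form, I would set $g(z) = u(z)/z^m$ on $\D \setminus \{0\}$; the relation $u(e^{i\theta}z) = e^{im\theta}u(z)$ shows that $g$ is invariant under all rotations, hence a function of $|z|$ alone, so writing $f(|z|^2) = g(z)$ produces $u(z) = z^m f(|z|^2)$. The $C^2$ regularity of $u$ away from the origin, together with the smoothness and non-vanishing of $z^m$ there, transfers to $f$ on $(0,1)$, so $u$ is of the form \eqref{homogeneousf}.

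I expect the only genuine obstacle to lie in the converse, and within it the appeal to Theorem 3.5 of \cite{OLipschitz}: the reduction to $Au = mu$ is a purely algebraic consequence of Lemma \ref{equivalentoperatorsets} once $\mu \neq 0$ is recorded, and the passage from homogeneity to the product form is routine. The substantive input is thus the rigidity statement that $Au = mu$ by itself, with no a priori separation-of-variables assumption, already forces homogeneity of order $m$; everything else is bookkeeping around the operator identity and the regularity transfer.
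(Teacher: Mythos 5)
Your proof is correct and follows essentially the same route as the paper: both directions hinge on the operator identity $M_v-M_w=\mu(A-m)$ with $\mu\neq 0$ from Lemma \ref{equivalentoperatorsets}, with the forward implication reduced to $Au=mu$ for functions of the form \eqref{homogeneousf} and the converse resting on Theorem 3.5 of \cite{OLipschitz}. The only (harmless) differences are that you cite Corollary \ref{analyticcharacterizationequivalentops} rather than Corollary \ref{angularderivativecorollary} for the easy direction, and that you spell out the routine passage from rotational homogeneity to the product form $u(z)=z^mf(|z|^2)$, which the paper leaves implicit.
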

\begin{proof}
From Lemma \ref{equivalentoperatorsets}, we have that 
\begin{equation}\label{equalityLoperatorangularderivative}
M_v u-M_w u=\mu Au-\mu m u,
\end{equation}
for some $\mu \in \C \setminus \{0\}$. An application of Corollary \ref{angularderivativecorollary} shows that $u$ is of the form in \eqref{homogeneousf} only if the right side vanishes identically in $\D \setminus \{0\}$. The converse to this statement follows from the remarks preceding this statement, applied to \eqref{equalityLoperatorangularderivative}.  
\end{proof}

\section{The Kummer and Bessel equations}
 
Let $m\in \N$. The equation $T_{s,t,r,m}y=0$ that arises from the second order differential operator in \eqref{Tgeneraliseddiffoperator} can then be written as
\begin{equation}\label{kummerequation}
xy''(x)+[m+1-(s+t)x]y'(x)-(r+sm)y(x)=0,
\end{equation}
for complex numbers $s,t,r \in \C$. For $s+t \neq 0$, this equation is known as Kummer's equation, or the confluent Hypergeometric equation, usually written
\begin{equation}\label{kummergeneralequation}
xy''(x)+(b-x)y'(x)-ay(x)=0,
\end{equation}
for complex numbers $a,b \in \C$. Under certain restrictions on the parameters, it can be shown that a solution to the latter exists in the form of a complex power series, known as the confluent hypergeometric function,
\begin{equation}\label{kummergeneralfunction}
\Phi(a,b,z)=\sum_{k=0}^{\infty}\frac{(a)_k}{(b)_k}\frac{z^k}{k!}, \quad z \in \C,
\end{equation}
for $a,b \in \C$ such that $b \in \C  \setminus \{0,-1,-2,\ldots\}$. Another notation for the entire function in \eqref{kummergeneralfunction} is $_1F_1(a,b,\cdot)$, that also addresses it as a member of the greater family of hypergeometric functions $_{p}F_q(a_1, \ldots,a_p:b_1, \ldots,b_q : \cdot)$. 

When $s+t=0$, we can write \eqref{kummerequation} in the form of
\begin{equation}\label{besselequation}
xy''(x)+(m+1)y'(x)-(r+sm) y(x)=0,
\end{equation}
for complex numbers $s,r \in \C$. This last equation is closely connected with Bessel's (modified) equation 
\begin{equation}\label{besselsequation}
x^2y''(x)+xy'(x)-(x^2+n^2)y(x)=0.
\end{equation} 
A solution to this last equation can be given in the form of
\begin{equation}\label{besselfunctions}
I_{n}(z)=\frac{z^n}{2^n}\sum_{k=0}^{\infty}\frac{1}{k! \ \Gamma(n+1+k)}\frac{z^{2k}}{2^{2k}},\quad z \in \C,
\end{equation}
which is commonly referred to as the modified Bessel function.

In the subsequent two sections, and to emphasize their particular ties with the special functions just mentioned, we shall divide between the two cases $s+t \neq 0$ and $s+t =0$. The two cases will then be considered jointly, as we suggest a more transparent approach. For more on the confluent hypergeometric equation and Bessel's equation we can refer to \cite{AAR}, chapter 4. 

\section{The case of Kummer}

We return to the equation in \eqref{kummerequation} and consider the case when $s,t,r \in \C$ are such that $s+t \neq 0$. With the use of an integrating factor, we can rewrite Kummer's confluent hypergeometric equation in \eqref{kummergeneralequation} on the Sturm-Liouville form 
\begin{equation}\label{kummergeneralsturmliouville}
\frac{d}{dx}(e^{-x}x^{b}y'(x))-ae^{-x}x^{b-1}y(x)=0.
\end{equation}  
Correspondingly, we have the following form for \eqref{kummerequation}.
\begin{lemma}\label{kummersturmliouvillelemma}
Let $m \in \N$. Then the ordinary equation in \eqref{kummerequation} can be written in the form of
\begin{equation}\label{kummersturmliouville}
\frac{d}{dx}\bigg(e^{-(s+t)x}x^{m+1}y'(x)\bigg)-(r+sm)e^{-(s+t)x} x^my(x)=0, 
\end{equation} 
for $s,t,r \in \C$.
\hfill $\square$
\end{lemma}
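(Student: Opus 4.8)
The plan is to produce \eqref{kummersturmliouville} from \eqref{kummerequation} by the standard Sturm--Liouville integrating factor, and then to confirm the identity by a direct expansion of the total derivative. First I would write \eqref{kummerequation} in the generic form $a(x)y''+b(x)y'+c(x)y=0$ with $a(x)=x$, $b(x)=m+1-(s+t)x$ and $c(x)=-(r+sm)$, and seek a factor $\mu$ for which the first two terms of $\mu a\,y''+\mu b\,y'+\mu c\,y$ collapse into $(\mu a\,y')'$. This requires $(\mu a)'=\mu b$, that is $\mu'/\mu=(b-a')/a=m/x-(s+t)$. Integrating gives $\mu(x)=x^{m}e^{-(s+t)x}$, so that $\mu a=e^{-(s+t)x}x^{m+1}$ is precisely the coefficient appearing inside the derivative in \eqref{kummersturmliouville}.

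With $\mu$ in hand the verification is a one-line computation. Expanding $\frac{d}{dx}\bigl(e^{-(s+t)x}x^{m+1}y'(x)\bigr)$ by the product rule yields three terms, and factoring out $e^{-(s+t)x}x^{m}$ recovers $e^{-(s+t)x}x^{m}\bigl(xy''+[m+1-(s+t)x]y'\bigr)$; subtracting the $y$-term $(r+sm)e^{-(s+t)x}x^{m}y$ then exhibits \eqref{kummersturmliouville} as $e^{-(s+t)x}x^{m}$ times the left-hand side of \eqref{kummerequation}. Since the scalar factor $e^{-(s+t)x}x^{m}$ is nonzero for $x>0$, the two equations are equivalent there, which is all that is needed, as the relevant functions $f$ in \eqref{homogeneousf} are defined on $(0,1)$.

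I expect no genuine obstacle: the entire content is the correct choice of integrating factor, and that choice is forced by the self-adjointness condition $(\mu a)'=\mu b$ above. The only point to keep an eye on is that \eqref{kummersturmliouville} is the honest generalisation of the classical form \eqref{kummergeneralsturmliouville}, with $(b-x)$ replaced by $(m+1-(s+t)x)$ and the exponential $e^{-x}$ correspondingly replaced by $e^{-(s+t)x}$. Alternatively one could deduce \eqref{kummersturmliouville} from \eqref{kummergeneralsturmliouville} by the rescaling $x\mapsto(s+t)x$, which maps \eqref{kummerequation} to Kummer's equation with parameters $b=m+1$ and $a=(r+sm)/(s+t)$; but the direct integrating-factor computation is cleaner and avoids carrying the hypothesis $s+t\neq0$ through a change of variables.
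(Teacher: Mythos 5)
Your argument is correct: the integrating factor $\mu(x)=x^{m}e^{-(s+t)x}$ is the right one, and expanding the product rule indeed recovers $e^{-(s+t)x}x^{m}$ times the left-hand side of \eqref{kummerequation}, which is nonvanishing for $x>0$. The paper omits the proof entirely (the lemma is stated with a $\square$), but its surrounding discussion indicates exactly this integrating-factor derivation, so your approach matches the intended one.
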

In relation to \eqref{kummersturmliouville}, we shall consider the confluent hypergeometric function 
\begin{equation}
\label{kummerfunction}
\Phi\bigg(\frac{r+sm}{s+t},m+1,(s+t)z\bigg)=\sum_{k=0}^{\infty}\bigg(\frac{r+sm}{s+t}\bigg)_k\frac{(s+t)^k}{(m+1)_k}\frac{z^k}{k!}, \quad z \in \C,
\end{equation}
for $m \in \N$ and $s,t,r \in \C$ such that $s+t \neq 0$. The restriction relating to the sum of the two parameters $s,t \in \C$ is a convenience rather than a necessity that allows us to express the functions satisfying \eqref{kummerequation} in terms of \eqref{kummergeneralfunction}. Indeed, we have the cancellation of factors 
\begin{equation*}
(s+t)^n\bigg(\frac{r+sm}{s+t}\bigg)_n=(r+sm)(r+sm+s+t) \ldots(r+sm+(n-1)(s+t)),
\end{equation*}
for $m,n \in \N$ and $s,t,r \in \C$ such that $s+t \neq 0$. Keeping the above in mind, there is no need to impose any added condition on the sum of the two parameters, and we shall return to a more general approach following a discussion in more familiar terms. 
\begin{prop}\label{confluentsolution}
Let $m \in \N$ and let $s,t,r \in \C$ be such that $s+t \neq 0$. Then the confluent hypergeometric function in \eqref{kummerfunction} satisfies \eqref{kummerequation} on the complex domain $\C$. 
\end{prop}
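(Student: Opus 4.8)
The goal is to show that the confluent hypergeometric function in \eqref{kummerfunction} solves the ordinary differential equation \eqref{kummerequation}. The plan is to reduce this to the classical fact that $\Phi(a,b,z)$ solves Kummer's equation \eqref{kummergeneralequation} via a change of variable, and then verify that the particular parameter substitution $a=(r+sm)/(s+t)$, $b=m+1$ together with the rescaling $z \mapsto (s+t)z$ converts \eqref{kummergeneralequation} into precisely \eqref{kummerequation}.

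First I would establish (or cite, as it is standard and referenced via \cite{AAR}) that the entire function $\Phi(a,b,\cdot)$ defined by the series \eqref{kummergeneralfunction} satisfies Kummer's equation \eqref{kummergeneralequation} for $b \in \C \setminus \{0,-1,-2,\ldots\}$. Here $b=m+1$ with $m \in \N$, so this parameter restriction is automatically met. If one prefers a self-contained verification, this classical fact is checked by substituting the power series into \eqref{kummergeneralequation} and equating coefficients: the recurrence $(k+1)(b+k)c_{k+1}=(a+k)c_k$ is exactly satisfied by $c_k=(a)_k/((b)_k\,k!)$, which follows from the defining identities $(a)_{k+1}=(a+k)(a)_k$ and $(b)_{k+1}=(b+k)(b)_k$.

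Next I would introduce $w(x)=\Phi(a,b,cx)$ with $c=s+t$ and compute how the Kummer operator transforms under the scaling $x \mapsto cx$. Writing $\phi(\xi)=\Phi(a,b,\xi)$ so that $w(x)=\phi(cx)$, we have $w'(x)=c\phi'(cx)$ and $w''(x)=c^2\phi''(cx)$. Substituting $\xi=cx$ into the identity $\xi\phi''(\xi)+(b-\xi)\phi'(\xi)-a\phi(\xi)=0$ and multiplying through by $c$ yields, after collecting terms,
\begin{equation*}
xw''(x)+\left[b-cx\right]w'(x)-ac\,w(x)=0.
\end{equation*}
With $b=m+1$, $c=s+t$, and $ac=\tfrac{r+sm}{s+t}(s+t)=r+sm$, this is exactly \eqref{kummerequation}.

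The only genuine point requiring care — rather than a true obstacle — is the role of the hypothesis $s+t\neq 0$, which guarantees that the scaling factor $c=s+t$ is nonzero so the substitution $\xi=cx$ is a legitimate (invertible) change of variable and the quotient $a=(r+sm)/(s+t)$ is well defined; the degenerate case $s+t=0$ is precisely the Bessel regime treated separately. I would also note that the cancellation of factors displayed just before the proposition shows directly that the coefficients of the series \eqref{kummerfunction} are independent of how $s+t$ is distributed between $s$ and $t$, so convergence on all of $\C$ is inherited from that of $\Phi$, and the claimed equality holds on the entire complex domain.
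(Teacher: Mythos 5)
Your proof is correct. Both you and the paper reduce the claim to the classical fact that $\Phi(a,b,\cdot)$ solves Kummer's equation \eqref{kummergeneralequation}, and both arrive at the same parameter identification $a=(r+sm)/(s+t)$, $b=m+1$ together with the rescaling $x\mapsto(s+t)x$. The difference is in how the reduction is carried out: the paper invokes Lemma \ref{kummersturmliouvillelemma} to put \eqref{kummerequation} in Sturm--Liouville form \eqref{kummersturmliouville} and then matches it against the Sturm--Liouville form \eqref{kummergeneralsturmliouville} of the general equation, whereas you work directly with the raw ODE and an explicit chain-rule computation for $w(x)=\Phi(a,b,cx)$, $c=s+t$. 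Your route is slightly more elementary and self-contained (you also sketch the coefficient recurrence verifying that $\Phi$ solves \eqref{kummergeneralequation}, which the paper leaves to the standard references), while the paper's Sturm--Liouville formulation is reused elsewhere --- notably in the Wronskian argument of Proposition \ref{Wronskianprop} and in the treatment of the Bessel case via \eqref{derivativeidentitybessel} --- so the lemma is not wasted effort there. Your remarks on the role of $s+t\neq 0$ and on the cancellation of factors in \eqref{kummerfunction} are accurate and consistent with the discussion preceding the proposition.
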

\begin{proof}
By Lemma \ref{kummersturmliouvillelemma} we can write \eqref{kummerequation} on the form \eqref{kummersturmliouville}. Comparing this equation with that given in \eqref{kummergeneralsturmliouville}, we identify $a = (r+sm)/(s+t)$ and $b = m+1$ following the substitution of variables inferred from \eqref{kummerfunction}.   
\end{proof}

The following result gives our first basic construction of solutions to the equation in \eqref{pqrdiffoporder0equation}.

\begin{prop}\label{pqrharmonicfunctionsorderzerokummerthm}
Let $m \in \N$ and let $s,t,r \in \C$ be such that $s+t \neq 0$. Then
\begin{equation}\label{pqrharmonicfunctionsorderzerokummer}
u_m(z)=\Phi\big((r+sm)/(s+t),m+1,(s+t)|z|^2\big)z^m, \quad z \in \C,
\end{equation}
satisfies \eqref{pqrdiffoporder0equation}, where $\Phi(a,b,\cdot)$ is the confluent hypergeometric function in \eqref{kummergeneralfunction}. 
\end{prop}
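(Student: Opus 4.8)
The plan is to observe that $u_m$ is precisely of the homogeneous form \eqref{homogeneousf}, whose radial profile is the confluent hypergeometric function, and then to combine the two facts already available: that this profile solves the reduced ordinary equation, and that the reduction identity \eqref{actionMu} of Proposition \ref{Mpqru} promotes a solution of the ordinary equation to a solution of $M_{s,t,r}u=0$.

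Concretely, I would set $f(x)=\Phi\big((r+sm)/(s+t),m+1,(s+t)x\big)$, so that $u_m(z)=f(|z|^2)z^m$ is of the form \eqref{homogeneousf}. Since $m\in\N$ the lower parameter $m+1$ is a positive integer, so $\Phi$ is entire in its last slot and $u_m\in C^\infty(\C)$. By Proposition \ref{confluentsolution} this $f$ satisfies Kummer's equation \eqref{kummerequation}, i.e.\ $T_{s,t,r;m}f\equiv0$ on $\C$. Applying Proposition \ref{Mpqru} then yields $M_{s,t,r}u_m=\mathcal{M}_\zeta^m\,(T_{s,t,r;m}f)\circ|\zeta|^2=0$; and although that proposition (through Lemmas \ref{differentiationlemmafirstorder} and \ref{differentiationlemmasecondorder}) is phrased on the punctured disc, the differentiations are local, so the conclusion $M_{s,t,r}u_m=0$ in fact holds on all of $\C\setminus\{0\}$.

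The only point not covered by the reduction is the origin, where \eqref{homogeneousf} degenerates, and this is where a little care is needed rather than pure bookkeeping. I would dispose of it by continuity: as $u_m\in C^2(\C)$, the function $M_{s,t,r}u_m$ is continuous on $\C$, and since it vanishes on the dense set $\C\setminus\{0\}$ it must vanish at $z=0$ as well. Hence $M_{s,t,r}u_m=0$ throughout $\C$, which is the asserted equation \eqref{pqrdiffoporder0equation}.
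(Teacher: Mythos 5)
Your proposal is correct and follows essentially the same route as the paper: reduce via Proposition \ref{Mpqru} to the ordinary equation \eqref{kummerequation} and invoke Proposition \ref{confluentsolution}. The only difference is that you explicitly handle the origin by a continuity argument, a point the paper's proof passes over silently; this is a welcome bit of extra care rather than a change of method.
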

\begin{proof}
An application of Proposition \ref{Mpqru} gives
\begin{equation*}
M_{s,t,r}u_m(z)=z^mT_{s,t,r,m}\Phi\big((r+sm)/(s+t),m+1,(s+t)|z|^2\big), \quad z \in \C.
\end{equation*}
The statement now follows from Proposition \ref{confluentsolution}.
\end{proof}
Recall the equivalence relation on the collection of operators in $\Omega$ imposed by \eqref{equivalenceclasses}. The next result shows that any function satisfying \eqref{pqrdiffoporder0equation} of the form in \eqref{pqrharmonicfunctionsorderzerokummer} is also harmonic in relation to any operator equivalent to $M_{s,t,r}$. 
\begin{cor}\label{equivalentoperatorsonpqrharmonicfunctionskummer}
Let $m \in \N$ and let $s,t,r \in \C$ be such that $s+t \neq 0$. Let $u_m$ denote the function appearing in \eqref{pqrharmonicfunctionsorderzerokummer}. Then 
\begin{equation*}
M_{s',t',r'}u_m=0, \quad \ \textnormal{in} \ \C,
\end{equation*}
for every choice of complex numbers $s',t',r' \in \C$ satisfying 
\begin{equation}\label{vectorrelationequivalentoperatorskummer}
(s',t',r')=(s+\mu,t-\mu,r-\mu m),
\end{equation}
for some $\mu \in \C$. 
\end{cor}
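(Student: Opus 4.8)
The plan is to recognise the relation \eqref{vectorrelationequivalentoperatorskummer} as nothing but the equivalence \eqref{equivalenceclasses}, and then to transport the already-established identity $M_{s,t,r}u_m=0$ across the equivalence class rather than recomputing anything. Under the identification of $\bar \Omega$ with $\C^4$, the operator $M_{s',t',r'}$ with $(s',t',r')=(s+\mu,t-\mu,r-\mu m)$ corresponds to a vector $v'$ whose difference with the vector $v$ of $M_{s,t,r}$ is precisely $\mu(0,1,-1,-m)$: the $\partial\bar\partial$-coefficient is unchanged, while the remaining three coordinates are shifted by $(\mu,-\mu,-\mu m)$. Hence $v'\sim v$ in the sense of \eqref{equivalenceclasses}, with the same parameter $\mu$.

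First I would invoke Lemma \ref{equivalentoperatorsets} to turn this equivalence into the clean operator identity
\begin{equation*}
M_{s',t',r'}-M_{s,t,r}=\mu A-\mu m,
\end{equation*}
where $A$ is the angular derivative in \eqref{angularderivative}; this is an identity of differential operators, so it holds on all of $\C$. Applying it to $u_m$ reduces the claim to showing that $(A-m)u_m=0$ on $\C$, after which I would combine this with Proposition \ref{pqrharmonicfunctionsorderzerokummerthm}, which gives $M_{s,t,r}u_m=0$ on $\C$, to conclude
\begin{equation*}
M_{s',t',r'}u_m=M_{s,t,r}u_m+\mu(A-m)u_m=0.
\end{equation*}

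Establishing $Au_m=mu_m$ is where the single point of care lies. The function $u_m$ in \eqref{pqrharmonicfunctionsorderzerokummer} is exactly of the form \eqref{homogeneousf} with smooth radial profile $f=\Phi\big((r+sm)/(s+t),m+1,(s+t)\,\cdot\,\big)$, so the computation underlying Corollary \ref{angularderivativecorollary} yields $Au_m=mu_m$ on the punctured plane $\C\setminus\{0\}$. The remaining obstacle is the origin: the supporting results are all phrased on a punctured domain, whereas the corollary asserts harmonicity on the whole of $\C$. I would clear this by a continuity argument. Since the confluent hypergeometric function is entire and both $|z|^2$ and $z^m$ are smooth, $u_m\in C^\infty(\C)$; consequently $Au_m$ and $mu_m$ are continuous on $\C$ and agree on $\C\setminus\{0\}$, hence they agree at $0$ as well. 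This gives $(A-m)u_m=0$ throughout $\C$ and completes the proof. The only genuinely delicate step is thus the passage across the origin, everything else being a direct application of the identification machinery of the preliminaries together with Proposition \ref{pqrharmonicfunctionsorderzerokummerthm}.
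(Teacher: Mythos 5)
Your proof is correct and takes essentially the same route as the paper, whose one-line proof simply cites Corollary \ref{analyticcharacterizationequivalentops} (that operators equivalent under \eqref{equivalenceclasses} agree on all functions of the form \eqref{homogeneousf}), a statement that is itself built on the very identity $M_{s',t',r'}-M_{s,t,r}=\mu(A-m)$ from Lemma \ref{equivalentoperatorsets} which you unwind explicitly. The only place you go beyond the paper is the continuity argument extending $(A-m)u_m=0$ across the origin, a detail the paper's proof leaves implicit.
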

\begin{proof}
The result follows directly from Corollary \ref{analyticcharacterizationequivalentops}.
\end{proof}
\begin{cor}\label{pqrharmonicfunctionsorderzerokummerconjugatethm}
Let $m \in \Z_{-}$ and let $s,t,r \in \C$ be such that $s+t \neq 0$. Then
\begin{equation}\label{pqrharmonicfunctionsorderzerokummerconjugate}
u_m(z)=\Phi\big((r+t|m|)/(t+s),|m|+1,(t+s)|z|^2\big)\bar{z}^{|m|}, \quad z \in \C,
\end{equation}
satisfies \eqref{pqrdiffoporder0equation}, where $\Phi(a,b,\cdot)$ denotes the confluent hypergeometric function in \eqref{kummergeneralfunction}.
\end{cor}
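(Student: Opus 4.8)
The plan is to mirror the proof of Proposition \ref{pqrharmonicfunctionsorderzerokummerthm}, but with the conjugate action Corollary \ref{Mpqruconj} playing the role that Proposition \ref{Mpqru} played there. Writing $n=|m|\in\N$, the function in \eqref{pqrharmonicfunctionsorderzerokummerconjugate} is of the conjugated homogeneous form $g(|z|^2)\bar z^n$ with radial part $g(x)=\Phi((r+tn)/(t+s),n+1,(t+s)x)$. The reason for swapping $s$ and $t$ and replacing $m$ by $|m|$ in the definition of $u_m$ is exactly that the conjugate action Corollary \ref{Mpqruconj} sends such a function to $\mathcal{M}_{\bar\zeta}^n(T_{t,s,r;n}g)\circ|\zeta|^2$, that is, to the swapped ordinary operator $T_{t,s,r;n}$ applied to $g$.

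First I would record that $g$ is precisely the confluent hypergeometric solution of the swapped Kummer equation. Applying Proposition \ref{confluentsolution} with the two parameters $s$ and $t$ interchanged — which is legitimate since the hypothesis $s+t\neq0$ is symmetric in $s$ and $t$ — shows that $g$ solves $T_{t,s,r;n}g=0$. Substituting this into the expression furnished by Corollary \ref{Mpqruconj} makes the right-hand side vanish identically on $\D\setminus\{0\}$, and since $u_m$ is smooth and $M_{s,t,r}u_m$ is continuous, the identity extends across the origin, giving $M_{s,t,r}u_m=0$ on $\C$, which is the assertion.

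The step to watch is the conjugation bookkeeping built into Corollary \ref{Mpqruconj}: one must track that complex conjugation both swaps $s\leftrightarrow t$ inside the ordinary operator and conjugates the radial factor, so that the confluent hypergeometric function appearing in \eqref{pqrharmonicfunctionsorderzerokummerconjugate} is the one solving the $T_{t,s,r;n}$ equation and not its conjugate. A robust way to keep this transparent is to argue instead by conjugation of an already established solution: by Proposition \ref{pqrharmonicfunctionsorderzerokummerthm} applied to the parameters $(\bar t,\bar s,\bar r)$ (again using $\overline{s+t}\neq0$), the function $w(z)=\Phi((\bar r+\bar t n)/(\bar t+\bar s),n+1,(\bar t+\bar s)|z|^2)z^n$ satisfies $M_{\bar t,\bar s,\bar r}w=0$; since $w=\overline{u_m}$ and $M_{s,t,r}\bar w=\overline{M_{\bar t,\bar s,\bar r}w}$ by the conjugation identity established in the proof of Corollary \ref{Mpqruconj}, one concludes $M_{s,t,r}u_m=0$ directly. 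Either route reduces the claim to the single-variable solvability already in hand, and the only genuine obstacle is getting the conjugate-and-swap correspondence exactly right.
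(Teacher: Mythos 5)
Your proposal is correct, and its second route --- conjugating the solution furnished by Proposition \ref{pqrharmonicfunctionsorderzerokummerthm} for the parameters $(\bar t,\bar s,\bar r)$ and using $M_{s,t,r}\bar w=\overline{M_{\bar t,\bar s,\bar r}w}$ --- is precisely the paper's own proof. The first route, through Corollary \ref{Mpqruconj} and the swapped ordinary operator $T_{t,s,r;|m|}$, is just an equivalent unwinding of the same conjugation identity, and your bookkeeping of the conjugate-and-swap correspondence is right.
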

\begin{proof}
The function
\begin{equation*}
v(z)=\overline{u_m(z)}=\Phi\big((\bar r+\bar t|m|)/(\bar t+ \bar s),|m|+1,(\bar t+ \bar s)|z|^2\big)z^{|m|}, \quad z \in \C,
\end{equation*}
satisfies $M_{\bar t, \bar s, \bar r}v=0$ by Proposition \ref{pqrharmonicfunctionsorderzerokummerthm}, and $M_{s,t,r}$ is the conjugate of $M_{\bar t, \bar s, \bar r}$.
\end{proof}
A few technical results will be needed in order to fully describe the homogeneous components of a function satisfying \eqref{pqrdiffoporder0equation}, which we include here for the sake of completeness. Similar results may otherwise be found in \cite{O14}.   
\begin{prop}\label{Wronskianprop}
Let $m \in \N$ and let $s,t,r \in \C$. Let $y_1$ and $y_2$ be two solutions of the equation in \eqref{kummerequation}. Then their Wronskian has the form 
\begin{equation}\label{Wronskian}
W(y_1,y_2)(x)=\begin{vmatrix}
    y_1(x) & y_2(x) \\
    y'_1(x) & y'_2(x)
\end{vmatrix}=Ae^{(s+t)x}x^{-(m+1)},
\end{equation}
for some constant $A$.  
\end{prop}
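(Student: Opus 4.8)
The plan is to invoke Abel's identity for the second order linear ordinary equation in \eqref{kummerequation}. First I would place that equation in standard form by dividing through by $x$, which is permissible away from the singular point at the origin, yielding
\[
y''(x)+\frac{m+1-(s+t)x}{x}\,y'(x)-\frac{r+sm}{x}\,y(x)=0,
\]
valid on any interval with $x\neq 0$. The coefficient of $y'$ here is $P(x)=(m+1)/x-(s+t)$, and the stated formula will follow once we know that $W=W(y_1,y_2)$ satisfies the first order linear equation $W'=-P\,W$.

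To establish this, I would differentiate $W=y_1y_2'-y_2y_1'$ directly. The product-rule cross terms cancel, leaving $W'=y_1y_2''-y_2y_1''$. Multiplying through by $x$ and substituting the expression for $xy_i''$ read off from \eqref{kummerequation}, namely $xy_i''=-[m+1-(s+t)x]y_i'+(r+sm)y_i$ for $i=1,2$, the zeroth order contributions $(r+sm)y_1y_2$ cancel against one another, and I am left with
\[
xW'=-[m+1-(s+t)x]\,(y_1y_2'-y_2y_1')=-[m+1-(s+t)x]\,W.
\]
After dividing by $x$ this is precisely $W'=-P\,W$.

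It then remains to integrate this separable equation. Writing $W'/W=-(m+1)/x+(s+t)$ and integrating gives $\log W=-(m+1)\log x+(s+t)x+\text{const}$, so that $W(x)=Ae^{(s+t)x}x^{-(m+1)}$ for a constant $A$, as claimed; note that no hypothesis on $s+t$ is needed, consistent with the statement. I do not anticipate a genuine obstacle, since this is the standard derivation of Abel's formula. The only point requiring a little care is the singularity at $x=0$, which is why the normalization and the logarithmic integration are performed on intervals avoiding the origin. If one prefers to avoid choosing a branch of the logarithm, one may instead verify by direct differentiation that $Ae^{(s+t)x}x^{-(m+1)}$ solves $xW'=-[m+1-(s+t)x]W$ and then appeal to uniqueness for this first order linear equation to conclude that every Wronskian must take this form.
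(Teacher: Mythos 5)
Your proposal is correct and follows the same route as the paper: the paper's proof consists precisely of noting that the Wronskian satisfies $xW'(x)+(m+1-(s+t)x)W(x)=0$ and integrating, which is exactly Abel's identity as you derive it. Your version simply spells out the cancellation and the integration step that the paper leaves implicit.
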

\begin{proof}
We simply note that    
\begin{equation*}
0=xW'(x)+(m+1-(s+t)x)W(x),
\end{equation*}   
where $W(x)=W(y_1,y_2)(x)$ is the Wronskian of $y_1$ and $y_2$. 
\end{proof}

\begin{lemma}\label{growthconditionlemma}
Let $m \in \N$ and let $s,t,r \in \C$. Let $y_2 \in C^2(0,\infty)$ be a solution of the equation in \eqref{kummerequation} and assume that 
\begin{equation}\label{growthassumption}(m+1)y_2'(x)-(r+sm) y_2(x)=o(1/x^{m+1}), \quad x \to 0+. 
\end{equation}
If $y_1$ is another solution to \eqref{kummerequation} that is smooth on the non-negative real axis, then the Wronskian of $y_1$ and $y_2$ in \eqref{Wronskian} vanishes identically on the positive real axis. 
\end{lemma}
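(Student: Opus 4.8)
The plan is to reduce everything to the explicit form of the Wronskian already supplied by Proposition \ref{Wronskianprop}. That result gives $W(y_1,y_2)(x)=Ae^{(s+t)x}x^{-(m+1)}$ for some constant $A$, and since $e^{(s+t)x}\to 1$ as $x\to 0+$, it suffices to prove that $x^{m+1}W(x)\to 0$ as $x\to 0+$: this forces $A=0$ and hence $W\equiv 0$ throughout $(0,\infty)$. So the whole argument is a single boundary estimate at the origin.

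The key algebraic step is to rewrite the Wronskian so that the two hypotheses enter directly. Setting
\[
g(x)=(m+1)y_2'(x)-(r+sm)y_2(x),\qquad h(x)=(m+1)y_1'(x)-(r+sm)y_1(x),
\]
and adding and subtracting $(r+sm)y_1y_2$ inside $(m+1)W=(m+1)(y_1y_2'-y_1'y_2)$, I would obtain the identity
\[
(m+1)W(x)=y_1(x)g(x)-y_2(x)h(x),
\]
so that
\[
(m+1)\,x^{m+1}W(x)=y_1(x)\bigl(x^{m+1}g(x)\bigr)-\bigl(x^{m+1}y_2(x)\bigr)h(x).
\]
Here $g$ is exactly the quantity controlled by the growth assumption \eqref{growthassumption}, namely $g(x)=o(x^{-(m+1)})$, while $h$ will be handled through the smoothness of $y_1$.

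For the first term the hypothesis gives $x^{m+1}g(x)=o(1)$, and $y_1$ is continuous at $0$ and hence bounded near $0$, so $y_1(x)\,x^{m+1}g(x)\to 0$; note that it is precisely the \emph{little-}$o$ in \eqref{growthassumption} that is needed here. For the factor $h$ in the second term, I would evaluate the equation \eqref{kummerequation} for the smooth solution $y_1$ at $x=0$: the leading term $xy_1''$ drops out and what remains is $(m+1)y_1'(0)-(r+sm)y_1(0)=0$, i.e.\ $h(0)=0$; since $y_1$ is smooth, $h$ is smooth and hence $h(x)=O(x)$ as $x\to 0+$. It therefore remains to control $x^{m+1}y_2(x)$, the second term being $O\bigl(x^{m+2}y_2(x)\bigr)$.

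The main work, and the step I expect to be the real obstacle, is to extract a growth bound on $y_2$ near the origin from the one-sided condition \eqref{growthassumption} alone, with no a priori regularity of $y_2$ at $0$. I would treat $g(x)=(m+1)y_2'(x)-(r+sm)y_2(x)$ as a first-order linear relation for $y_2$, multiply by the integrating factor $e^{-\lambda x}$ with $\lambda=(r+sm)/(m+1)$, and integrate over $[x,x_0]$ for a fixed $x_0>0$, which expresses $y_2(x)$ as a bounded term plus a multiple of $\int_x^{x_0}e^{-\lambda u}g(u)\,du$. Using $|g(u)|\le \varepsilon(u)\,u^{-(m+1)}$ with $\varepsilon(u)\to 0$, a split of the integral at a small threshold shows that $x^{m+2}\int_x^{x_0}e^{-\lambda u}g(u)\,du\to 0$; the extra factor $x^{m+2}$ kills both the $x^{-m}$ growth of the integral when $m\ge 1$ and the logarithmic growth in the borderline case $m=0$. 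This yields $x^{m+2}y_2(x)\to 0$, so $\bigl(x^{m+1}y_2(x)\bigr)h(x)=O\bigl(x^{m+2}y_2(x)\bigr)\to 0$. Combining the two estimates gives $x^{m+1}W(x)\to 0$, whence $A=0$ and $W(y_1,y_2)\equiv 0$ on the positive real axis, as claimed.
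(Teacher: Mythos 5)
Your proposal is correct and follows essentially the same route as the paper: the same decomposition $W=y_1\,(y_2'-\alpha y_2)-y_2\,(y_1'-\alpha y_1)$ (up to the harmless factor $m+1$), the same integrating-factor estimate turning \eqref{growthassumption} into a growth bound on $y_2$ near the origin, and the same comparison with the explicit Wronskian of Proposition \ref{Wronskianprop} to force $A=0$. Your additional observation that $h(0)=0$ (hence $h(x)=O(x)$) is valid but not needed, since the cruder bound $y_2(x)=o(1/x^{m})$ for $m>0$ (resp.\ $o(\log(1/x))$ for $m=0$) already makes the second term $o(1/x^{m+1})$ with $h$ merely bounded.
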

\begin{proof}
By choosing an appropriate integrating factor for \eqref{growthassumption} and taking integrals, we get that $y_2(x)=o(1/x^{m})$ as $x \to 0+$ for $m > 0$ and $y_2(x)=o(\textnormal{log}(1/x))$ as $x \to 0+$ for $m=0$. Consequently,
\begin{align*}
W(y_1,y_2)(x) &=y_1(x)y_2'(x)-y_1'(x)y_2(x) \\ &=(y_2'(x)-\alpha y_2(x))y_1(x)-y_2(x)(y_1'(x)-\alpha y_1(x)) \\ &=o(1/x^{m+1}),
\end{align*} 
as $x \to 0+$, where $\alpha=(r+sm)/(m+1)$. Comparing this with Proposition \ref{Wronskianprop}, we conclude that the constant term in the expression for the Wronskian in \eqref{Wronskian} must be zero. In other words, the Wronskian $W(y_1,y_2)$ must vanish identically on the positive real axis.   
\end{proof}

\begin{prop}\label{uniquenesssolutions}
Let $m \in \N$ and let $s,t,r \in \C$ be such that $s+t\neq0$. Let the function $y \in C^2(0,\infty)$ be a solution to the equation in \eqref{kummerequation} that satisfies the growth assumption in \eqref{growthassumption}. Then $y$ is a constant multiple of the confluent hypergeometric function in \eqref{kummerfunction}. 
\end{prop}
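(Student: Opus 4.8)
The plan is to compare the given solution $y$ with the explicit entire solution produced in Proposition \ref{confluentsolution}, and then use the Wronskian to force proportionality. First I would set
$y_1(x)=\Phi\big((r+sm)/(s+t),m+1,(s+t)x\big)$
for the confluent hypergeometric solution of \eqref{kummerequation}. Since $\Phi(a,b,\cdot)$ is entire when $b=m+1$ is a positive integer, $y_1$ is smooth on the whole real line, in particular on the non-negative axis, and $y_1(0)=\Phi(a,b,0)=1$, so $y_1\not\equiv 0$. The given $y\in C^2(0,\infty)$ satisfies the growth assumption \eqref{growthassumption}, which places it exactly in the role of $y_2$ in Lemma \ref{growthconditionlemma}, while $y_1$ plays the smooth solution there.

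Next I would invoke Lemma \ref{growthconditionlemma} with this pair to conclude that the Wronskian $W(y_1,y)$ vanishes identically on the positive real axis. The remaining task is the standard passage from a globally vanishing Wronskian to genuine linear dependence. Because the leading coefficient $x$ of \eqref{kummerequation} is strictly positive on $(0,\infty)$, dividing through brings the equation to normal form $y''+p(x)y'+q(x)y=0$ with $p,q$ continuous on $(0,\infty)$, so the existence and uniqueness theorem for second order linear equations is available on the open half-line.

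I would then fix any $x_0>0$: the identity $W(y_1,y)(x_0)=0$ makes the data vectors $(y_1(x_0),y_1'(x_0))$ and $(y(x_0),y'(x_0))$ proportional, so there are constants $c_1,c_2$, not both zero, with $c_1(y_1(x_0),y_1'(x_0))+c_2(y(x_0),y'(x_0))=0$. The solution $c_1y_1+c_2y$ then has vanishing value and derivative at $x_0$ and hence vanishes identically on $(0,\infty)$ by uniqueness; since $y_1\not\equiv 0$, the coefficient $c_2$ cannot vanish, and therefore $y=-(c_1/c_2)\,y_1$, a constant multiple of the function in \eqref{kummerfunction}, as claimed. The one point needing care is the singular behaviour of \eqref{kummerequation} at the origin, which is why the whole argument is confined to $(0,\infty)$ and why it is the growth assumption \eqref{growthassumption}, rather than smoothness of $y$ at $0$, that Lemma \ref{growthconditionlemma} uses to annihilate the constant $A$ in the Wronskian formula of Proposition \ref{Wronskianprop}. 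The main obstacle is thus not a single hard estimate but fitting these pieces together: confirming that $\Phi$ legitimately serves as the smooth solution $y_1$, and that vanishing of the Wronskian on all of $(0,\infty)$ (rather than at a single point for arbitrary functions) yields proportionality through the uniqueness theorem.
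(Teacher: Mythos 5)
Your proposal is correct and follows essentially the same route as the paper: identify the confluent hypergeometric function from Proposition \ref{confluentsolution} as the smooth solution $y_1$, apply Lemma \ref{growthconditionlemma} to make the Wronskian $W(y_1,y)$ vanish identically, and deduce proportionality. The paper's own proof is just a two-line pointer to those same two ingredients, leaving the final Wronskian-to-linear-dependence step implicit, which you have simply written out in full.
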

\begin{proof}
Proposition \ref{confluentsolution} tells us that the confluent hypergeometric function in \eqref{kummerfunction} is a solution to \eqref{kummerequation}, by which we conclude following an application of Lemma \ref{growthconditionlemma}.  
\end{proof}
We note here that the last few arguments apply equally well to the more general form of Kummer's equation in \eqref{kummergeneralequation} and the confluent hypergeometric function in \eqref{kummergeneralfunction}. 

The homogeneous parts of a function $u \in C^2(\D)$ satisfying \eqref{pqrdiffoporder0equation} for $s+t \neq 0$ may now be described in full.  

\begin{thm}\label{characterizationpqrharmonicthm}
Let $m \in \N$ and let $s,t,r \in \C$ be such that $s+t\neq 0$. Let $u$ be a twice continuously differentiable on $\D$ that is homogeneous of order $m$ with respect to rotations. Then $u$ satisfies \eqref{pqrdiffoporder0equation} if and only if 
\begin{equation}\label{characterizationpqrharmonicorderzero}
u(z)=K \Phi\bigg(\frac{r+sm}{s+t},m+1,(s+t)|z|^2\bigg)z^m, \quad z \in \D,
\end{equation}
for some complex constant $K \in \C$. 
\end{thm}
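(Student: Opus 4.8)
The plan is to prove both implications, treating the forward direction as an immediate consequence of the construction already in hand and reserving the real work for the converse. For the ''if'' direction, suppose $u$ has the form \eqref{characterizationpqrharmonicorderzero}. Since $M_{s,t,r}$ is linear and Proposition \ref{pqrharmonicfunctionsorderzerokummerthm} asserts that $\Phi((r+sm)/(s+t),m+1,(s+t)|z|^2)z^m$ satisfies \eqref{pqrdiffoporder0equation}, any constant multiple of it does as well; the properties demanded of $u$ are transparent, as $\Phi$ is entire in its argument and $z^m$ is a polynomial. This disposes of one direction without further computation.

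For the converse, suppose $u \in C^2(\D)$ is homogeneous of order $m$ and satisfies \eqref{pqrdiffoporder0equation}. First I would reduce the partial differential equation to an ordinary one. Homogeneity of order $m$ forces $u$ to be of the form \eqref{homogeneousf}, say $u(z)=f(|z|^2)z^m$ with $f \in C^2(0,1)$, on the punctured disc: by \eqref{homogeneity} the quotient $u(z)/z^m$ depends on $|z|^2$ alone, and it is $C^2$ in $x=|z|^2$ because $u$ is. Proposition \ref{Mpqru} then turns the identity $M_{s,t,r}u=0$ into $z^m(T_{s,t,r;m}f)(|z|^2)=0$ on $\D\setminus\{0\}$, so that $f$ solves Kummer's equation \eqref{kummerequation} on $(0,1)$.

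The heart of the matter, and the step where the regularity hypothesis is genuinely used, is to show that this particular $f$ is the distinguished regular solution, that is, a constant multiple of \eqref{kummerfunction}. My plan is to verify the growth assumption \eqref{growthassumption} and then quote Proposition \ref{uniquenesssolutions}. The verification is where I expect the only subtlety. I would use that $u\in C^2(\D)$ makes both $u$ and $\bar\partial u$ continuous, hence bounded, near the origin. Writing $x=|z|^2$ and reading off $\bar\partial u=z^{m+1}f'(|z|^2)$ from Lemma \ref{differentiationlemmafirstorder} alongside $u=z^mf(|z|^2)$, boundedness yields
\[
f(x)=O(x^{-m/2}), \qquad f'(x)=O(x^{-(m+1)/2}), \quad x\to 0+.
\]
Consequently $(m+1)f'(x)-(r+sm)f(x)=O(x^{-(m+1)/2})$, and since $x^{-(m+1)/2}=o(x^{-(m+1)})$ as $x\to0+$, the growth assumption \eqref{growthassumption} holds. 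Morally this is just the statement that the second, singular Frobenius solution at the regular singular point $x=0$ (behaving like $x^{-m}$ for $m\ge 1$, and like $\log x$ for $m=0$) would produce a $u$ that blows up at the origin, and is therefore excluded by continuity.

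With the growth assumption in place, Proposition \ref{uniquenesssolutions} identifies $f$ as a constant multiple of the confluent hypergeometric function in \eqref{kummerfunction}, which gives \eqref{characterizationpqrharmonicorderzero} on $\D\setminus\{0\}$ and then on all of $\D$ by continuity. The only bookkeeping point is that Proposition \ref{uniquenesssolutions} is phrased for solutions on $(0,\infty)$, whereas $f$ lives on $(0,1)$; since the leading coefficient of \eqref{kummerequation} is nonvanishing on $(0,\infty)$, $f$ extends uniquely to a solution there without affecting its behaviour as $x\to0+$, so the proposition applies verbatim. I expect the growth-condition verification to be the main obstacle, although, as indicated, it reduces to a clean order-of-magnitude estimate once the boundedness of $u$ and $\bar\partial u$ is exploited.
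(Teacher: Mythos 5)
Your proposal is correct and follows essentially the same route as the paper: reduce to the ordinary Kummer equation via Proposition \ref{Mpqru}, use boundedness of $u$ and $\bar\partial u$ at the origin to obtain $f(x)=O(x^{-m/2})$ and $f'(x)=O(x^{-(m+1)/2})$, verify the growth assumption \eqref{growthassumption}, and invoke Proposition \ref{uniquenesssolutions}, with the converse handled by Proposition \ref{pqrharmonicfunctionsorderzerokummerthm}. Your explicit check that $O(x^{-(m+1)/2})=o(x^{-(m+1)})$ and your remark on passing from $(0,1)$ to $(0,\infty)$ are details the paper leaves implicit, but they do not change the argument.
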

\begin{proof}
Since $u$ is homogeneous of order $m \in \N$ with respect to rotations, we can write $u(z)=z^mf(|z|^2)$ for $z \in \D$ and some $f \in C^2(0,1)$. As $u \in C^2(\D)$ is bounded at the origin, we get that $f(x)=O(1/x^{m/2})$ as $x \to 0+$. By an application of Lemma \ref{differentiationlemmafirstorder}, we also have that $f'(x)=O(1/x^{(m+1)/2})$ as $x \to 0+$. The function $f$ satisfies the equation in \eqref{kummerequation} by Proposition \ref{Mpqru}, and Prop\-osition \ref{uniquenesssolutions} then implies that $u$ is of the given form. The converse of this statement follows from Proposition \ref{pqrharmonicfunctionsorderzerokummerthm}.     
\end{proof} 
 
\begin{cor}\label{equivalentoperatorspqrharmoniccharacterizationkummer}
Let $m \in \N$ and let $s,t,r \in \C$ be such that $s+t \neq 0$. If $s',t',r' \in \C$ are complex numbers satisfying \eqref{vectorrelationequivalentoperatorskummer} for some $\mu \in \C \setminus \{0\}$ and $u \in C^2(\D)$ is a function satisfying \eqref{pqrdiffoporder0equation}, then $M_{s',t',r'}u=0$ in $\D$ if and only if $u$ is of the form given in \eqref{characterizationpqrharmonicorderzero}.
\end{cor}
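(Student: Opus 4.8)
The plan is to observe that the hypothesis \eqref{vectorrelationequivalentoperatorskummer} is precisely the coordinate form of the equivalence relation \eqref{equivalenceclasses}: the parameter vectors attached to $M_{s',t',r'}$ and $M_{s,t,r}$ differ by a non-zero multiple of $(0,1,-1,-m)$, the non-vanishing coming from the assumption $\mu\neq0$. By Lemma \ref{equivalentoperatorsets} this means the two operators differ by a non-zero scalar multiple of $A-m$, where $A$ is the angular derivative in \eqref{angularderivative}; indeed, subtracting the operators cancels the $\partial\bar\partial$-terms and leaves only the first-order and constant contributions. Since $u$ is by assumption a $C^2(\D)$ solution of \eqref{pqrdiffoporder0equationunitdisc}, so that $M_{s,t,r}u=0$ in $\D$, the condition $M_{s',t',r'}u=0$ is equivalent to the equality $M_{s',t',r'}u=M_{s,t,r}u$, which is exactly the relation analysed in Proposition \ref{equalityequivalentoperators}.

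For the forward implication I would assume $M_{s',t',r'}u=0$ and restrict attention to the punctured disc, where $u$ is twice continuously differentiable and satisfies $M_{s',t',r'}u=M_{s,t,r}u$ with the two operators equivalent and of non-zero difference. Proposition \ref{equalityequivalentoperators} then forces $u$ to be of the form \eqref{homogeneousf}, hence homogeneous of order $m$ with respect to rotations, the homogeneity persisting at the origin since $u\in C^2(\D)$. Having secured homogeneity, and with $M_{s,t,r}u=0$ still in force, I would invoke Theorem \ref{characterizationpqrharmonicthm} to pin $u$ down to the asserted form \eqref{characterizationpqrharmonicorderzero}.

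The reverse implication is the routine one. A function of the form \eqref{characterizationpqrharmonicorderzero} is a constant multiple of the Kummer solution \eqref{pqrharmonicfunctionsorderzerokummer}, and in particular is homogeneous of order $m$; Corollary \ref{equivalentoperatorsonpqrharmonicfunctionskummer} (equivalently, the other direction of Proposition \ref{equalityequivalentoperators}) then yields $M_{s',t',r'}u=0$ in $\D$ at once.

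The substantive point, and the only place where genuine analytic input enters, is the forward step, and it has already been discharged upstream in Proposition \ref{equalityequivalentoperators}: that result leans on the converse to Corollary \ref{angularderivativecorollary} --- Theorem 3.5 in \cite{OLipschitz} --- which certifies that $Au=mu$ on the punctured disc characterizes homogeneity of order $m$. With that available, the remaining obstacles are purely organisational: verifying that the coordinate translation of \eqref{vectorrelationequivalentoperatorskummer} into \eqref{equivalenceclasses} carries the correct signs, using $\mu\neq0$ to move between $M_{s',t',r'}u=0$ and the homogeneity relation, and upgrading the punctured-disc conclusion to a statement on all of $\D$ by continuity of $u\in C^2(\D)$.
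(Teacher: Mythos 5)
Your proposal is correct and follows essentially the same route as the paper, whose proof is simply a citation of Theorem \ref{characterizationpqrharmonicthm}, Corollary \ref{equivalentoperatorsonpqrharmonicfunctionskummer}, Corollary \ref{analyticcharacterizationequivalentops} and Proposition \ref{equalityequivalentoperators} — exactly the ingredients you assemble, with Proposition \ref{equalityequivalentoperators} (via the converse to Corollary \ref{angularderivativecorollary}) supplying the homogeneity needed for the forward implication. Your unpacking, including the reduction of $M_{s',t',r'}u=0$ to $\bigl(A-m\bigr)u=0$ using $\mu\neq 0$, is the intended argument.
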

\begin{proof}
This follows from the previous Theorem \ref{characterizationpqrharmonicthm} taken together with Corollary \ref{equivalentoperatorsonpqrharmonicfunctionskummer}. We can also refer to Corollary \ref{analyticcharacterizationequivalentops} and Proposition \ref{equalityequivalentoperators}. 
\end{proof}

\section{The case of Bessel}

We now turn to the case where $s,t,r \in \C$ are such that $s+t=0$ and attend to the equation in \eqref{besselequation} and its solutions. As we shall see in a moment, the latter are closely associated with the modified Bessel function,   
\begin{equation}\label{modbesselfcommonbesself}
I_m(z)=i^{-m}J_{m}(iz), \quad z \in \C,
\end{equation}
where $J_{m}$ is the common Bessel function of the first kind of order $m \in \N$. 

The remark following the confluent hypergeometric function in \eqref{kummerfunction} motivates a closer look at
\begin{equation}\label{Besselpowerseries}
\Theta\big(m+1,z)=\sum_{k=0}^{\infty}\frac{1}{(m+1)_k}\frac{z^k}{k!}, \quad z \in \C,
\end{equation}
for $m \in \N$.
It is straightforward to check that the complex power series in \eqref{Besselpowerseries} is entire, or analytic on the complex domain $\C$. 
We can also express the complex power series in \eqref{Besselpowerseries} in terms of the modified Bessel function \eqref{modbesselfcommonbesself} as
\begin{equation*}
\Theta\big(m+1,z)=\Gamma(m+1)(\sqrt{z})^{-m}I_{m}(2\sqrt{z}),
\end{equation*}
for $m \in \N$, with the understanding that this function is everywhere defined. 

The next few results show that the functions in \eqref{Besselpowerseries} are of the right form for \eqref{besselequation}.  
\begin{prop}\label{propderivativebessel}
Let $m \in \N$ and let $\lambda\in \C$. Then 
\begin{equation}\label{derivativebessel}
\frac{d}{dz}\Theta(m+1,\lambda z)=\frac{\lambda}{m+1}\Theta(m+2,\lambda z), \quad z \in \C.
\end{equation}
Consequently, 
\begin{equation}\label{derivativebesselordern}
\frac{d^n}{dz^n}\Theta(m+1,\lambda z)=\frac{\lambda^n}{(m+1)_n}\Theta(m+1+n,\lambda z), \quad z \in \C.
\end{equation}
\end{prop}
\begin{proof}
Differentiating term by term we obtain
\begin{align*}
\frac{d}{dz}\Theta(m+1,\lambda z)& =\sum_{k=1}^{\infty}\frac{1}{(m+1)_k}\frac{k\lambda^k}{k!}z^{k-1}= \frac{\lambda}{m+1}\sum_{k=0}^{\infty}\frac{1}{(m+2)_k}\frac{\lambda^k}{k!}z^{k},
\end{align*}
for $z \in \C$.
\end{proof}
\begin{lemma}\label{derivativeidentitybessellemma}
Let $m \in \N$ and let $\lambda \in \C$. Then 
\begin{equation}\label{derivativeidentitybessel}
\frac{d}{dz}\bigg(z^{m+1}\frac{d}{dz}\Theta(m+1,\lambda z)\bigg)=\lambda z^{m}\Theta(m+1,\lambda z), \quad z \in \C.
\end{equation}
\end{lemma}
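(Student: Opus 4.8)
The goal is to prove the identity
\begin{equation*}
\frac{d}{dz}\bigg(z^{m+1}\frac{d}{dz}\Theta(m+1,\lambda z)\bigg)=\lambda z^{m}\Theta(m+1,\lambda z),
\end{equation*}
for $m \in \N$ and $\lambda \in \C$. The plan is to reduce everything to the first-derivative recursion already established in Proposition \ref{propderivativebessel}, so that the left-hand side collapses to a multiple of $\Theta(m+1,\lambda z)$ after the outer differentiation is carried out.

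First I would use \eqref{derivativebessel} to rewrite the inner factor, replacing $\frac{d}{dz}\Theta(m+1,\lambda z)$ by $\frac{\lambda}{m+1}\Theta(m+2,\lambda z)$. This turns the bracketed expression into $\frac{\lambda}{m+1}z^{m+1}\Theta(m+2,\lambda z)$. The task is then to differentiate $z^{m+1}\Theta(m+2,\lambda z)$ and show that the result equals $(m+1)z^{m}\Theta(m+1,\lambda z)$, which upon multiplication by $\frac{\lambda}{m+1}$ yields the claimed right-hand side.

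The main step is the product rule applied to $z^{m+1}\Theta(m+2,\lambda z)$. The derivative of the power factor gives $(m+1)z^{m}\Theta(m+2,\lambda z)$, while applying \eqref{derivativebessel} once more (with $m$ replaced by $m+1$) gives $z^{m+1}\cdot\frac{\lambda}{m+2}\Theta(m+3,\lambda z)$, which is not immediately of the desired shape. To reconcile these I expect to need a second identity relating $\Theta(m+2,\lambda z)$, $\Theta(m+3,\lambda z)$, and $\Theta(m+1,\lambda z)$ at the level of the power series; concretely, a term-by-term comparison of coefficients showing that $z\,\frac{\lambda}{m+2}\Theta(m+3,\lambda z)$ plus the appropriate multiple of $\Theta(m+2,\lambda z)$ reassembles into $(m+1)\Theta(m+1,\lambda z)$. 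The cleanest route is therefore to bypass the intermediate recursion and instead verify the whole identity directly by expanding both sides as power series in $z$ and matching the coefficient of each $z^{k}$; this is the step I anticipate to be the genuine computational heart of the argument, and where the combinatorial identity among Pochhammer symbols $(m+1)_k$ does the work.

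I would thus carry out the proof by substituting the series \eqref{Besselpowerseries} into the left-hand side, differentiating term by term (justified since $\Theta(m+1,\lambda z)$ is entire, as noted after \eqref{Besselpowerseries}), and simplifying the general coefficient. Differentiating $z^{m+1}\frac{d}{dz}\Theta(m+1,\lambda z)$ produces a sum whose $z^{m+k}$ coefficient involves $\frac{(m+k)k\lambda^k}{(m+1)_k\,k!}$ after accounting for the two differentiations, and the right-hand side $\lambda z^{m}\Theta(m+1,\lambda z)$ has $z^{m+k}$ coefficient $\frac{\lambda^{k}}{(m+1)_{k-1}(k-1)!}$; the identity then follows from the elementary relation $(m+1)_k=(m+1)_{k-1}(m+k)$ together with $k!=k\,(k-1)!$, which makes the two coefficients coincide. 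The only care required is the bookkeeping of the index shift in the outer differentiation, but no analytic obstacle arises.
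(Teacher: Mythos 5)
Your final argument---term-by-term differentiation of the series \eqref{Besselpowerseries} and matching the coefficient of each power of $z$ via $(m+1)_k=(m+1)_{k-1}(m+k)$ and $k!=k\,(k-1)!$---is correct and is essentially the computation the paper performs, except that the paper first applies Proposition \ref{propderivativebessel} to replace the inner derivative by $\tfrac{\lambda}{m+1}\Theta(m+2,\lambda z)$ and then uses the equivalent Pochhammer identity $\tfrac{m+1+k}{(m+2)_k}=\tfrac{m+1}{(m+1)_k}$ rather than your direct coefficient match. The only blemish is a harmless off-by-one in your labelling of the powers (the coefficients you equate are those of $z^{m+k-1}$, not $z^{m+k}$), which you already flag and which is immaterial since both sides are shifted identically.
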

\begin{proof}
By \eqref{derivativebessel} of Proposition \ref{propderivativebessel}, 
\begin{equation*}
\frac{d}{dz}\bigg(z^{m+1}\frac{d}{dz}\Theta(m+1,\lambda z)\bigg)=\frac{\lambda}{m+1}\frac{d}{dz}\bigg(z^{m+1}\Theta(m+2,\lambda z)\bigg), \quad z \in \C. 
\end{equation*}
Note that  
\begin{equation*}
\frac{m+1+n}{(m+2)_n}=\frac{m+1}{(m+1)_n},
\end{equation*}
for $m,n \in \N$. This shows that
\begin{equation*}
\sum_{k=0}^{\infty}\frac{m+1+k}{(m+2)_k}\frac{\lambda^k}{k!}z^{k}=(m+1)\sum_{k=0}^{\infty}\frac{1}{(m+1)_k}\frac{\lambda^k}{k!}z^{k}, \quad z \in \C.
\end{equation*}
From this last line, we get that 
\begin{equation*}
\frac{d}{dz}\bigg(z^{m+1}\Theta(m+2,\lambda z)\bigg)=(m+1)z^m\sum_{k=0}^{\infty}\frac{1}{(m+1)_k}\frac{\lambda^k}{k!}z^{k}, \quad z \in \C,
\end{equation*}
which gives the identity in \eqref{derivativeidentitybessel}. 
\end{proof}
The identity in \eqref{derivativeidentitybessel} says that the composite of the function $\Phi(m+1,\cdot)$ with multiplication by complex numbers satisfies the differential equation
\begin{equation*}
z\frac{d^2}{dz^2}\Phi(m+1,\lambda z)+(m+1)\frac{d}{dz}\Phi(m+1,\lambda z)-\lambda\Phi(m+1,\lambda z)=0, \quad z \in \C. 
\end{equation*}

By letting $s+t=0$ in \eqref{kummersturmliouville} and comparing the resulting expression with the identity in \eqref{derivativeidentitybessel}, we obtain a solution for \eqref{besselequation} in the form of 
\begin{equation}\label{Besselpowerseriesprmform}
\Theta\big(m+1,(r+sm)z)=\sum_{k=0}^{\infty}\frac{(r+sm)^k}{(m+1)_k}\frac{z^k}{k!}, \quad z \in \C,
\end{equation}
for $s,r \in \C$ and $m \in \N$. We may treat \eqref{Besselpowerseriesprmform} as a particular instance of \eqref{kummerfunction} by the remarks that follow this last equation, and the results of the last section carry over to the present case. We thus settle with a shorter summary and refer to the last section for details. 
\begin{prop}\label{solutionpq0equation}
Let $m \in \N$ and let $s,r \in \C$. Then the complex power series in \eqref{Besselpowerseriesprmform} satisfies \eqref{besselequation} on the complex domain $\C$.
\end{prop}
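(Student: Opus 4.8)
The plan is to read off the equation directly from Lemma \ref{derivativeidentitybessellemma}, specialised to the parameter $\lambda = r+sm$. That lemma already supplies the identity
\begin{equation*}
\frac{d}{dz}\bigg(z^{m+1}\frac{d}{dz}\Theta(m+1,\lambda z)\bigg)=\lambda z^{m}\Theta(m+1,\lambda z), \quad z \in \C,
\end{equation*}
and the entire task reduces to unfolding the left-hand derivative and matching coefficients. I expect this to be essentially mechanical, since the analytic work has been absorbed into the preceding lemma.

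First I would apply the product rule to the left-hand side, obtaining
\begin{equation*}
(m+1)z^m \frac{d}{dz}\Theta(m+1,\lambda z) + z^{m+1}\frac{d^2}{dz^2}\Theta(m+1,\lambda z) = \lambda z^m \Theta(m+1,\lambda z).
\end{equation*}
For $z \neq 0$ I would then divide through by $z^m$, which yields
\begin{equation*}
z\frac{d^2}{dz^2}\Theta(m+1,\lambda z) + (m+1)\frac{d}{dz}\Theta(m+1,\lambda z) - \lambda \Theta(m+1,\lambda z) = 0.
\end{equation*}
Setting $\lambda = r+sm$ reproduces exactly the equation \eqref{besselequation}, so the series in \eqref{Besselpowerseriesprmform} solves it on the punctured plane $\C \setminus \{0\}$.

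It then remains only to account for the origin. Since $\Theta(m+1,\cdot)$ is entire by the remark following \eqref{Besselpowerseries}, and composition with the linear map $z \mapsto \lambda z$ preserves this, the left-hand side of the last display is an entire function of $z$; having shown it vanishes on $\C \setminus \{0\}$, continuity forces it to vanish at $z=0$ as well. Alternatively, one may evaluate at $z=0$ directly, using $\Theta(m+1,0)=1$ together with the value $\frac{d}{dz}\Theta(m+1,\lambda z)\big|_{z=0}=\lambda/(m+1)$ furnished by Proposition \ref{propderivativebessel}, whereupon the two surviving terms cancel. I do not anticipate any genuine obstacle here: the statement is in substance a restatement of the remark recorded immediately after Lemma \ref{derivativeidentitybessellemma}, with the substitution $\lambda = r+sm$ made explicit, and the only point requiring a word is the passage back across $z=0$.
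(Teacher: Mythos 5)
Your proof is correct and follows essentially the same route as the paper: both rest entirely on the identity of Lemma \ref{derivativeidentitybessellemma} with $\lambda=r+sm$, the paper matching it against the Sturm--Liouville form \eqref{kummersturmliouville} with $s+t=0$ while you unfold the derivative directly. Your explicit treatment of the point $z=0$ is a small additional care that the paper leaves implicit.
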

\begin{proof}
Let $\lambda=r+sm \in \C$ and set
\begin{equation*}
g(z)=\Theta\big(m+1,\lambda z), \quad z \in \C.
\end{equation*}
By the identity in \eqref{derivativeidentitybessel} we have that 
\begin{equation*}\label{solutionbesseltype}
\frac{d}{dz}\bigg(z^{m+1}\frac{d}{dz}g(z)\bigg)=\lambda z^mg(z), \quad z \in \C.
\end{equation*}
By letting $s+t=0$ in \eqref{kummerequation} and considering the resulting equation \eqref{besselequation} in the form of \eqref{kummersturmliouville}, we obtain
\begin{equation*}\label{solutiongeneralbesseltype}
\frac{d}{dx}\bigg(x^{m+1}\frac{d}{dx}y(x)\bigg)=\lambda x^my(x).
\end{equation*}
A comparison between the last two expressions shows that the power series in \eqref{Besselpowerseriesprmform} is a solution to \eqref{besselequation}.
\end{proof}
\begin{prop}\label{pqrharmonicfunctionsorderzerobesselthm}
Let $m \in \N$ and let $s,t,r \in \C$ be such that $s+t=0$. Then 
\begin{equation}\label{pqrharmonicfunctionsorderzerobessel}
u_m(z)=\Theta\big(m+1,(r+sm)|z|^2)z^m, \quad z \in \C,
\end{equation}
satisfies the equation in \eqref{pqrdiffoporder0equation}, where $\Theta(m+1,\cdot)$ denotes the complex power series in \eqref{Besselpowerseries}.
\end{prop}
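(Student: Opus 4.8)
The plan is to follow the template of Proposition \ref{pqrharmonicfunctionsorderzerokummerthm} from the preceding section, replacing the confluent hypergeometric solution by its Bessel-type counterpart. Setting $f(x)=\Theta(m+1,(r+sm)x)$, the function $u_m(z)=z^mf(|z|^2)$ is of the homogeneous form \eqref{homogeneousf}, so its image under $M_{s,t,r}$ is controlled by the reduction to ordinary operators established earlier.

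First I would apply Proposition \ref{Mpqru} to write
\[
M_{s,t,r}u_m(z)=z^m\big(T_{s,t,r;m}f\big)(|z|^2),
\]
with equality on $\C\setminus\{0\}$; this recasts the planar equation as the single-variable requirement that $f$ annihilate the operator $T_{s,t,r;m}$ of \eqref{Tgeneraliseddiffoperator}. Invoking the hypothesis $s+t=0$, the drift term in \eqref{Tgeneraliseddiffoperator} disappears and $T_{s,t,r;m}$ collapses to the operator of the Bessel-type equation \eqref{besselequation}, namely $x\,d^2/dx^2+(m+1)\,d/dx-(r+sm)$. The identity $T_{s,t,r;m}f=0$ is then precisely the content of Proposition \ref{solutionpq0equation}, which guarantees that $\Theta(m+1,(r+sm)\cdot)$ solves \eqref{besselequation} on all of $\C$. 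Combining these two observations yields $M_{s,t,r}u_m=0$ throughout $\C\setminus\{0\}$.

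The one step not covered verbatim by the cited results is the passage to the origin. Because $\Theta(m+1,\cdot)$ is entire and $z\mapsto|z|^2$ is smooth, $u_m$ is smooth on all of $\C$; both sides of $M_{s,t,r}u_m=0$ are therefore continuous at $z=0$, and the identity, already known on the punctured plane, extends there by continuity. I expect no genuine obstacle in this argument: the substance resides in the preparatory Proposition \ref{Mpqru} and Proposition \ref{solutionpq0equation}, and removing the puncture is the only new, and entirely routine, ingredient.
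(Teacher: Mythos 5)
Your proof is correct and follows essentially the same route as the paper's: apply Proposition \ref{Mpqru} to reduce to the ordinary operator $T_{s,t,r;m}$, then invoke Proposition \ref{solutionpq0equation} to see that $\Theta(m+1,(r+sm)\cdot)$ annihilates it. Your additional remark on removing the puncture at the origin by continuity is a small point the paper leaves implicit, and it is handled correctly.
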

\begin{proof}
An application of Proposition \ref{Mpqru} gives that 
\begin{equation*}
M_{s,t,r}u_m(z)=z^mT_{s,t,r,m}\Theta\big(m+1,(r+sm)|z|^2), \quad z \in \C.
\end{equation*}
The statement now follows as a consequence of Proposition \ref{solutionpq0equation}.
\end{proof}
\begin{cor}\label{pqrharmonicfunctionsorderzerobesselconjthm}
Let $m \in \Z_-$ and let $s,t,r \in \C$ be such that $s+t=0$. Then
\begin{equation}\label{pqrharmonicfunctionsorderzerobesselconjugate}
u_m(z)=\Theta\big(|m|+1,(r+t|m|)|z|^2\big) \bar{z}^{|m|}, \quad z \in \C,
\end{equation}
satisfies \eqref{pqrdiffoporder0equation}, where $\Theta(m+1,\cdot)$ is the complex power series in \eqref{Besselpowerseries}.
\end{cor}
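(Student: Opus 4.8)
The plan is to obtain this statement from the Bessel-type solution already constructed in Proposition~\ref{pqrharmonicfunctionsorderzerobesselthm} by passing to the complex conjugate, exactly as Corollary~\ref{pqrharmonicfunctionsorderzerokummerconjugatethm} was deduced from Proposition~\ref{pqrharmonicfunctionsorderzerokummerthm} in the case of Kummer. Writing $v=\overline{u_m}$, the first step is to exhibit $v$ as a function of the form treated in Proposition~\ref{pqrharmonicfunctionsorderzerobesselthm}. Since $|z|^2$ is real and the coefficients $1/((m+1)_k\,k!)$ defining $\Theta(|m|+1,\cdot)$ in \eqref{Besselpowerseries} are real, conjugation acts only on the argument, so that
\begin{equation*}
v(z)=\overline{\Theta\big(|m|+1,(r+t|m|)|z|^2\big)}\,z^{|m|}=\Theta\big(|m|+1,(\bar r+\bar t|m|)|z|^2\big)z^{|m|},\quad z\in\C.
\end{equation*}

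The second step is to read off the parameters. Because $s+t=0$ we have $\bar t+\bar s=\overline{s+t}=0$, so the triple $(\bar t,\bar s,\bar r)$ again lies in the Bessel regime, and $\bar r+\bar t|m|$ is precisely the value $r'+s'|m|$ obtained with $s'=\bar t$, $t'=\bar s$, $r'=\bar r$. Proposition~\ref{pqrharmonicfunctionsorderzerobesselthm}, applied with $m$ replaced by $|m|\in\N$ and these parameters, then gives $M_{\bar t,\bar s,\bar r}v=0$ on $\C$.

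Finally, I would invoke the conjugation rule for the operators recorded in \eqref{conjactionu}, namely that $M_{s,t,r}\,\overline{w}=\overline{M_{\bar t,\bar s,\bar r}\,w}$ for any sufficiently smooth $w$. Taking $w=v$ and noting $\overline{v}=u_m$, this yields $M_{s,t,r}u_m=\overline{M_{\bar t,\bar s,\bar r}v}=0$, as required. There is no genuine obstacle here beyond bookkeeping; the only point deserving care is the reality of the coefficients of $\Theta$, which is what guarantees that the conjugate of an $|m|$-homogeneous Bessel solution is again a Bessel solution with the conjugated parameters, together with the matching of $r+t|m|$ against the admissible form $r'+s'|m|$ under the constraint $s'+t'=0$.
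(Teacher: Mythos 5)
Your argument is correct and is essentially the paper's own proof: the paper also deduces the corollary by taking conjugates, exactly as in the Kummer case (Corollary \ref{pqrharmonicfunctionsorderzerokummerconjugatethm}), using that the real coefficients of $\Theta$ let conjugation act only on the parameters and that $M_{s,t,r}\overline{w}=\overline{M_{\bar t,\bar s,\bar r}w}$. Your write-up merely spells out the bookkeeping that the paper leaves implicit.
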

\begin{proof}
The result follows by taking conjugates, as in Corollary \ref{pqrharmonicfunctionsorderzerokummerconjugatethm}.
\end{proof}
\begin{thm}\label{characterizationpqrharmonicorderzerosumpq0thm}
Let $m \in \N$ and let $s,t,r \in \C$ be such that $s+t = 0$. Let $u$ be a twice continuously differentiable on $\D$ that is homogeneous of order $m$ with respect to rotations. Then $u$ satisfies \eqref{pqrdiffoporder0equation} if and only if 
\begin{equation}
\label{characterizationpqrharmonicorderzerobessel}
u(z)=K\Theta\big(m+1,(r+sm) |z|^2 \big)z^m, \quad z \in \D,
\end{equation}
for some complex constant $K \in \C$. 
\end{thm}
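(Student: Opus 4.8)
The plan is to mirror the argument given for the Kummer case in Theorem \ref{characterizationpqrharmonicthm}, replacing the confluent hypergeometric function $\Phi$ by the entire function $\Theta(m+1,\cdot)$ of \eqref{Besselpowerseries} and the equation \eqref{kummerequation} by its $s+t=0$ specialisation \eqref{besselequation}. Since the statement is an equivalence, I would treat the two implications separately. The converse direction is essentially already in hand: if $u$ is given by \eqref{characterizationpqrharmonicorderzerobessel}, then Proposition \ref{pqrharmonicfunctionsorderzerobesselthm} together with the linearity of $M_{s,t,r}$ shows that $u$ satisfies \eqref{pqrdiffoporder0equation}. Moreover such a $u$ is manifestly twice continuously differentiable on all of $\D$, since $\Theta$ is entire and $z^m$ is a polynomial, and it is homogeneous of order $m$ with respect to rotations, so it meets the standing hypotheses.

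For the forward implication I would start from the reduction supplied by homogeneity. Writing $u(z)=z^mf(|z|^2)$ for some $f \in C^2(0,1)$, Proposition \ref{Mpqru} shows, upon setting $s+t=0$, that the hypothesis $M_{s,t,r}u=0$ is equivalent to $f$ solving the ordinary equation \eqref{besselequation} on $(0,1)$. The regularity of $u$ at the origin must then be converted into decay of $f$ and $f'$: since $u \in C^2(\D)$ is bounded near $0$, one obtains $f(x)=O(x^{-m/2})$ as $x \to 0+$, and by Lemma \ref{differentiationlemmafirstorder}, which expresses $\bar z \bar \partial u$ in terms of $f'$, also $f'(x)=O(x^{-(m+1)/2})$. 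A short estimate shows that these two bounds imply the growth assumption \eqref{growthassumption}, since $x^{(m+1)/2}$ and $x^{(m+2)/2}$ both tend to $0$ as $x \to 0+$.

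The crux is then a uniqueness statement in the spirit of Proposition \ref{uniquenesssolutions}. As stated, that proposition assumes $s+t \neq 0$ and so cannot be invoked verbatim; instead I would re-run its proof using the Bessel solution. By Proposition \ref{solutionpq0equation}, the function $\Theta(m+1,(r+sm)\cdot)$ solves \eqref{besselequation}, and being entire with value $1$ at the origin it is smooth on $[0,\infty)$. Taking $y_2=f$, which satisfies the growth assumption, and $y_1=\Theta(m+1,(r+sm)\cdot)$ in the argument of Lemma \ref{growthconditionlemma}, whose proof remains valid for all parameters, including $s+t=0$, since the Wronskian formula in Proposition \ref{Wronskianprop} then specialises to $Ax^{-(m+1)}$, one concludes that the Wronskian of the two solutions vanishes identically on $(0,1)$. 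Because $\Theta(m+1,0)=1 \neq 0$, the vanishing of the Wronskian forces the two solutions of the second order equation \eqref{besselequation} to be linearly dependent, so that $f=K\,\Theta(m+1,(r+sm)\cdot)$ for some $K \in \C$, which is precisely \eqref{characterizationpqrharmonicorderzerobessel}.

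I expect the only genuinely new point relative to the Kummer case to be this substitution of $\Theta$ for $\Phi$ in the uniqueness step, together with the observation that the Wronskian and growth lemmas of the previous section are insensitive to the value of $s+t$; the remaining reductions transfer unchanged.
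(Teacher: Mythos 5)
Your proposal is correct and follows essentially the same route as the paper: the paper's proof likewise combines Lemma \ref{growthconditionlemma} with Proposition \ref{solutionpq0equation} to show that any $f \in C^2(0,1)$ solving \eqref{besselequation} under the growth assumption \eqref{growthassumption} is a multiple of $\Theta\big(m+1,(r+sm)\cdot\big)$, and then repeats the reduction argument of Theorem \ref{characterizationpqrharmonicthm}. Your explicit observation that Proposition \ref{uniquenesssolutions} cannot be cited verbatim because of its $s+t\neq 0$ hypothesis, and that one must instead re-run its proof with $\Theta$ in place of $\Phi$, is exactly the point the paper handles (tersely) in the same way.
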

\begin{proof}
Lemma \ref{growthconditionlemma} and Proposition \ref{solutionpq0equation} show that any function $f \in C^2(0,1)$ that satisfies \eqref{besselequation} and the criteria in \eqref{growthassumption} is a complex multiple of the function in \eqref{Besselpowerseriesprmform}. We conclude with a similar line of argument to that given in the proof of Theorem \ref{characterizationpqrharmonicthm}.
\end{proof} 
 
The two corollaries \ref{equivalentoperatorsonpqrharmonicfunctionskummer} and \ref{equivalentoperatorspqrharmoniccharacterizationkummer} from the last section carry over more or less verbatim,  following a slight change of wording.    
 
\section{A synthesis of solutions}

The last two sections showed that solutions to \eqref{pqrdiffoporder0equation} could be constructed from the functions in \eqref{homogeneousf} that were homogeneous with respect to rotations. They were divided according to the sum of the two parameters $s,t \in \C$, emphasizing the relation to the confluent hypergeometric function in the case of $s+t \neq 0$, and the Bessel function in the case of $s+t = 0$. We also mentioned earlier in the remark following \eqref{kummerfunction} that such a distinction is unnecessary, and we shall no longer deal with the two separately. To suggest a more transparent approach, we recall the notation 
\begin{equation}\label{Pochhammersymbolmodified}
( a,\lambda )_{n}=a(a+\lambda)(a+2\lambda) \ldots(a+n\lambda-\lambda),
\end{equation}
for $n \in \N$ and $a,\lambda \in\C$, where we define $( a,\lambda )_{0}=1$. In this use of language, we can express the Pochhammer symbol as $(a)_n=( a,1 )_n$ for $n \in \N$, and introduce the complex power series    
\begin{equation}\label{pqrharmonicorderzerobasicfunction}
\mathcal{P}(r+sm,s+t|m+1;z)=\sum_{k=0}^{\infty}\frac{( r+sm,s+t )_{k}}{( m+1)_k}\frac{z^k}{k!}, \quad z \in \C,
\end{equation}  
for $m \in \N$ and $s,t,r \in \C$. It is straightforward to check that the function in \eqref{pqrharmonicorderzerobasicfunction} is entire, or analytic on the complex domain $\C$. This last expression can be viewed as an outcrop of the more general
\begin{equation}\label{kummermodifiedfunction}
\sum_{k=0}^{\infty}\frac{( a,b )_{k}}{( c,d )_k}\frac{z^k}{k!}, \quad z \in \C,
\end{equation}
for $a,b,c,d \in \C$ such that $c \neq -nd$ for $n \in \N$. As \eqref{pqrharmonicorderzerobasicfunction} is expressible as one of either of the two forms discussed previously, many of its relevant properties have already been established. We will give one of concern in the more general setting, and a formula for its derivative. 
\begin{prop}\label{derivativeproppqrharmonicorderzerobasicfunction}
Let $a,b,c,d \in \C$ be such that $c \neq -nd$ for $n \in \N$, and let $G(a,b|c,d;\cdot)$ denote the complex power series in \eqref{kummermodifiedfunction}. Then
\begin{equation}\label{derivativepqrharmonicorderzerobasicfunction}
G'(a,b|c,d;z)=\frac{a}{c}G(a+b,b|c+d,d;z), \quad z \in \C. 
\end{equation}
Consequently, 
\begin{equation}\label{nderivativepqrharmonicorderzerobasicfunction}
G^{(n)}(a,b|c,d;z)=\frac{( a,b )_n}{( c,d )_n}G(a+nb,b|c+nd,d;z), \quad z \in \C. 
\end{equation}
\end{prop}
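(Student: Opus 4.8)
The plan is to differentiate the defining power series in \eqref{kummermodifiedfunction} term by term and recognise the outcome as a shifted series of the very same type. Under the standing hypothesis $c \neq -nd$ for $n \in \N$ every denominator $(c,d)_k$ is non-zero, and a ratio-test estimate (the quotient of consecutive coefficients behaves like $(a+kb)/\bigl((c+kd)(k+1)\bigr)\to 0$) shows the series converges locally uniformly on all of $\C$; this is what legitimises differentiating under the summation sign, so I may compute $G'$ directly.

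First I would write
\[
G'(a,b|c,d;z)=\sum_{k=1}^{\infty}\frac{(a,b)_k}{(c,d)_k}\frac{z^{k-1}}{(k-1)!}=\sum_{k=0}^{\infty}\frac{(a,b)_{k+1}}{(c,d)_{k+1}}\frac{z^{k}}{k!},
\]
after cancelling the factor $k$ against $k!$ and reindexing. The crux is then the elementary shift identity for the generalised Pochhammer symbol in \eqref{Pochhammersymbolmodified}, namely $(a,b)_{k+1}=a\,(a+b,b)_k$, which follows at once by peeling off the leading factor $a$ and noting that the remaining product $(a+b)(a+2b)\cdots(a+kb)$ is precisely $(a+b,b)_k$. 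Applying the same identity to the denominator gives $(c,d)_{k+1}=c\,(c+d,d)_k$, whereupon the constant $a/c$ pulls out of the sum and the residual series is exactly $G(a+b,b|c+d,d;z)$. This establishes \eqref{derivativepqrharmonicorderzerobasicfunction}.

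For the consequence \eqref{nderivativepqrharmonicorderzerobasicfunction} I would induct on $n$, applying \eqref{derivativepqrharmonicorderzerobasicfunction} repeatedly: at the $j$-th stage the parameters advance from $(a+jb,c+jd)$ to $(a+(j+1)b,c+(j+1)d)$ and accrue a factor $(a+jb)/(c+jd)$, and telescoping these factors yields the prefactor $(a,b)_n/(c,d)_n$ together with the shifted argument parameters $(a+nb,c+nd)$. The only point demanding care is that the non-vanishing hypothesis survives each shift: the condition $c \neq -nd$ for all $n \in \N$ forces $c+d \neq -nd$ as well, since that is the same family of inequalities shifted by one and hence implied. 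I expect this bookkeeping of the denominators—verifying that no pole is ever introduced so that each intermediate $G(a+jb,b|c+jd,d;\cdot)$ is genuinely defined—to be the only real obstacle, the computation itself being routine.
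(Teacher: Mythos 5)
Your proof is correct and follows essentially the same route as the paper: termwise differentiation, reindexing, and the shift identity $(a,b)_{k+1}=a\,(a+b,b)_k$ applied to numerator and denominator, with the $n$-th derivative obtained by iteration. Your extra care about convergence and the survival of the non-vanishing hypothesis under the shift $c\mapsto c+d$ goes slightly beyond what the paper records (note only that your ratio-test remark as stated presumes $d\neq 0$), but the substance of the argument is identical.
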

\begin{proof}
Taking derivatives, we see that
\begin{equation*}
G'(a,b|c,d;z)=\sum_{k=1}^{\infty}\frac{( a,b )_k}{( c,d )_k}\frac{k}{k!}z^{k-1}=\sum_{k=0}^{\infty}\frac{( a,b )_{k+1}}{( c,d )_{k+1}}\frac{z^k}{k!}, \quad z \in \C.  
\end{equation*}
Since
\begin{equation*}
\frac{( a,b )_{n+1}}{( c,d )_{n+1}}=\frac{a(a+b)(a+b+b) \ldots(a+b+(n-1)b)}{c(c+d)(c+d+d) \ldots(c+d+(n-1)d)}=\frac{a}{c}\frac{( a+b,b )_{n}}{( c+d,d )_{n}},
\end{equation*}
for $n \in \N$, we obtain
\begin{equation*}
\sum_{k=0}^{\infty}\frac{( a,b )_{k+1}}{( c,d )_{k+1}}\frac{z^k}{k!}=\frac{a}{c}\sum_{k=0}^{\infty}\frac{( a+b,b )_{k}}{( c+d,d )_{k}}\frac{z^k}{k!}, \quad z \in \C. 
\end{equation*}
\end{proof}

Note that $\mathcal{P}(r+sm,s+t|m+1;\cdot)=G(r+sm,s+t|m+1,1;\cdot)$ in the notation of this last proposition, where in the former of these two the last parameter has been suppressed. We have done so to ease readability, and will make little use of the more general expression in \eqref{kummermodifiedfunction} moving forward.

By the discussion following \eqref{kummerfunction}, the complex power series in \eqref{pqrharmonicorderzerobasicfunction} can be given in such terms when $s+t \neq 0$, and in terms of the power series in \eqref{Besselpowerseriesprmform} when $s+t=0$. Thus, the results of the previous two sections carry over to \eqref{pqrharmonicorderzerobasicfunction} unabridged, and can be translated into the current use of terminology as follows.
\begin{prop}\label{pqrharmonicorderzerobasicfunctionpqnotzeroprop}
Let $m \in \N$ and let $s,t,r \in \C$ be such that $s+t \neq 0$. Then 
\begin{equation}\label{pqrharmonicorderzerobasicfunctionpqnotzero}
\mathcal{P}(r+sm,s+t|m+1;z)=\Phi\bigg(\frac{r+sm}{s+t},m+1,(s+t)z\bigg), \quad z \in \C, 
\end{equation}
where $\Phi(a,b,\cdot)$ denotes the confluent hypergeometric function in \eqref{kummergeneralfunction}.
\end{prop}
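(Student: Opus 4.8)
The plan is to prove the identity by a direct term-by-term comparison of the two defining power series, the entire content being the scalar factorization of the generalised Pochhammer symbol already isolated in the remark following \eqref{kummerfunction}. First I would write out the left-hand side from its definition \eqref{pqrharmonicorderzerobasicfunction}, so that the generic summand is $( r+sm,s+t )_{k}\,z^k/\big(( m+1)_k\,k!\big)$, and the right-hand side from \eqref{kummergeneralfunction} with $a=(r+sm)/(s+t)$ and $b=m+1$ evaluated at $(s+t)z$, giving the generic summand $\big((r+sm)/(s+t)\big)_k\,((s+t)z)^k/\big((m+1)_k\,k!\big)$.

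The key step is the factorisation
\begin{equation*}
( r+sm,s+t )_{k}=(s+t)^k\bigg(\frac{r+sm}{s+t}\bigg)_k,
\end{equation*}
which is exactly the cancellation of factors recorded after \eqref{kummerfunction} and which is legitimate precisely because the hypothesis $s+t\neq 0$ permits division by $s+t$. Substituting this into the left-hand summand, I would absorb the factor $(s+t)^k$ against $z^k$ to produce $((s+t)z)^k$, leaving $\big((r+sm)/(s+t)\big)_k\,((s+t)z)^k/\big((m+1)_k\,k!\big)$, which is identical to the right-hand summand. Since $m+1\in\N$ lies outside $\{0,-1,-2,\ldots\}$, the Pochhammer symbols $(m+1)_k$ never vanish, so every term is well defined and the two series agree coefficient by coefficient.

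I do not expect any genuine obstacle here; both series are entire (as already noted for \eqref{pqrharmonicorderzerobasicfunction} and for $\Phi$ when $b\notin\{0,-1,-2,\ldots\}$), so there is no convergence subtlety and the equality of summands yields equality of the functions on all of $\C$. The only point requiring care is bookkeeping the role of the hypothesis $s+t\neq 0$, which is what makes the division implicit in the factorisation meaningful; this is exactly why the proposition is stated under that restriction, with the degenerate case $s+t=0$ handled separately through \eqref{Besselpowerseriesprmform}.
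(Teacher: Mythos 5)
Your proposal is correct and follows essentially the same route as the paper: both rest on the factorisation $( r+sm,s+t )_{k}=(s+t)^k\big((r+sm)/(s+t)\big)_k$ recorded in the remark after \eqref{kummerfunction}, together with $(m+1)_k=( m+1,1 )_k$, to match the two series term by term. Your added remarks on the non-vanishing of $(m+1)_k$ and the role of $s+t\neq 0$ are accurate but do not change the argument.
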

\begin{proof}
Considering the confluent hypergeometric function in \eqref{kummerfunction} and the factors appearing in each of the expansion terms, we see from the follow up remark of this expression that 
\begin{equation*}
(s+t)^n\bigg(\frac{r+sm}{s+t}\bigg)_n=( r+sm,s+t )_n,
\end{equation*}
for $n \in \N$. If we also write $(m+1)_n=( m+1,1 )_n$ for the Pochhammer symbols appearing in the denominator of each term in \eqref{kummerfunction}, we see that the two expressions agree and that equality holds in \eqref{pqrharmonicorderzerobasicfunctionpqnotzero}. 
\end{proof}
\begin{prop}\label{pqrharmonicorderzerobasicfunctionpqzeroprop}
Let $m \in \N$ and let $s,t,r \in \C$ be such that $s+t = 0$. Then 
\begin{equation}\label{pqrharmonicorderzerobasicfunctionpqzero}
\mathcal{P}(r+sm,s+t|m+1;z)=\Theta\big(m+1,(r+sm)z\big), \quad z \in \C, 
\end{equation}
where the function on the right of this equality denotes the complex power series given in \eqref{Besselpowerseriesprmform}.
\end{prop}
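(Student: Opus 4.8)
The plan is to prove the identity by a term-by-term comparison of the two power series, exactly parallel to the Kummer case treated in Proposition \ref{pqrharmonicorderzerobasicfunctionpqnotzeroprop}. The whole content lies in specializing the generalised Pochhammer symbol to the degenerate step $s+t=0$. First I would look at the numerator factors of \eqref{pqrharmonicorderzerobasicfunction}, namely $( r+sm,s+t )_{k}$. By the definition in \eqref{Pochhammersymbolmodified}, each of the $k$ factors has the form $(r+sm)+j(s+t)$ for $j=0,1,\ldots,k-1$. When $s+t=0$ every one of these collapses to $r+sm$, so that $( r+sm,s+t )_{k}=( r+sm,0 )_{k}=(r+sm)^{k}$ for each $k \in \N$, with the empty product giving the value $1$ when $k=0$.

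Substituting this into the defining series \eqref{pqrharmonicorderzerobasicfunction} for $\mathcal{P}$ then gives
\[
\mathcal{P}(r+sm,s+t|m+1;z)=\sum_{k=0}^{\infty}\frac{(r+sm)^{k}}{( m+1)_k}\frac{z^k}{k!}, \quad z \in \C,
\]
which is precisely the series \eqref{Besselpowerseriesprmform} that defines $\Theta\big(m+1,(r+sm)z\big)$, as is seen by absorbing the factor $(r+sm)^{k}$ into $z^{k}$. The two series thus agree coefficient by coefficient, and the asserted equality \eqref{pqrharmonicorderzerobasicfunctionpqzero} follows on the whole complex plane, where both sides are entire by the remark following \eqref{Besselpowerseries} and the analogous observation after \eqref{pqrharmonicorderzerobasicfunction}.

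I do not expect any genuine obstacle here: the argument is a direct specialization, and the only point requiring a word of care is the treatment of the $k=0$ term, for which both sides equal $1$ by the conventions $( a,\lambda )_{0}=1$ and $( m+1)_0=1$ recorded in \eqref{Pochhammersymbolmodified} and the definition of the ordinary Pochhammer symbol. The proof is therefore purely formal and mirrors the reasoning already given in Proposition \ref{pqrharmonicorderzerobasicfunctionpqnotzeroprop}, with the cancellation of the factor $(s+t)^{n}$ there replaced by the vanishing of the step parameter in the generalised Pochhammer symbol here.
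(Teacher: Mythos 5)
Your proof is correct and takes essentially the same route as the paper, which likewise just notes that $(r+sm,s+t)_n=(r+sm)^n$ when $s+t=0$ and that $(m+1)_n=(m+1,1)_n$. Your version merely spells out the collapse of the factors $(r+sm)+j(s+t)$ and the $k=0$ convention in more detail.
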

\begin{proof}
We simply note that   
\begin{equation*}
( r+sm,s+t )_n=(r+sm)^n,
\end{equation*}
when $s+t=0$, and that $(m+1)_n=( m+1,1 )_n$ for $n \in \N$.
\end{proof}

\section{Asymptotics}

A subset $\mathcal{F}$ of entire functions is normal if every sequence of functions in $\mathcal{F}$ has a subsequence which converges in the space of entire functions. Convergence here means convergence with respect to each of the semi-norms  
\begin{equation}\label{seminnormsnormalconvergence}
||f||_K=\max_{z \in K} |f(z)|,
\end{equation}
for $K \subset \C$ compact. We also recall from the theory of analytic functions that a subset $\mathcal{F}$ is normal if its uniformly bounded on compact subsets of $\C$, for which we can refer to \cite[Section 5.5]{LA}.

\begin{lemma}\label{normalfamily}
Let $s,t,r \in \C$. Then
\begin{equation}
|\mathcal{P}(r+sm,s+t|m+1;z)| \leq \textnormal{exp}\big((|r|+|s|+|s+t|)|z|\big), \quad z \in \C,
\end{equation}
for $m \in \N$, where $exp$ denotes the exponential function. The sequence of entire functions in \eqref{pqrharmonicorderzerobasicfunction} is therefore normal. 
\end{lemma}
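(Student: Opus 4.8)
The plan is to bound the series $\mathcal{P}(r+sm,s+t|m+1;z)$ termwise and show that the resulting majorant is independent of $m$, so that the stated exponential bound yields uniform boundedness on compact sets, hence normality via the cited criterion in \cite[Section 5.5]{LA}. The key observation is that the denominators $(m+1)_k = (m+1)(m+2)\cdots(m+k)$ are products of $k$ factors each at least $1$, so that $1/(m+1)_k \leq 1/k!$ — in fact $(m+1)_k \geq k!$ for all $m \in \N$. This is precisely where the index $m$ drops out of the estimate, and it is the mechanism that makes the bound uniform.

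First I would estimate the generalised Pochhammer symbol in the numerator. Using the definition \eqref{Pochhammersymbolmodified}, I would write
\begin{equation*}
|(r+sm,s+t)_k| = \prod_{j=0}^{k-1}|r+sm+j(s+t)| \leq \prod_{j=0}^{k-1}\big(|r|+|s|m+j|s+t|\big).
\end{equation*}
The difficulty is that the factor $|s|m$ grows with $m$, so a naive bound would not be uniform. The remedy is to pair each numerator factor with a denominator factor: since $(m+1)_k = \prod_{j=0}^{k-1}(m+1+j)$, I would compare the $j$-th factors and show that
\begin{equation*}
\frac{|r+sm+j(s+t)|}{m+1+j} \leq |r|+|s|+|s+t|,
\end{equation*}
using $|s|m \leq |s|(m+1+j)$ and $j|s+t| \leq (m+1+j)|s+t|$ together with $|r| \leq |r|(m+1+j)$. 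This reduces the quotient $|(r+sm,s+t)_k|/(m+1)_k$ to at most $(|r|+|s|+|s+t|)^k$, uniformly in $m$ and $j$.

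With that factorwise comparison in hand, the remaining steps are routine. Summing the majorised series gives
\begin{equation*}
|\mathcal{P}(r+sm,s+t|m+1;z)| \leq \sum_{k=0}^{\infty}\frac{(|r|+|s|+|s+t|)^k}{k!}|z|^k = \textnormal{exp}\big((|r|+|s|+|s+t|)|z|\big),
\end{equation*}
which is the asserted inequality. Since the right-hand side is independent of $m$ and is finite and continuous on $\C$, it is bounded on every compact $K \subset \C$; thus the family $\{\mathcal{P}(r+sm,s+t|m+1;\cdot)\}_{m \in \N}$ is uniformly bounded on compacta, and normality follows from the Montel-type criterion recalled before the statement. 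The main obstacle is purely the first bookkeeping step — arranging the factor-by-factor pairing so that the $m$-dependence cancels cleanly; everything downstream is a standard comparison with the exponential series.
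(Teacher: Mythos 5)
Your proof is correct and follows essentially the same route as the paper: the factorwise comparison $|r+sm+j(s+t)|/(m+1+j)\leq |r|+|s|+|s+t|$, the resulting bound $(|r|+|s|+|s+t|)^k$ on the quotient of Pochhammer symbols, termwise domination by the exponential series, and normality via uniform boundedness on compacta. (The preliminary remark that $(m+1)_k\geq k!$ is not actually what makes the $m$-dependence cancel — the pairing does — but this does not affect the argument.)
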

\begin{proof}
By the triangle inequality, we have that 
\begin{equation*}
\bigg|\frac{r+sm+n(s+t)}{m+1+n}\bigg| \leq |r|+|s|+|s+t|, 
\end{equation*}
for $m,n \in \N$. As such, 
\begin{equation*}
\bigg|\frac{( r+sm,s+t )_n}{( m+1, 1)_n}\bigg| = \prod_{k=0}^{n-1}\bigg|\frac{r+sm+k(s+t)}{m+1+k}\bigg| \leq (|r|+|s|+|s+t|)^n,
\end{equation*}
for $m,n \in \N$. Applying the above to each term in \eqref{pqrharmonicorderzerobasicfunction} gives an estimate 
\begin{equation*}
|\mathcal{P}(r+sm,s+t|m+1;z)| \leq \sum_{k=0}^{\infty}\frac{|z|^k}{k!} (|r|+|s|+|s+t|)^k, \quad z \in \C,
\end{equation*}
for $m \in \N$.
\end{proof}
\begin{thm}\label{pqrharmonicorderzerolimit}
Let $s,t,r \in \C$. Then
\begin{equation}
\lim\limits_{m \to \infty} \mathcal{P}(r+sm,s+t|m+1;z)=e^{sz},
\end{equation}
with normal convergence in the space of entire functions on $\C$. 
\end{thm}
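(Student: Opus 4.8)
The plan is to combine the coefficient-wise convergence of the defining power series with the normality already established in Lemma \ref{normalfamily}. First I would examine the $k$-th Taylor coefficient of $\mathcal{P}(r+sm,s+t|m+1;\cdot)$ at the origin, which is
\[
\frac{(r+sm,s+t)_k}{(m+1)_k\,k!}=\frac{1}{k!}\prod_{j=0}^{k-1}\frac{r+sm+j(s+t)}{m+1+j}.
\]
For each fixed $j$, dividing numerator and denominator by $m$ shows that the factor tends to $s$ as $m\to\infty$, so this finite product converges to $s^k$ and the coefficient converges to $s^k/k!$, which is precisely the $k$-th Taylor coefficient of $e^{sz}$.

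Next I would upgrade this coefficient-wise statement to normal convergence. By Lemma \ref{normalfamily} the sequence $\{\mathcal{P}(r+sm,s+t|m+1;\cdot)\}_{m\in\N}$ is a normal family, so every subsequence admits a further subsequence converging uniformly on compact subsets of $\C$ to some entire function $g$. Since uniform convergence on a closed disc forces the Taylor coefficients at the origin to converge, the coefficients of any such limit $g$ must equal the limiting values computed above, namely $s^k/k!$. Hence $g(z)=\sum_{k=0}^{\infty}s^kz^k/k!=e^{sz}$ for every convergent subsequence, so all subsequential limits coincide.

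Finally, because the family is normal and every convergent subsequence has the same limit $e^{sz}$, the whole sequence converges to $e^{sz}$ uniformly on compact subsets. This is the standard subsequence argument: if the sequence failed to converge to $e^{sz}$ uniformly on some compact set $K$, one could extract a subsequence staying a fixed distance from $e^{sz}$ in the $K$-seminorm, and then a normal sub-subsequence of it would converge to a limit necessarily equal to $e^{sz}$, a contradiction.

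I expect the only genuinely delicate point to be the justification of passing the limit inside the infinite series, which the normality argument handles cleanly: rather than estimating the tail of the series uniformly in $m$, the uniform bound of Lemma \ref{normalfamily} supplies the compactness, and the identification of every subsequential limit through its Taylor coefficients pins down the limit without any explicit remainder estimate. If one instead preferred a direct argument, the bound $|(r+sm,s+t)_k/(m+1)_k|\le(|r|+|s|+|s+t|)^k$ from the proof of Lemma \ref{normalfamily} dominates the series termwise by a convergent exponential series independent of $m$, so on any disc $|z|\le R$ one splits into a finite initial sum, where the coefficient-wise convergence applies, and a uniformly small tail, again yielding uniform convergence on $|z|\le R$.
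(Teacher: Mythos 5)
Your proposal is correct and follows essentially the same route as the paper: the paper likewise computes the derivatives at the origin (via Proposition \ref{derivativeproppqrharmonicorderzerobasicfunction}, giving $\mathcal{P}^{(n)}(r+sm,s+t|m+1;0)=(r+sm,s+t)_n/(m+1)_n \to s^n$) and then invokes the normality from Lemma \ref{normalfamily} together with the standard subsequence argument, which it delegates to a citation while you spell it out explicitly.
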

\begin{proof}
Let
\begin{equation}
\kappa_{m,n}=\frac{( r+sm,s+t )_n}{( m+1,1 )_{n}}.
\end{equation}
From Proposition \ref{derivativeproppqrharmonicorderzerobasicfunction} and formula \eqref{nderivativepqrharmonicorderzerobasicfunction} we have that 
\begin{equation*}
\mathcal{P}^{(n)}(r+sm,s+t|m+1;z)=\kappa_{m,n} \mathcal{P}(r+sm+n(s+t),s+t|m+1+n;z),
\end{equation*}
for $z \in \C$ and $m,n \in \N$. Evaluation at the origin gives 
\begin{equation}\label{pqrharmonicorderzerobasicfunctionatzero}
\mathcal{P}^{(n)}(r+sm,s+t|m+1;0)=\kappa_{m,n},
\end{equation}
for $m,n \in \N$. Note further that 
\begin{equation*}
\frac{sm+r+n(s+t)}{m+1+n}=\frac{s+r/m+n(s+t)/m}{1+(1+n)/m} \rightarrow s, 
\end{equation*}
as $m \rightarrow \infty$ for $n \in \N$. In view of \eqref{pqrharmonicorderzerobasicfunctionatzero}, we then obtain
$$\mathcal{P}^{(n)}(r+sm,s+t|m+1;0) \rightarrow s^n,$$as $m \rightarrow \infty$ for $n \in \N$. If we set $f(z)=e^{sz}$ for the exponential function in powers of $s$ for $z \in \C$, then $f^{(n)}(0)=s^n$ by the familiar formula for its derivative. A standard argument in line with \cite[Theorem 2.6]{kopqseries} then lets us conclude that the sequence of functions $\mathcal{P}(r+sm,s+t|m+1;\cdot)$ converges to $f$ in the space of entire functions on $\C$ as $m \to \infty$. 
\end{proof}
As an example, we give the case of Helmholtz.   
\begin{prop}\label{example1-10harmonic}
Let $u_{m}$ be a twice continuously differentiable function on $\C$ that is homogeneous of order $m \in \N$ with respect to rotations. Then $u_m$ satisfies 
\begin{equation*}
\partial \bar \partial u_{m}(z)=ru_{m}(z), \quad z \in \C,
\end{equation*}
if and only if $u_{m}$ is a complex multiple of the function
\begin{equation*}\label{-11rharmonicorderzero}
z^m\mathcal{P}(r,0|m+1;|z|^2)=z^m\sum_{k=0}^{\infty}\frac{r^k}{(m+1)_k}\frac{|z|^{2k}}{k!}, \quad z \in \C.
\end{equation*}
In the limit, we have
\begin{equation*}
\lim\limits_{m \to \infty} \mathcal{P}(r,0|m+1;z)=1,
\end{equation*}
with normal convergence in the space of entire functions on $\C$. 
\end{prop}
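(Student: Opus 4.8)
The plan is to recognise this proposition as the special case $s=t=0$ of the Bessel theory of the previous sections. Indeed, setting $s=t=0$ in \eqref{pqrdiffoporder0} yields $M_{0,0,r}=\partial\bar\partial-r$, so the stated equation $\partial\bar\partial u_m=ru_m$ is exactly $M_{0,0,r}u_m=0$, and since $s+t=0$ we are squarely in the regime treated by Theorem \ref{characterizationpqrharmonicorderzerosumpq0thm}.

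For the equivalence I would proceed as follows. Writing the homogeneous function as $u_m(z)=z^mf(|z|^2)$ on $\C\setminus\{0\}$ for some $f\in C^2(0,\infty)$, Proposition \ref{Mpqru} turns $M_{0,0,r}u_m=0$ into the ordinary equation $T_{0,0,r;m}f=0$, which is \eqref{besselequation} with $s=0$. The assumption that $u_m$ is twice continuously differentiable at the origin supplies the growth bound \eqref{growthassumption}, just as in the proof of Theorem \ref{characterizationpqrharmonicorderzerosumpq0thm}, so Lemma \ref{growthconditionlemma} together with Proposition \ref{solutionpq0equation} forces $f$ to be a constant multiple of $\Theta(m+1,r\,\cdot)$. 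Proposition \ref{pqrharmonicorderzerobasicfunctionpqzeroprop} with $s=t=0$ identifies $\Theta(m+1,rz)$ with $\mathcal{P}(r,0|m+1;z)$, which gives the asserted form of $u_m$; the converse direction is Proposition \ref{pqrharmonicfunctionsorderzerobesselthm} read with $s=0$ together with linearity. One could equally invoke Theorem \ref{characterizationpqrharmonicorderzerosumpq0thm} after the dilation reduction of the Preliminaries.

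The limiting statement I would simply read off from Theorem \ref{pqrharmonicorderzerolimit}, which asserts that $\mathcal{P}(r+sm,s+t|m+1;z)\to e^{sz}$ with normal convergence in the space of entire functions; specialising $s=t=0$ collapses the limit to $e^{0\cdot z}=1$ and transports the mode of convergence unchanged. The only point that genuinely requires attention is the passage from the unit disc, on which Theorem \ref{characterizationpqrharmonicorderzerosumpq0thm} is phrased, to the whole plane on which $u_m$ lives; this is dealt with either by running the ODE argument above directly on $(0,\infty)$ or by the scaling remark of the Preliminaries. Beyond this bookkeeping the proposition is a transcription of the $s=t=0$ instance of results already in hand, and no fresh estimate is needed.
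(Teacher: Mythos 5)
Your proposal is correct and is precisely the derivation the paper intends: the proposition is stated without proof as the specialisation $s=t=0$ of the Bessel-case characterisation (Theorem \ref{characterizationpqrharmonicorderzerosumpq0thm} via Proposition \ref{pqrharmonicorderzerobasicfunctionpqzeroprop}) and of the asymptotic Theorem \ref{pqrharmonicorderzerolimit}. Your remark on passing from $\D$ to $\C$ --- either by running the ODE argument on $(0,\infty)$ or by the dilation remark, under which $\mathcal{P}(r\rho^2,0|m+1;x/\rho^2)=\mathcal{P}(r,0|m+1;x)$ --- is the only point needing care, and you handle it correctly.
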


\section{Generalised harmonic functions and their series representations}

We will now show that the solutions to \eqref{pqrdiffoporder0equation} are smooth and can be rep\-resented as sums in terms of the basic constructions that we saw earlier, convergent on any compact set about the zero point. The given premises are thus generalised harmonic functions on open balls $B_{\rho}$ of radius $\rho > 0$ about the origin. Since scaling only has the effect of changing the parameters by a factor of $\rho^2$ however, we may restrict our attention to that of the unit disc $\D$, or go between the two as we please. While we find it convenient to state the main result of this article in latter terms, we see some merit in taking a more direct approach toward others.       

Recall that the space of smooth functions on a ball $B_{\rho}$ about the origin of radius $\rho > 0$ has a natural topology induced by the semi-norms
\begin{equation*}
||f||_{j,k,K}=\max_{z \in K}|\partial^j\bar \partial^kf(z)|,
\end{equation*} 
for $j,k \in \N$ and $K \subset B_{\rho}$ compact. The space is complete under this topology, and every absolutely convergent series in this space converges, where by convergence we mean convergence with respect to each of the semi-norms. 

\begin{prop}\label{limsupprop}
Let $s,t,r \in \C$. Then 
\begin{equation}\label{limsupstatement}
\limsup_{m \to \infty}\big(\max_{0\leq x \leq \rho} |\mathcal{P}^{(n)}(r+sm,s+t|m+1;x)|\big)^{1/m} \leq 1,  
\end{equation}
for $n \in \N$ and every positive real number $0 < \rho < \infty$.  
\end{prop}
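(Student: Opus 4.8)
The plan is to bound the $m$-th root of the maximum modulus of the derivatives $\mathcal{P}^{(n)}(r+sm,s+t|m+1;\cdot)$ over $[0,\rho]$ by an expression whose limit superior is at most $1$. The natural starting point is the derivative formula \eqref{nderivativepqrharmonicorderzerobasicfunction} of Proposition \ref{derivativeproppqrharmonicorderzerobasicfunction}, which expresses $\mathcal{P}^{(n)}$ as the constant $\kappa_{m,n}=( r+sm,s+t )_n/( m+1,1 )_n$ times the shifted function $\mathcal{P}(r+sm+n(s+t),s+t|m+1+n;\cdot)$. Since $n$ is held fixed while $m \to \infty$, the prefactor $\kappa_{m,n}$ is a ratio of $n$ factors each of the form $(r+sm+k(s+t))/(m+1+k)$, and therefore grows only polynomially in $m$ — in fact it converges to $s^n$ as $m \to \infty$, exactly as computed in the proof of Theorem \ref{pqrharmonicorderzerolimit}. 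Consequently $|\kappa_{m,n}|^{1/m} \to 1$, so the prefactor contributes nothing to the limit superior and the whole problem reduces to estimating the shifted $\mathcal{P}$ itself.

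The next step is to control $\max_{0 \leq x \leq \rho}|\mathcal{P}(r+sm+n(s+t),s+t|m+1+n;x)|$ uniformly in $m$. Here I would invoke the bound already proved in Lemma \ref{normalfamily}, suitably applied with the shifted parameters: the coefficient estimate there shows that each term's factor $|(r+sm,s+t)_k/(m+1,1)_k|$ is bounded by $(|r|+|s|+|s+t|)^k$, independently of $m$, so that $|\mathcal{P}(\cdots;x)| \leq \exp\big((|r|+|s|+|s+t|)|x|\big)$. The same argument applies verbatim to the shifted function, with the origin of the first parameter moved by $n(s+t)$ and the denominator parameter moved to $m+1+n$; one checks that the per-term ratio $|(r+sm+n(s+t)+k(s+t))/(m+1+n+k)|$ is again uniformly bounded by a constant $C$ depending only on $s,t,r$ (and not on $m$). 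Thus $\max_{0\leq x\leq\rho}|\mathcal{P}(r+sm+n(s+t),s+t|m+1+n;x)| \leq e^{C\rho}$, a bound independent of $m$.

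Combining the two pieces, the quantity inside the limit superior satisfies
\begin{equation*}
\big(\max_{0\leq x \leq \rho} |\mathcal{P}^{(n)}(r+sm,s+t|m+1;x)|\big)^{1/m} \leq |\kappa_{m,n}|^{1/m}\,\big(e^{C\rho}\big)^{1/m}.
\end{equation*}
As $m \to \infty$ the first factor tends to $1$ by the reasoning above, and the second factor $(e^{C\rho})^{1/m}$ tends to $1$ since $C\rho$ is a fixed finite constant. Hence the limit superior of the product is at most $1$, which is precisely \eqref{limsupstatement}. The main obstacle, such as it is, lies in making the uniform-in-$m$ bound for the shifted function fully rigorous: one must confirm that shifting the denominator index from $m+1$ to $m+1+n$ and the numerator origin by $n(s+t)$ does not disturb the uniform boundedness of the per-term ratios, so that Lemma \ref{normalfamily} genuinely applies to the shifted family. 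This is routine but is the step where care is required, since it is what lets the $M$-independent exponential bound survive being raised to the $1/m$ power and collapse to $1$ in the limit.
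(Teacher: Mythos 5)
Your argument is correct, and the uniform-in-$m$ bound you establish is exactly what is needed; but you reach it by a different route than the paper. The paper's proof is soft: it invokes Theorem \ref{pqrharmonicorderzerolimit} (normal convergence of $\mathcal{P}(r+sm,s+t|m+1;\cdot)$ to $e^{sz}$), notes that differentiation is continuous for the topology of the semi-norms \eqref{seminnormsnormalconvergence}, so the $n$-th derivatives converge normally to $s^n e^{sz}$ and are therefore uniformly bounded on compacts, after which taking $m$-th roots gives \eqref{limsupstatement}. You instead work explicitly: the identity \eqref{nderivativepqrharmonicorderzerobasicfunction} writes $\mathcal{P}^{(n)}$ as $\kappa_{m,n}$ times a parameter-shifted $\mathcal{P}$, and you re-run the coefficient estimate of Lemma \ref{normalfamily} on the shifted family --- the per-term ratio $|r+sm+(n+j)(s+t)|/(m+1+n+j)$ is indeed still bounded by $|r|+|s|+|s+t|$, so the shifted functions obey the same $m$-independent exponential bound. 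Your route is more elementary and self-contained (it needs only the boundedness of $\kappa_{m,n}$, not the full limit theorem), at the cost of an extra computation; the paper's is shorter but leans on the normal-families machinery already set up. One small imprecision: when $s=0$ (or when some factor $r+sm+k(s+t)$ vanishes) the prefactor $\kappa_{m,n}$ tends to $0$, so $|\kappa_{m,n}|^{1/m}$ need not tend to $1$ as you assert; but since $\kappa_{m,n}$ is bounded, $\limsup_m |\kappa_{m,n}|^{1/m}\leq 1$ still holds, which is all the conclusion requires.
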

\begin{proof}
The complex derivative $f \mapsto f'$ is continuous with respect to the topology induced by the semi-norms in \eqref{seminnormsnormalconvergence} and normal families of analytic functions on $\C$ are uniformly bounded on compact subsets, which allows us to conclude with \eqref{limsupstatement}, following an application of Theorem \ref{pqrharmonicorderzerolimit}. 
\end{proof}

We define the $m$-th homogeneous part of a suitably smooth function $u$ on the unit disc $\D$ by the integral expression
\begin{equation}\label{mthhomogeneouspart}
u_m(z)=\frac{1}{2\pi}\int_{\mathbb{T}}e^{-im\theta}u(e^{i\theta}z) d\theta, \quad z \in \D,
\end{equation}
for $m \in \Z$. It is straightforward to check that the $m$-th homogeneous part $u_m$ is homogeneous of order $m$ with respect to rotations and that it inherits the regularity of its source, such that $u_m$ is $k$-times continuously differentiable whenever $u$ is. Since the operators in \eqref{pqrdiffoporder0} are rotationally symmetric, it also follows that if $u$ satisfies \eqref{pqrdiffoporder0equation}, then likewise does its $m$-th homogeneous part. 

\begin{prop}\label{characterizationmthhomogeneouspart}
Let $s,t,r \in \C$ and let $m \in \N$. Let $u$ satisfy \eqref{pqrdiffoporder0equation} on $\D$ and let $u_m$ be its $m$-th homogeneous part. Then 
\begin{equation}\label{characterizationmthhompart}
u_m(z)=a_m\mathcal{P}(r+sm,s+t|m+1;|z|^2)z^m, \quad z \in \D,
\end{equation} 
for a constant $a_m \in \C$.
\end{prop}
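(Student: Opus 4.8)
The plan is to reduce Proposition \ref{characterizationmthhomogeneouspart} to the characterisation theorems already established in the Kummer and Bessel sections. The key observation is that the $m$-th homogeneous part $u_m$ defined in \eqref{mthhomogeneouspart} inherits two crucial properties from $u$: it is twice continuously differentiable on $\D$ (since differentiation under the integral sign preserves $C^2$ regularity), and it satisfies \eqref{pqrdiffoporder0equation} on $\D$, because the operators $M_{s,t,r}$ are rotationally invariant and hence commute with the averaging against $e^{-im\theta}$. Moreover, $u_m$ is by construction homogeneous of order $m$ with respect to rotations, that is, it satisfies \eqref{homogeneity}. These three facts together place $u_m$ squarely within the hypotheses of the earlier characterisation results.

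First I would record that $u_m$ is twice continuously differentiable on $\D$ and homogeneous of order $m$ with respect to rotations, as noted in the paragraph immediately preceding the statement. Next I would verify that $u_m$ satisfies \eqref{pqrdiffoporder0equation}; this uses the rotational symmetry of $M_{s,t,r}$ together with the fact that $u$ solves the equation, so that $M_{s,t,r} u_m$ equals the average of $e^{-im\theta} M_{s,t,r} u(e^{i\theta} z)$ over $\T$, which vanishes identically. With these properties in hand, the conclusion follows by a direct appeal to Theorem \ref{characterizationpqrharmonicthm} in the case $s+t \neq 0$ and to Theorem \ref{characterizationpqrharmonicorderzerosumpq0thm} in the case $s+t = 0$, since both assert that a twice continuously differentiable function on $\D$ that is homogeneous of order $m$ and satisfies \eqref{pqrdiffoporder0equation} must be a constant multiple of the corresponding basic solution. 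Finally I would invoke Proposition \ref{pqrharmonicorderzerobasicfunctionpqnotzeroprop} and Proposition \ref{pqrharmonicorderzerobasicfunctionpqzeroprop}, which identify the confluent hypergeometric and Bessel forms with $\mathcal{P}(r+sm,s+t|m+1;\,\cdot\,)$, to rewrite the two cases uniformly as \eqref{characterizationmthhompart}, absorbing the constant into $a_m$.

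The main obstacle, such as it is, lies not in any deep estimate but in cleanly justifying that the homogeneous part $u_m$ genuinely satisfies \eqref{pqrdiffoporder0equation} on all of $\D$, including at the origin, and that it is $C^2$ there. The earlier characterisation theorems are phrased for functions of the form \eqref{homogeneousf} on $\D \setminus \{0\}$ with a growth condition at the origin, so I would want to confirm that the regularity of $u_m$ on the full disc supplies the boundedness needed to invoke the growth assumption \eqref{growthassumption}, exactly as in the proof of Theorem \ref{characterizationpqrharmonicthm}. Once the interchange of $M_{s,t,r}$ with the rotational average is justified and the regularity at the origin is accounted for, the remainder is a bookkeeping matter of quoting the two characterisation theorems and the two identification propositions, and the statement follows at once.
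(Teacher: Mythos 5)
Your proposal is correct and follows essentially the same route as the paper: the paper likewise notes that $u_m$ inherits homogeneity, regularity and the equation from $u$ via the rotational invariance of $M_{s,t,r}$, then quotes Theorem \ref{characterizationpqrharmonicthm} with Proposition \ref{pqrharmonicorderzerobasicfunctionpqnotzeroprop} for $s+t\neq 0$ and Theorem \ref{characterizationpqrharmonicorderzerosumpq0thm} with Proposition \ref{pqrharmonicorderzerobasicfunctionpqzeroprop} for $s+t=0$. Your extra care about regularity at the origin and the interchange of $M_{s,t,r}$ with the averaging is a welcome elaboration of what the paper delegates to the preceding comments, but it is not a different argument.
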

\begin{proof}
From the preceding comments, we know that $u_m$ is homogeneous of order $m$ with respect to rotations, and that it satisfies \eqref{pqrdiffoporder0equation}. In the case of $s+t \neq 0$, we can apply Theorem \ref{characterizationpqrharmonicthm} together with Proposition \ref{pqrharmonicorderzerobasicfunctionpqnotzeroprop} for the result of this statement. Theorem \ref{characterizationpqrharmonicorderzerosumpq0thm}  taken together with Proposition \ref{pqrharmonicorderzerobasicfunctionpqzeroprop} gives the case for $s+t=0$.
\end{proof}
\begin{cor}\label{characterizationmthhomogeneouspartcor}
Let $s,t,r\in \C$ and let $m \in \Z_-$. Let $u$ satisfy \eqref{pqrdiffoporder0equation} on $\D$ and let $u_m$ be its $m$-th homogeneous part. Then 
\begin{equation}\label{characterizationmthhompartconjugate}
u_m(z)=b_m\mathcal{P}(r+t|m|,t+s\big ||m|+1;|z|^2)\bar z^{|m|}, \quad z \in \D,
\end{equation} 
for a constant $b_m \in \C$.
\end{cor}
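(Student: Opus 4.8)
The plan is to derive the negative-index case from the already-established positive-index case of Proposition \ref{characterizationmthhomogeneouspart} by passing to complex conjugates, exactly as was done in the earlier conjugate corollaries (see Corollary \ref{pqrharmonicfunctionsorderzerokummerconjugatethm} and Corollary \ref{pqrharmonicfunctionsorderzerobesselconjthm}). First I would observe that if $u$ satisfies $M_{s,t,r}u=0$ on $\D$, then its conjugate $v=\bar u$ satisfies $M_{\bar t,\bar s,\bar r}v=0$; this is the conjugation identity already recorded in \eqref{conjactionu} of Corollary \ref{Mpqruconj}, where the roles of $s$ and $t$ are interchanged and all parameters are conjugated. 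The regularity of $v$ is inherited from that of $u$, so $v \in C^2(\D)$.

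Next I would relate the homogeneous parts under conjugation. Writing $m \in \Z_-$ and setting $n=|m|>0$, a direct computation from the integral definition in \eqref{mthhomogeneouspart} shows that the $m$-th homogeneous part of $u$ is the complex conjugate of the $n$-th homogeneous part of $v=\bar u$; indeed, conjugating \eqref{mthhomogeneouspart} replaces $e^{-im\theta}$ by $e^{im\theta}=e^{-in\theta}$ and $u$ by $\bar u$, so $\overline{u_m}=(\bar u)_n=v_n$ with $n=|m|\in\N$. Thus $u_m=\overline{v_n}$, and $v_n$ is the $n$-th homogeneous part of a solution to $M_{\bar t,\bar s,\bar r}v=0$ for a nonnegative index $n$.

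I would then apply the already-proved Proposition \ref{characterizationmthhomogeneouspart} to $v_n$, but with the parameter triple $(\bar t,\bar s,\bar r)$ in place of $(s,t,r)$. This yields
\begin{equation*}
v_n(z)=c_n\,\mathcal{P}(\bar r+\bar t n,\bar t+\bar s\,|n+1;|z|^2)\,z^{n}, \quad z \in \D,
\end{equation*}
for some constant $c_n\in\C$. Taking complex conjugates and using that the coefficients of the entire series $\mathcal{P}$ conjugate entrywise (so that $\overline{\mathcal{P}(\alpha,\beta|\gamma;x)}=\mathcal{P}(\bar\alpha,\bar\beta|\gamma;x)$ for real argument $x=|z|^2$, since $m+1$ is a real integer), together with $\overline{z^n}=\bar z^{\,n}=\bar z^{\,|m|}$, recovers
\begin{equation*}
u_m(z)=\overline{v_n(z)}=b_m\,\mathcal{P}(r+t|m|,t+s\,\big||m|+1;|z|^2)\,\bar z^{\,|m|},
\end{equation*}
with $b_m=\overline{c_n}$, which is precisely \eqref{characterizationmthhompartconjugate}.

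The only genuinely delicate point, rather than an outright obstacle, is the bookkeeping of which parameters get conjugated and then swapped back: one must verify that applying Proposition \ref{characterizationmthhomogeneouspart} with the triple $(\bar t,\bar s,\bar r)$ and then conjugating the entire series returns exactly the argument $\mathcal{P}(r+t|m|,t+s\,||m|+1;\cdot)$ claimed in the statement. This hinges on the entrywise conjugation property of the generalised Pochhammer symbol, namely $\overline{(x,y)_k}=(\bar x,\bar y)_k$, and on the fact that the denominator parameter $|m|+1$ is a real integer invariant under conjugation; both are immediate from the definition in \eqref{ksymbol}. Given these, the interchange of $s$ and $t$ built into \eqref{conjactionu} lines up with the interchange visible between \eqref{characterizationmthhompart} and \eqref{characterizationmthhompartconjugate}, and the proof concludes.
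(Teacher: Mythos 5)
Your proposal is correct and follows essentially the same route as the paper: pass to $v=\bar u$, which satisfies $M_{\bar t,\bar s,\bar r}v=0$, note that $\overline{u_m}$ is the $|m|$-th homogeneous part of $v$, apply Proposition \ref{characterizationmthhomogeneouspart} with the swapped and conjugated parameters, and conjugate back. The paper's proof is just a terser version of this same argument; your extra care with the entrywise conjugation of the generalised Pochhammer symbol is a correct filling-in of details the paper leaves implicit.
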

\begin{proof}
Let $v_m$ be the conjugate of the function on the right hand side of \eqref{characterizationmthhompartconjugate}, and notice from \eqref{mthhomogeneouspart} that $\bar u_m$ is the $|m|$-th homogeneous part of the function $\bar u$. In view of Proposition \ref{characterizationmthhomogeneouspart} and the fact that $\overline{M_{s,t,r}}=M_{\bar t, \bar s, \bar r}$, we may thus write $\bar u_m=\bar b_m v_m$ for some $\bar b_m \in \C$. 
\end{proof}

In the sequel, we shall consider infinite sums of smooth functions  $\{u_m\}_{m=0}^{\infty}$ and make reference to the number $\limsup_{m \to \infty}||u_m||_{j,k,K}^{1/m}$. This number provides a classical root test for convergence, and the sum $\sum_{m=0}^{\infty}||u_m||_{j,k,K} < +\infty$ converges for $j,k \in \N$ and $K \subset B_{\rho}$ compact when this number is less than unity.

\begin{prop}\label{limsupballthm}
Let $\rho > 0$ and let $\{f_m\}_{m=0}^{\infty}$ be a sequence in $C^{\infty}[0,\rho)$ such that 
\begin{equation}
\label{limsupboundradialpart}
\limsup_{m \to \infty}\bigg(\max_{0 \leq x \leq r} |f_m^{(n)}(x)|\bigg)^{1/m}\leq \frac{1}{\sqrt{\rho}},
\end{equation}
for $n \in \N$ and $0 < r <\rho$. Let $B_{\sqrt{\rho}} \subset \C$ denote the open ball of radius $\sqrt{\rho}$ centred at the origin and consider the sequence of functions
\begin{equation}\label{umballthm}
u_m(z)=f_m(|z|^2)z^m, \quad z \in B_{\sqrt{\rho}},
\end{equation}
for $m \in \N$. Then $\limsup_{m \to \infty}||u_m||_{j,k,K}^{1/m}<1$ for every $j,k \in \N$ and compact subset $K \subset B_{\sqrt{\rho}}$.
\end{prop}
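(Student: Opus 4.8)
The plan is to reduce the seminorm $\|u_m\|_{j,k,K}=\max_{z\in K}|\partial^j\bar\partial^k u_m(z)|$ to a finite sum of explicitly identifiable terms, and then run a root-test estimate in which the hypothesis \eqref{limsupboundradialpart} controls the radial factors while the power of $z$ supplies exactly the decay needed to push the final $\limsup$ strictly below unity.

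First I would compute the mixed derivative in closed form. Since $z^m$ is holomorphic and $\bar\partial(z\bar z)=z$, a one-line induction on $k$ gives $\bar\partial^k u_m(z)=z^{m+k}f_m^{(k)}(|z|^2)$: each $\bar\partial$ leaves the holomorphic power untouched, raises the order of the radial derivative by one, and brings down a factor $z$. Applying $\partial^j$ to this through the Leibniz rule, together with the companion identity $\partial^i[f_m^{(k)}(|z|^2)]=\bar z^{\,i}f_m^{(k+i)}(|z|^2)$ (an equally immediate induction using $\partial(z\bar z)=\bar z$ and the holomorphy in $\bar z$ of $\bar z^{\,i}$), I arrive at
\begin{equation*}
\partial^j\bar\partial^k u_m(z)=\sum_{i=0}^{j}\binom{j}{i}\frac{(m+k)!}{(m+k-j+i)!}\,z^{m+k-j+i}\,\bar z^{\,i}\,f_m^{(k+i)}(|z|^2),
\end{equation*}
valid for all $m$ large enough that $m+k\ge j$, which is all that matters for the $\limsup$.

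Next I would estimate on $K$. By compactness choose $R<\sqrt{\rho}$ with $K\subset\overline{B_R}$ and put $r=R^2<\rho$, so that \eqref{limsupboundradialpart} applies with this $r$ to every radial derivative $f_m^{(k+i)}$, $0\le i\le j$. On $K$ one has $|z^{m+k-j+i}\bar z^{\,i}|=|z|^{m+k-j+2i}\le R^{\,m+k-j+2i}$ and $|f_m^{(k+i)}(|z|^2)|\le\max_{0\le x\le r}|f_m^{(k+i)}(x)|$, so the $i$-th summand is bounded by the product of a binomial coefficient, a falling factorial of length $j-i$ in $m+k$, the power $R^{\,m+k-j+2i}$, and the radial maximum. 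Raising this bound to the power $1/m$ and letting $m\to\infty$: the binomial coefficient (constant in $m$) and the falling factorial (polynomial of degree $j-i$ in $m$) each contribute a factor tending to $1$; the power $R^{\,m+k-j+2i}$ contributes $R$, since its exponent divided by $m$ tends to $1$; and by \eqref{limsupboundradialpart} the radial maximum contributes at most $1/\sqrt{\rho}$. Hence $\limsup_{m\to\infty}$ of the $1/m$-th power of the $i$-th summand is at most $R/\sqrt{\rho}$.

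Finally, since the sum has only the terms $i=0,\dots,j$, the triangle inequality together with the elementary fact that $\limsup_m(\sum_i c_{i,m})^{1/m}\le\max_i\limsup_m c_{i,m}^{1/m}$ for finitely many nonnegative sequences yields
\begin{equation*}
\limsup_{m\to\infty}\|u_m\|_{j,k,K}^{1/m}\le\frac{R}{\sqrt{\rho}}<1,
\end{equation*}
the strict inequality being forced by $R<\sqrt{\rho}$. The only genuinely delicate point is the bookkeeping: establishing the derivative formula and verifying that each combinatorial factor is subexponential in $m$. Once that is in hand, the conclusion is dictated by the gap $R<\sqrt{\rho}$ between the radius of $K$ and that of the ball, which converts the borderline bound $1/\sqrt{\rho}$ of the hypothesis into the strict bound $R/\sqrt{\rho}$ and thereby secures the root test for convergence of $\sum_m u_m$ in $C^\infty$.
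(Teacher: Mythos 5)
Your proposal is correct and follows essentially the same route as the paper: the same Leibniz-rule formula for $\partial^j\bar\partial^k u_m$ (your $z^{m+k-j+i}\bar z^{\,i}$ is the paper's $z^{m+k-j}|z|^{2i}$), the same bound on a compact set of radius $R=\omega<\sqrt{\rho}$ with the radial maxima controlled by \eqref{limsupboundradialpart} at $r=\omega^2$, and the same observation that the combinatorial factors are subexponential so the $m$-th root limit is $\omega/\sqrt{\rho}<1$. The only difference is bookkeeping: the paper bounds the whole sum at once by $(1+\omega^2)^j(m+k)^j$ before taking roots, while you take the limsup termwise and invoke the finite-max rule; both are fine.
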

\begin{proof}
Let $j,k \in \N$ be whole numbers and let $K$ be a given compact set contained in the open ball $B_{\sqrt{\rho}}=\{z \in \C: |z| < \sqrt{\rho}\}$ of positive radius $\sqrt{\rho}$ centred at zero. Take $\omega$ to be the maximum distance $|z| < \sqrt{\rho}$ as $z$ varies over the compact set $K \subset B_{\sqrt{\rho}}$. 

It is straightforward to check that 
\begin{equation*}
\partial^j \bar \partial^ku_m(z)=z^{m+k-j}\sum_{l=0}^j\binom{j}{l}\frac{(m+k)!}{(m+k+l-j)!}|z|^{2l}f_m^{(k+l)}(|z|^2),
\end{equation*}
for $z \in B_{\sqrt{\rho}}$ and $m \geq j$. By the triangle inequality, we can then write
\begin{equation*}
||u_m||_{j,k,K} \leq \omega^{m+k-j}\bigg(\sum_{l=0}^j\binom{j}{l}\frac{(m+k)!}{(m+k+l-j)!}\omega^{2l}\bigg)\max_{k \leq n \leq j+k}||f_m^{(n)}||_{[0,\omega^2]},
\end{equation*}
for $m \geq j$, where $|| f ||_{[0,w^2]}=\sup\{|f(x)| : 0 \leq x \leq \omega^2\}$. Note that 
\begin{equation*}
\frac{(m+k)!}{(m+k+n-j)!}=(m+k)\ldots(m+k+1-j)\frac{(m+k-j)!}{(m+k+n-j)!}\leq(m+k)^j,
\end{equation*}
for $j,k,m,n \in \N$ such that $m \geq j$, and recall the expansion formula
\begin{equation*}
(1+x)^n=\sum_{l=0}^n\binom{n}{l}x^{l}.
\end{equation*}
Thus,
\begin{equation*}
||u_m||_{j,k,K} \leq \omega^{m+k-j}(1+\omega^2)^j(m+k)^j\max_{k \leq n \leq j+k}||f_m^{(n)}||_{[0,\omega^2]},
\end{equation*}
for $m \geq j$. By taking the $m$-th root on both sides and passing to the limit, we then obtain 
\begin{equation*}
\limsup_{m \to \infty}||u_m||_{j,k,K}^{1/m} \leq \frac{\omega}{\sqrt{\rho}}.
\end{equation*}
Since the number $\omega$ is strictly smaller than $\sqrt{\rho}$, we may now conclude. 
\end{proof}

\begin{lemma}\label{limitcoefficients}
Let $s,t,r\in \C$ and suppose that $u$ satisfies \eqref{pqrdiffoporder0equation} on $\D$. Let $k_m=a_m$ be the complex constant appearing in the expression for the $m$-th homogeneous part of $u$ in \eqref{characterizationmthhompart} for $m \in \N$, and let $k_m=b_m$ be the complex constant appearing in the expression for the $m$-th homogeneous part of $u$ in \eqref{characterizationmthhompartconjugate} for $m \in \Z_-$. Then 
$$\limsup_{|m| \to \infty}|k_m|^{1/|m|} \leq 1.$$   
\end{lemma}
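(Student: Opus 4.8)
The plan is to recover each coefficient $k_m$ by sampling the homogeneous part $u_m$ on a circle of fixed radius $\rho < 1$ and to control its size by that of $u$ there. Fix $0 < \rho < 1$ and set $M_{\rho}=\max_{|z|=\rho}|u(z)|$, which is finite since $u \in C^2(\D)$ is in particular continuous on the circle $|z|=\rho \subset \D$. For $m \in \N$, the defining integral \eqref{mthhomogeneouspart} together with $|e^{-im\theta}|=1$ gives the averaging estimate $|u_m(z)| \leq \max_{\theta}|u(e^{i\theta}z)| \leq M_{\rho}$ for every $z$ with $|z|=\rho$. On the other hand, Proposition \ref{characterizationmthhomogeneouspart} identifies $u_m(z)=a_m\mathcal{P}(r+sm,s+t|m+1;|z|^2)z^m$, and on the circle $|z|=\rho$ the argument $|z|^2=\rho^2$ is a fixed nonnegative real, so taking moduli yields $|a_m|\,|\mathcal{P}(r+sm,s+t|m+1;\rho^2)|\,\rho^m \leq M_{\rho}$.

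First I would isolate $|a_m|$ and take $m$-th roots, obtaining
\begin{equation*}
|a_m|^{1/m} \leq \frac{M_{\rho}^{1/m}}{\rho\,\bigl|\mathcal{P}(r+sm,s+t|m+1;\rho^2)\bigr|^{1/m}}.
\end{equation*}
Here $M_{\rho}^{1/m}\to 1$ as $m \to \infty$ because $M_{\rho}$ is a fixed constant. The crucial input is the behaviour of the denominator: by Theorem \ref{pqrharmonicorderzerolimit} we have $\mathcal{P}(r+sm,s+t|m+1;\rho^2)\to e^{s\rho^2}$, a fixed \emph{nonzero} number, whence $|\mathcal{P}(r+sm,s+t|m+1;\rho^2)|^{1/m}\to 1$. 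Passing to the upper limit then gives $\limsup_{m\to\infty}|a_m|^{1/m}\leq 1/\rho$, and since $\rho<1$ was arbitrary, letting $\rho\to 1^{-}$ yields $\limsup_{m\to\infty}|a_m|^{1/m}\leq 1$.

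For $m \in \Z_-$ the identical reasoning applies to $b_m$ via the explicit form in Corollary \ref{characterizationmthhomogeneouspartcor}; alternatively, one may pass to the conjugate $\bar u$, which satisfies $M_{\bar t,\bar s,\bar r}\bar u=0$ and whose $|m|$-th (positive) homogeneous part is $\overline{u_m}$, thereby reducing the negative-index case to the one already treated. Combining the two estimates gives the claimed bound $\limsup_{|m|\to\infty}|k_m|^{1/|m|}\leq 1$.

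The step I expect to require the most care is the division by $\mathcal{P}(r+sm,s+t|m+1;\rho^2)$: the argument is only legitimate once this value is known to be bounded away from zero. This is precisely what Theorem \ref{pqrharmonicorderzerolimit} supplies, since the nonvanishing limit $e^{s\rho^2}$ forces $|\mathcal{P}(r+sm,s+t|m+1;\rho^2)|$ to be uniformly positive for all sufficiently large $m$; one therefore restricts attention to such $m$, which is harmless for computing the upper limit. Note that the argument uses only the boundedness of $u$ on closed subdiscs, so no prior smoothness of $u$ beyond $C^2$ is needed.
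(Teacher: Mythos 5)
Your proposal is correct and follows essentially the same route as the paper's own proof: the averaging bound $\max_{|z|=\rho}|u_m(z)|\leq\max_{|z|=\rho}|u(z)|$, the identification of $u_m$ via Proposition \ref{characterizationmthhomogeneouspart}, the nonvanishing limit $e^{s\rho^2}$ from Theorem \ref{pqrharmonicorderzerolimit}, taking $m$-th roots, and letting $\rho\to 1^{-}$. Your explicit remark that the limit being nonzero is what licenses the division by $|\mathcal{P}(r+sm,s+t|m+1;\rho^2)|$ for large $m$ is a point the paper leaves implicit, and is a welcome clarification.
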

\begin{proof}
Let $0 < \rho < 1$ and note from \eqref{mthhomogeneouspart} that 
\begin{equation*}
\max_{|z|=\rho}|u_m(z)| \leq \max_{|z|=\rho} |u(z)|,
\end{equation*}
for $m \in \Z$. It follows from Proposition \ref{characterizationmthhomogeneouspart} and \eqref{characterizationmthhompart} that 
\begin{equation*}
|k_m||\mathcal{P}(r+sm,s+t|m+1;\rho^2)|\rho^m \leq \max_{|z|=\rho}|u(z)|,
\end{equation*}
for $m \in \N$. We also know from Theorem \ref{pqrharmonicorderzerolimit} that
\begin{equation*}
\lim_{m \to \infty} \mathcal{P}(r+sm,s+t|m+1;\rho^2) = e^{s\rho^2}.
\end{equation*}
If we now take the $m$-th root of each side in the last inequality and then pass to the limit, we find that    
\begin{equation*}
\limsup_{m \to \infty}|k_m|^{1/m} \leq \frac{1}{\rho}.
\end{equation*} 
Since this is true for all $0 < \rho < 1$, we get the case for non-negative $m \in \N$. The case for $m \in \Z_-$ is similar and we conclude accordingly. 
\end{proof}
We recall from Proposition \ref{characterizationmthhomogeneouspart} and Corollary \ref{characterizationmthhomogeneouspartcor} that the $m$-th homo\-geneous parts of a generalised harmonic function satisfying \eqref{pqrdiffoporder0equation} are all smooth. 

\begin{prop}\label{roottestcriteria}
Let $s,t,r \in \C$. Let $u$ satisfy \eqref{pqrdiffoporder0equation} on $\D$ and write $u_m$ for its $m$-th homogeneous part for $m \in \Z$. Then
\begin{equation*}
\limsup_{|m| \to \infty}||u_m||_{j,k,K}^{1/|m|}  < 1,
\end{equation*}
for $j,k \in \N$ and every compact set $K \subset \D$. 
\end{prop}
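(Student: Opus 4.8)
The plan is to combine the two structural results already established: the explicit form of each homogeneous part from Proposition \ref{characterizationmthhomogeneouspart} and Corollary \ref{characterizationmthhomogeneouspartcor}, and the abstract convergence criterion from Proposition \ref{limsupballthm}. The strategy is to verify that the radial factors appearing in \eqref{characterizationmthhompart} and \eqref{characterizationmthhompartconjugate} satisfy the hypothesis \eqref{limsupboundradialpart} of Proposition \ref{limsupballthm} on the unit disc, so that the conclusion of that proposition applies directly.

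First I would treat the nonnegative indices $m \in \N$. By Proposition \ref{characterizationmthhomogeneouspart}, we may write $u_m(z)=f_m(|z|^2)z^m$ where $f_m(x)=k_m\,\mathcal{P}(r+sm,s+t|m+1;x)$, with $k_m=a_m$ the constant from \eqref{characterizationmthhompart}. The aim is to show that this sequence satisfies the root bound in \eqref{limsupboundradialpart} with $\rho=1$, that is $\limsup_{m\to\infty}(\max_{0\leq x\leq \rho'}|f_m^{(n)}(x)|)^{1/m}\leq 1$ for each $0<\rho'<1$ and each $n\in\N$. Since $f_m^{(n)}(x)=k_m\,\mathcal{P}^{(n)}(r+sm,s+t|m+1;x)$, the derivative estimate \eqref{limsupstatement} of Proposition \ref{limsupprop} gives $\limsup_{m\to\infty}(\max_{0\leq x\leq\rho'}|\mathcal{P}^{(n)}(r+sm,s+t|m+1;x)|)^{1/m}\leq 1$, while Lemma \ref{limitcoefficients} supplies $\limsup_{m\to\infty}|k_m|^{1/m}\leq 1$. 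The root bound for the product then follows from the submultiplicativity of $\limsup$ of the $m$-th roots, yielding the hypothesis of Proposition \ref{limsupballthm} with $\rho=1$; its conclusion then reads $\limsup_{m\to\infty}||u_m||_{j,k,K}^{1/m}<1$ for every $j,k\in\N$ and compact $K\subset\D$.

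For the negative indices $m\in\Z_-$, the argument is symmetric. Using Corollary \ref{characterizationmthhomogeneouspartcor}, one writes the $m$-th homogeneous part via the conjugated basic function $\mathcal{P}(r+t|m|,t+s\,||m|+1;|z|^2)\bar z^{|m|}$, whose radial factor is again of the form handled by Proposition \ref{limsupballthm} after passing to complex conjugates (note that $||\,\cdot\,||_{j,k,K}$ is unaffected by conjugation once the roles of $\partial$ and $\bar\partial$, and of $s$ and $t$, are interchanged). The coefficient bound $\limsup_{|m|\to\infty}|k_m|^{1/|m|}\leq 1$ for $k_m=b_m$ is precisely the second half of Lemma \ref{limitcoefficients}, and Theorem \ref{pqrharmonicorderzerolimit} applies verbatim with $s$ and $t$ interchanged to secure the analogue of \eqref{limsupstatement}. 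Combining the two index ranges gives the stated $\limsup_{|m|\to\infty}$ bound.

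The main obstacle, and the step warranting the most care, is the passage from the root estimates on the separate factors $k_m$ and $\mathcal{P}^{(n)}$ to the hypothesis \eqref{limsupboundradialpart} needed for Proposition \ref{limsupballthm}. The subtlety is that Proposition \ref{limsupballthm} demands a strict bound by $1/\sqrt{\rho}$ on the radial root quantity for \emph{each} fixed $0<r<\rho$, whereas Lemma \ref{limitcoefficients} and Proposition \ref{limsupprop} only deliver bounds by $1$; one must therefore apply Proposition \ref{limsupballthm} on a slightly larger disc. Concretely, given a compact $K\subset\D$, one chooses $\rho<1$ with $K\subset B_{\sqrt{\rho}}$ is not available directly, so instead I would apply Proposition \ref{limsupballthm} with the value $\rho$ replaced by any $\rho^*>1$: since the factor estimates hold on all of $[0,\rho')$ for every finite $\rho'$, the hypothesis \eqref{limsupboundradialpart} holds with $1/\sqrt{\rho^*}<1$, and the proposition's strict conclusion $\limsup||u_m||^{1/m}_{j,k,K}<1$ follows for compact $K\subset B_{\sqrt{\rho^*}}\supset\D$. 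Checking that this rescaling is legitimate — that the derivative and coefficient root bounds persist on arbitrarily large radial intervals — is exactly where Proposition \ref{limsupprop} (which is stated for every $0<\rho<\infty$) does the essential work.
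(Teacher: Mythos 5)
Your first two paragraphs reproduce the paper's argument exactly: write $u_m=f_m(|z|^2)z^m$ via Proposition \ref{characterizationmthhomogeneouspart}, verify the hypothesis \eqref{limsupboundradialpart} of Proposition \ref{limsupballthm} with $\rho=1$ by combining Proposition \ref{limsupprop} with Lemma \ref{limitcoefficients}, and handle $m\in\Z_-$ by passing to the conjugate function. That is precisely the paper's proof, and it is correct as stated there.

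Your third paragraph, however, manufactures an obstacle that is not present and then ``resolves'' it with a step that would actually fail. The hypothesis \eqref{limsupboundradialpart} is the \emph{non-strict} bound by $1/\sqrt{\rho}$; with $\rho=1$ this is exactly the bound by $1$ that Proposition \ref{limsupprop} and Lemma \ref{limitcoefficients} deliver, so Proposition \ref{limsupballthm} applies on the nose with $B_{\sqrt{\rho}}=\D$. The strict inequality in its conclusion does not come from strictness in the hypothesis but from its proof, where the final estimate is $\omega/\sqrt{\rho}$ with $\omega=\max_{z\in K}|z|<\sqrt{\rho}$ for a compact $K$. By contrast, your proposed application with $\rho^{*}>1$ would require the radial root quantity to be bounded by $1/\sqrt{\rho^{*}}<1$, and that bound is simply not available: Lemma \ref{limitcoefficients} and Proposition \ref{limsupprop} give $\leq 1$ and nothing better (indeed by Theorem \ref{pqrharmonicorderzerolimit} the functions $\mathcal{P}(r+sm,s+t|m+1;\cdot)$ converge to a nonzero limit uniformly on compacts, so the radial factors do not decay geometrically in $m$). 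Had your argument genuinely rested on that rescaling it would be a gap; as it stands, delete the third paragraph and keep the application with $\rho=1$.
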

\begin{proof}
Let $m \in \N$. As $u_m$ is the $m$-th homogeneous part of the function $u$, it is of the form \eqref{characterizationmthhompart} for some $a_m \in \C$.
From $\eqref{limsupstatement}$ and Lemma \ref{limitcoefficients} we see that the criteria in \eqref{limsupboundradialpart} is satisfied for $\rho=1$. Thus $\limsup_{m \to \infty}||u_m||_{j,k,K}^{1/m} < 1$ for $j,k \in \N$ and $K \subset \D$ compact. The result for $m \in \Z_-$ follows by applying the previous part to the sequence of conjugated elements $\bar{u}_m$, noting that $\bar{u}_m$ is the $|m|$-th homogeneous part of the generalised harmonic function $\bar{u}$.    
\end{proof}

The following result shows that every generalised harmonic function satisfying \eqref{pqrdiffoporder0equation} can be expanded as a sum in terms of its homogeneous parts. For this we recall that if $u$ is an $n$-times continuously differentiable function on $\D$, then
\begin{equation}\label{fejermeans}
u=\lim_{N \to \infty} \sum_{m=-N}^{N}\bigg(1-\frac{|m|}{N+1}\bigg)u_m, 
\end{equation}
in $C^{n}(\D)$, where $u_m$ is the $m$-th homogeneous part of $u$. A more elaborative description of this result, originally due to Fej\'er, can be found in \cite{Katznelson}, section I.2.

\begin{cor}\label{convergencemthhompartseries}
Let $s,t,r \in \C$. Let $u$ satisfy \eqref{pqrdiffoporder0equation} on $\D$ and write $u_m$ for its $m$-th homogeneous part for $m \in \Z$. Then the series of complex functions $\sum_{m = -\infty}^{\infty} u_m$ is absolutely convergent in the space of smooth functions on $\D$. In particular,
\begin{equation}\label{sumconvergence}
u=\sum_{m = -\infty}^{\infty} u_m,
\end{equation}
in $C^{\infty}(\D)$. 
\end{cor}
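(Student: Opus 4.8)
The plan is to deduce convergence of the two-sided series $\sum_m u_m$ from the root-test bound in Proposition \ref{roottestcriteria}, and then to identify its limit with $u$ by comparing the symmetric partial sums against the Fej\'er means recorded in \eqref{fejermeans}.

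First I would settle absolute convergence. Fix $j,k \in \N$ and a compact set $K \subset \D$. Proposition \ref{roottestcriteria} furnishes $\limsup_{|m| \to \infty}||u_m||_{j,k,K}^{1/|m|} < 1$, so the classical root test applies separately to the two tails $\sum_{m \geq 0}||u_m||_{j,k,K}$ and $\sum_{m < 0}||u_m||_{j,k,K}$, each of which therefore converges. Adding them, $\sum_{m=-\infty}^{\infty}||u_m||_{j,k,K} < \infty$ for every semi-norm, which is exactly absolute convergence of $\sum_m u_m$ in $C^{\infty}(\D)$. Since that space is complete and absolutely convergent series converge there, the symmetric partial sums $S_N = \sum_{m=-N}^{N} u_m$ converge in $C^{\infty}(\D)$ to some limit $v$, which is smooth by virtue of being a $C^{\infty}(\D)$-limit.

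The remaining and, I expect, most delicate point is to show $v = u$, the difficulty being a regularity mismatch: $v$ is smooth by construction, while $u$ is assumed only twice continuously differentiable. To bridge this I would pass to Ces\`aro averages. Since $S_N \to v$ in each semi-norm, the averaged sums $\frac{1}{N+1}\sum_{n=0}^{N} S_n$ converge to the same limit $v$; and a direct rearrangement shows that these averages coincide with the Fej\'er means $\sum_{m=-N}^{N}\big(1 - |m|/(N+1)\big)u_m$, the coefficient of $u_m$ working out to $1 - |m|/(N+1)$ in both. On the other hand, because $u \in C^2(\D)$, Fej\'er's theorem \eqref{fejermeans} guarantees that these very means converge to $u$ in $C^2(\D)$. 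Matching the two limits in the $C^2$ topology forces $u = v$, whence $u$ is in fact smooth and the series $\sum_m u_m$ converges to $u$ in $C^{\infty}(\D)$, which is the asserted identity \eqref{sumconvergence}.
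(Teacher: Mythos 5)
Your proposal is correct and follows essentially the same route as the paper: absolute convergence of $\sum_m u_m$ in $C^{\infty}(\D)$ via Proposition \ref{roottestcriteria} and the root test, followed by the Fej\'er means \eqref{fejermeans} to identify the limit with $u$. Your observation that the Ces\`aro averages of the symmetric partial sums coincide exactly with the Fej\'er means is a clean and explicit way of carrying out what the paper calls the ``standard argument'' for escaping the convergence factors, and it correctly resolves the regularity mismatch between the smooth limit and the merely $C^2$ function $u$.
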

\begin{proof}
By the root test and the previous Prop\-osition \ref{roottestcriteria}, we conclude that the sum converges absolutely in $C^{\infty}(\D)$, and so converges in $C^{\infty}(\D)$. This gives the first part of the statement. As for the second part, we can write $u$ on the form \eqref{fejermeans} with convergence in $C^{2}(\D)$. A standard argument now shows that we can escape the convergence factors in \eqref{fejermeans} and conclude with \eqref{sumconvergence}. 
\end{proof}
 
Finally then, we may state the following.  

\begin{thm}\label{seriesexpansionpqrharmonicfunctionsorderzero}
Let $s,t,r \in \C$. Then $u$ satisfies \eqref{pqrdiffoporder0equation} on $\D$ if and only if it can be written as a sum 
\begin{align}\label{seriesexpansionorderzero}
u(z)&=\sum_{m=0}^{\infty}k_m\mathcal{P}(r+sm,s+t|m+1;|z|^2)z^m \\ &+\sum_{m=1}^{\infty}k_{-m}\mathcal{P}(r+tm,t+s|m+1;|z|^2)\bar z^m \notag,
\end{align}
for $z \in \D$ and some sequence $\{k_m\}_{m \in \Z}$ of complex numbers satisfying
\begin{equation}\label{coefficientscondition}
\limsup_{|m| \to \infty}|k_m|^{1/|m|} \leq 1.
\end{equation}
The sum in \eqref{seriesexpansionorderzero} is absolutely convergent in $C^{\infty}(\D)$ when $\{k_m\}_{m \in \Z}$ is a sequence satisfying \eqref{coefficientscondition}.
\end{thm}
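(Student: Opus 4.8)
The plan is to prove the two directions separately, with most of the work already assembled in the preceding results. The statement bundles together an \emph{expansion} claim and a \emph{convergence} claim, and it is cleanest to treat the sufficiency (convergence and harmonicity of any such series) first, since it is the more self-contained of the two.

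\medskip

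\emph{Sufficiency.} Suppose $\{k_m\}_{m \in \Z}$ satisfies \eqref{coefficientscondition}. First I would show that each summand
\begin{equation*}
u_m(z)=k_m\mathcal{P}(r+sm,s+t|m+1;|z|^2)z^m, \quad m \geq 0,
\end{equation*}
(and its conjugate analogue for $m<0$) defines a smooth function on $\D$ that satisfies \eqref{pqrdiffoporder0equation}; this is exactly Proposition \ref{characterizationmthhomogeneouspart} combined with Proposition \ref{pqrharmonicfunctionsorderzerokummerthm} and Proposition \ref{pqrharmonicfunctionsorderzerobesselthm} (or, after the synthesis, the fact that $\mathcal{P}$ solves the relevant ordinary equation via Proposition \ref{Mpqru}). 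The key analytic point is absolute convergence in $C^{\infty}(\D)$. Here I would apply Proposition \ref{limsupballthm} with $f_m(x)=k_m\mathcal{P}(r+sm,s+t|m+1;x)$ and $\rho=1$: the hypothesis \eqref{limsupboundradialpart} follows by combining the root bound $\limsup_m|k_m|^{1/m}\leq 1$ from \eqref{coefficientscondition} with the uniform derivative estimate \eqref{limsupstatement} of Proposition \ref{limsupprop}, since $\limsup$ of a product of two nonnegative sequences with these bounds is at most $1$. This yields $\limsup_{m\to\infty}\|u_m\|_{j,k,K}^{1/m}<1$ for every $j,k$ and compact $K\subset\D$, so the root test gives absolute convergence; the $m<0$ terms are handled identically after conjugation. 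Since $C^{\infty}(\D)$ is complete and the differential operator $M_{s,t,r}$ is continuous on it, the sum $u$ is smooth and $M_{s,t,r}u=\sum_m M_{s,t,r}u_m=0$.

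\medskip

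\emph{Necessity.} Conversely, suppose $u$ satisfies \eqref{pqrdiffoporder0equation} on $\D$. By Proposition \ref{characterizationmthhomogeneouspart} and Corollary \ref{characterizationmthhomogeneouspartcor}, each homogeneous part $u_m$ has precisely the stated closed form with some coefficient $k_m$ (namely $a_m$ for $m\geq 0$ and $b_{|m|}$ for $m<0$). Lemma \ref{limitcoefficients} furnishes the root condition \eqref{coefficientscondition} on these coefficients. Finally, Corollary \ref{convergencemthhompartseries} gives $u=\sum_{m=-\infty}^{\infty}u_m$ in $C^{\infty}(\D)$, which is exactly the series \eqref{seriesexpansionorderzero} once the explicit forms of the $u_m$ are substituted.

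\medskip

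I expect essentially no serious obstacle, since the theorem is a synthesis: every analytic ingredient (the closed forms, the coefficient bound, the Fej\'er summation, and the root-test convergence criterion) has already been isolated in the preceding propositions. The one point requiring a little care is verifying that the hypothesis \eqref{limsupboundradialpart} of Proposition \ref{limsupballthm} genuinely follows from \eqref{coefficientscondition} together with \eqref{limsupstatement}—that is, confirming the $\limsup$ of the product $|k_m|\,\max_{0\leq x\leq r}|\mathcal{P}^{(n)}(\cdots;x)|$ raised to the $1/m$ power is bounded by the product of the individual $\limsup$ bounds. This is the subadmultiplicativity of $\limsup$ for nonnegative sequences and is routine, but it is the hinge on which the convergence rests, so I would state it explicitly rather than leave it implicit.
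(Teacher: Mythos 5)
Your proposal is correct and follows essentially the same route as the paper's own proof: sufficiency via the root-test estimate of Proposition \ref{limsupballthm} (fed by \eqref{coefficientscondition} and Proposition \ref{limsupprop}, exactly as in Proposition \ref{roottestcriteria}) together with the Kummer/Bessel solution propositions, and necessity via Proposition \ref{characterizationmthhomogeneouspart}, Corollary \ref{characterizationmthhomogeneouspartcor}, Lemma \ref{limitcoefficients} and Corollary \ref{convergencemthhompartseries}. The only cosmetic difference is that you make explicit the submultiplicativity of the $\limsup$ and the continuity of $M_{s,t,r}$ on $C^{\infty}(\D)$, which the paper leaves implicit.
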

\begin{proof}
Let $\{k_m\}_{m \in \Z}$ be a sequence of complex numbers such that \eqref{coefficientscondition} holds and set
\begin{equation}\label{mthhompartseriesexpansion}
u_m(z)=k_m\mathcal{P}(r+sm,s+t|m+1;|z|^2)z^m, \quad z \in \D,
\end{equation}
for $m \in \N$, noting that these are the solutions to \eqref{pqrdiffoporder0equation} that were set up in \eqref{characterizationmthhompart}. Similarly, we let 
\begin{equation}\label{mthhompartsconjugateseriesexpansion}
u_m(z)=k_m\mathcal{P}(r+t|m|,t+s||m|+1;|z|^2)\bar z^{|m|}, \quad z \in \D,
\end{equation}
for $m \in \Z_{-}$, and recall the functions in \eqref{characterizationmthhompartconjugate}. 

As for the first part, consider the formal expression 
\begin{equation}\label{formalsum}
u \sim \sum_{m=-\infty}^{\infty} u_m. 
\end{equation}
The same argument that we gave for Proposition \ref{roottestcriteria} shows that 
\begin{equation*}\label{roottestseriesexpansion}
\limsup_{|m| \to \infty}||u_m||_{j,k,K}^{1/|m|} < 1, 
\end{equation*} 
for $j,k \in \N$ and compact sets $K \subset \D$, with $u_m$ as in \eqref{mthhompartseriesexpansion} for $m \in \N$ or \eqref{mthhompartsconjugateseriesexpansion} in the case of $m \in \Z_-$. An application of the root test then shows that the sum \eqref{formalsum} converges in $C^{\infty}(\D)$, and allows us to write \eqref{formalsum} with equality. In the case of $s+t \neq 0$, the results of Proposition \ref{pqrharmonicfunctionsorderzerokummerthm} and Corollary \ref{pqrharmonicfunctionsorderzerokummerconjugatethm} now carry over through Proposition \ref{pqrharmonicorderzerobasicfunctionpqnotzeroprop} and show that the functions in \eqref{mthhompartseriesexpansion} and \eqref{mthhompartsconjugateseriesexpansion} satisfy \eqref{pqrdiffoporder0equation}. Proposition \ref{pqrharmonicfunctionsorderzerobesselthm} and Corollary \ref{pqrharmonicfunctionsorderzerobesselconjthm} taken together with Proposition \ref{pqrharmonicorderzerobasicfunctionpqzeroprop} give a similar conclusion in the case of $s+t=0$. 
Thus $u$ is a generalised harmonic in $C^{\infty}(\D)$, and completes the first part. 

Conversely, let $u$ satisfy \eqref{pqrdiffoporder0equation}. If we denote its $m$-th homogeneous part by $u_m$, then Proposition \ref{characterizationmthhomogeneouspart} and Corollary \ref{characterizationmthhomogeneouspartcor} tell us that the $m$-th homogeneous parts are of either of the two forms \eqref{mthhompartseriesexpansion} or \eqref{mthhompartsconjugateseriesexpansion}, depending on whether $m \in \N$ or $m \in \Z_-$. Lemma \ref{limitcoefficients} and Corollary \ref{convergencemthhompartseries} now allow us to conclude with the statement of this Theorem.  
\end{proof}

We shall continue in the vein of earlier expositions and scetch a few results concerning the expansion coefficients in \eqref{seriesexpansionorderzero}. 

\begin{prop}
Let $s,t,r \in \C$. Let $u$ be a generalised harmonic function on $\D$ with series representation \eqref{seriesexpansionorderzero} that is characterized by the sequence of coefficients $\{k_m\}_{m \in \Z}$ conditioned by \eqref{coefficientscondition}. Then 
\begin{equation}\label{integralexpressionkm}
k_m = \lim_{\rho \to 0} \frac{1}{2\pi \rho^{|m|}}\int_{\mathbb{T}}u(\rho e^{i \theta})e^{-im\theta} d\theta,
\end{equation} 
for $m \in \Z$. 
\end{prop}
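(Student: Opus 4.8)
The plan is to extract the coefficient $k_m$ by exploiting the orthogonality of the characters $e^{im\theta}$ on the circle $\T$, which is precisely the mechanism that produces the $m$-th homogeneous part in \eqref{mthhomogeneouspart}. First I would recall that by Theorem \ref{seriesexpansionpqrharmonicfunctionsorderzero}, the generalised harmonic function $u$ admits the representation \eqref{seriesexpansionorderzero}, and that for $m \in \N$ the $m$-th homogeneous part is $u_m(z)=k_m\mathcal{P}(r+sm,s+t|m+1;|z|^2)z^m$, while for $m \in \Z_-$ it has the conjugate form \eqref{mthhompartsconjugateseriesexpansion}. By definition \eqref{mthhomogeneouspart} of the homogeneous part together with the uniqueness of this decomposition, we have
\begin{equation*}
u_m(z)=\frac{1}{2\pi}\int_{\mathbb{T}}e^{-im\theta}u(e^{i\theta}z)\, d\theta, \quad z \in \D,
\end{equation*}
for every $m \in \Z$.

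The key step is then to evaluate this integral along a small circle $|z|=\rho$. Writing $z=\rho e^{i\varphi}$ and using the integral expression for $u_m$, a change of variables turns the identity into a relation between $u_m(\rho e^{i\varphi})$ and the Fourier-type integral of $u$ against $e^{-im\theta}$. The cleanest route for $m\in\N$ is to set $z=\rho$ (a real point on the positive axis), giving
\begin{equation*}
u_m(\rho)=k_m\,\mathcal{P}(r+sm,s+t|m+1;\rho^2)\,\rho^m=\frac{1}{2\pi}\int_{\mathbb{T}}u(\rho e^{i\theta})e^{-im\theta}\, d\theta.
\end{equation*}
Solving for $k_m$ and dividing by $\rho^m$ yields
\begin{equation*}
k_m=\frac{1}{2\pi\rho^m\,\mathcal{P}(r+sm,s+t|m+1;\rho^2)}\int_{\mathbb{T}}u(\rho e^{i\theta})e^{-im\theta}\, d\theta.
\end{equation*}

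The final step is to take the limit as $\rho\to 0$. Here I would invoke the fact that $\mathcal{P}(r+sm,s+t|m+1;\rho^2)\to \mathcal{P}(r+sm,s+t|m+1;0)=1$ as $\rho\to 0$, since this is a power series with constant term $1$ (see \eqref{pqrharmonicorderzerobasicfunction}); in fact for fixed $m$ the function is continuous at the origin with value $1$. Consequently the scalar factor $\mathcal{P}(r+sm,s+t|m+1;\rho^2)$ tends to $1$, and the expression collapses to \eqref{integralexpressionkm} for $m\in\N$. For $m\in\Z_-$ one argues identically using the conjugate form \eqref{mthhompartsconjugateseriesexpansion} and the value $\mathcal{P}(r+t|m|,t+s||m|+1;0)=1$, again evaluating at the real point $z=\rho$ so that $\bar z^{|m|}=\rho^{|m|}$.

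The main obstacle is the harmless-looking but essential observation that the limit exists and the radial factor does not vanish for small $\rho$. Since $\mathcal{P}(\cdots;0)=1$, continuity guarantees $\mathcal{P}(\cdots;\rho^2)\neq 0$ for all sufficiently small $\rho>0$, so dividing by it is legitimate in the limit; this is the only point requiring care, and it is settled directly by the series definition rather than by the more delicate large-$m$ asymptotics of Theorem \ref{pqrharmonicorderzerolimit}. One should note that no interchange of limit and summation is needed, because the orthogonality already isolates the single term $u_m$ before passing to the limit in $\rho$.
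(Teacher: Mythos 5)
Your proposal is correct and follows essentially the same route as the paper: both extract the single term $k_m\mathcal{P}(r+sm,s+t|m+1;\rho^2)\rho^m$ by integrating $u(\rho e^{i\theta})e^{-im\theta}$ over $\T$ (the paper phrases this as termwise integration of \eqref{seriesexpansionorderzero}, you phrase it via the homogeneous-part projection, which is the same orthogonality mechanism), then divide by $\rho^{|m|}$ and let $\rho\to 0$. Your explicit remark that $\mathcal{P}(\cdots;0)=1$ because the series has constant term $1$ is exactly the fact the paper leaves implicit in ``taking the limit gives the case for $m\in\N$.''
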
  
\begin{proof}
Let $m \in \N$. By expressing $u$ in the form of \eqref{seriesexpansionorderzero} and integrating termwise, we get that 
\begin{equation*}\label{integralcoefficients}
\frac{1}{2\pi}\int_{\mathbb{T}}u(\rho e^{i \theta})e^{-im\theta} d\theta=k_m\mathcal{P}(r+sm,s+t|m+1;\rho^2)\rho^m,
\end{equation*}
for $0 < \rho < 1$. Dividing through by $\rho^m$ and taking the limit gives the case for $m \in \N$. The case for negative integers is similar.  
\end{proof}

\begin{thm}\label{coefficientsdetermined}
Let $s,t,r \in \C$. Let $u$ be a generalised harmonic function on $\D$ with series representation given by \eqref{seriesexpansionorderzero} for some expansion coefficients $\{k_m\}_{m \in \Z}$ conditioned by \eqref{coefficientscondition}. Then
\begin{equation}
k_m=\partial^m u(0)/m! \quad  \text{and} \quad  k_{-m}=\bar \partial^m u(0) / m!,
\end{equation}
for $m \in \N$. 
\end{thm}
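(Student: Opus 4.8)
The plan is to recover the coefficients by differentiating the series \eqref{seriesexpansionorderzero} term by term at the origin. Since Theorem \ref{seriesexpansionpqrharmonicfunctionsorderzero} guarantees that the sum representing $u$ converges in $C^{\infty}(\D)$, convergence holds with respect to every seminorm $\|\cdot\|_{j,k,K}$; in particular $\partial^{n}u$ and $\bar\partial^{n}u$ may be computed by differentiating the series term by term, the differentiated series again converging uniformly on compact subsets of $\D$. Evaluation at $z=0$ is therefore legitimate, so that $\partial^{n}u(0)=\sum_{m\in\Z}\partial^{n}u_{m}(0)$, where $u_{m}$ denotes the $m$-th summand in \eqref{seriesexpansionorderzero}. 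It thus suffices to isolate the contribution of each homogeneous part.

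First I would expand each summand as a convergent power series in $z$ and $\bar z$ after substituting $|z|^{2}=z\bar z$. Because the constant term of the series in \eqref{pqrharmonicorderzerobasicfunction} equals $1$, the summand with $m\ge 0$ takes the form $u_{m}(z)=k_{m}z^{m}\sum_{j\ge 0}c_{m,j}(z\bar z)^{j}=k_{m}\sum_{j\ge 0}c_{m,j}\,z^{m+j}\bar z^{j}$ with $c_{m,0}=1$, while the summand indexed by $-m$ (for $m\ge 1$) is its antiholomorphic analogue $k_{-m}\sum_{j\ge 0}c_{-m,j}\,z^{j}\bar z^{m+j}$ with $c_{-m,0}=1$. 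Treating $\partial=\partial_{z}$ and $\bar\partial=\bar\partial_{z}$ as the usual Wirtinger derivatives, one has $\partial^{n}(z^{a}\bar z^{b})\big|_{z=0}=n!$ precisely when $a=n$ and $b=0$, and $\partial^{n}(z^{a}\bar z^{b})\big|_{z=0}=0$ otherwise. Hence the only monomial surviving $\partial^{n}(\,\cdot\,)|_{0}$ is the one with vanishing $\bar z$-exponent, which forces $j=0$ and $m=n$ among the holomorphic summands, and cannot occur at all among the antiholomorphic summands, where the $\bar z$-exponent is at least $m\ge 1$.

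Collecting these contributions yields $\partial^{n}u(0)=k_{n}\,c_{n,0}\,n!=n!\,k_{n}$, and hence $k_{n}=\partial^{n}u(0)/n!$ for $n\in\N$. The statement for the negatively indexed coefficients follows by applying the same computation to $\bar\partial^{n}$, for which only the antiholomorphic summand with index $-n$ survives, giving $k_{-n}=\bar\partial^{n}u(0)/n!$; alternatively one may pass to the conjugate $\bar u$, which is generalised harmonic for $M_{\bar t,\bar s,\bar r}$, and invoke the already established positive case. The only point requiring genuine care is the interchange of differentiation and summation, and this is exactly what the $C^{\infty}(\D)$-convergence supplied by Theorem \ref{seriesexpansionpqrharmonicfunctionsorderzero} provides. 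A second, essentially equivalent route would be to start from the integral formula \eqref{integralexpressionkm} and substitute the finite Taylor expansion of the smooth function $u$ at the origin: the orthogonality relation $\tfrac{1}{2\pi}\int_{\T}e^{i(a-b)\theta}e^{-im\theta}\,d\theta=\delta_{a-b,m}$ then isolates, after dividing by $\rho^{|m|}$ and letting $\rho\to 0$, the single lowest-order monomial $z^{m}\bar z^{0}$, whose coefficient is precisely $\partial^{m}u(0)/m!$.
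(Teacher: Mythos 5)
Your argument is correct, and your primary route differs from the one the paper takes. The paper proves the identity by starting from the integral formula \eqref{integralexpressionkm}, inserting the Taylor expansion of $u$ about the origin into the circle integral, and passing to the limit $\rho\to 0$, deferring the details to \cite[Theorem 5.3]{kopqseries}; this is exactly the ``second, essentially equivalent route'' you sketch at the end. Your main proof instead differentiates the series \eqref{seriesexpansionorderzero} termwise at the origin, which is legitimate because Theorem \ref{seriesexpansionpqrharmonicfunctionsorderzero} gives absolute convergence in $C^{\infty}(\D)$, and then observes that since $\mathcal{P}(\,\cdot\,;0)=1$ and $\partial^{n}\bigl(z^{a}\bar z^{b}\bigr)\big|_{0}$ vanishes unless $a=n$ and $b=0$, only the $m=n$ holomorphic summand contributes, yielding $\partial^{n}u(0)=n!\,k_{n}$ (and symmetrically for $\bar\partial^{n}$). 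This has the advantage of being entirely self-contained within the paper --- it uses only the established $C^{\infty}$-convergence and the normalisation of $\mathcal{P}$, with no appeal to an external reference or to the orthogonality computation behind \eqref{integralexpressionkm} --- while the paper's integral route fits naturally after the preceding proposition and is the standard device in this series of papers. Both are sound; yours is arguably the more elementary and complete argument as written.
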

\begin{proof}
The result can be proven by considering the Taylor expansion of $u$ about the origin and inserting the resulting expression into the integral on the right side of \eqref{integralexpressionkm}, followed by a passage to the limit in $\rho$. We refer to \cite[Theorem 5.3]{kopqseries} for details. 
\end{proof}
In summary, we arrive at the following unique representation for the generalised functions under consideration. 
\begin{cor}\label{seriesexpansionorderzerocompletethm}
Let $s,t,r \in \C$. Let $u$ satisfy \eqref{pqrdiffoporder0equation} on $\D$. Then
\begin{align*}\label{seriesexpansionorderzerocomplete}
u(z)&=\sum_{m=0}^{\infty}\frac{\partial^mu(0)}{m!}\mathcal{P}(r+sm,s+t|m+1;|z|^2)z^m \\ &+\sum_{m=1}^{\infty}\frac{\bar \partial^mu(0)}{m!}\mathcal{P}(r+tm,t+s|m+1;|z|^2)\bar z^m \notag,
\end{align*}
for $z \in \D$. 
\end{cor}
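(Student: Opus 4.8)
The plan is to combine the two immediately preceding results, since the corollary is nothing more than the representation of Theorem \ref{seriesexpansionpqrharmonicfunctionsorderzero} with its coefficients made explicit via Theorem \ref{coefficientsdetermined}. I would begin with a function $u$ satisfying \eqref{pqrdiffoporder0equation} on $\D$ and invoke the reverse direction of Theorem \ref{seriesexpansionpqrharmonicfunctionsorderzero}, which furnishes a series representation of the form \eqref{seriesexpansionorderzero}, absolutely convergent in $C^{\infty}(\D)$, governed by some sequence $\{k_m\}_{m \in \Z}$ of complex numbers necessarily obeying the root bound \eqref{coefficientscondition}.

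With this representation secured, I would then appeal to Theorem \ref{coefficientsdetermined} to pin down these coefficients explicitly as $k_m = \partial^m u(0)/m!$ for $m \in \N$ and $k_{-m} = \bar\partial^m u(0)/m!$ for $m \in \N$. Substituting these values directly into \eqref{seriesexpansionorderzero} reproduces the stated expansion verbatim: the first sum pairs the holomorphic Taylor data $\partial^m u(0)/m!$ with the functions $\mathcal{P}(r+sm,s+t|m+1;|z|^2)z^m$, while the second sum pairs the anti-holomorphic data $\bar\partial^m u(0)/m!$ with $\mathcal{P}(r+tm,t+s|m+1;|z|^2)\bar z^m$.

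Since both ingredients are already in place, there is no genuine analytic obstacle to overcome. The single point deserving a moment's attention is the index bookkeeping between the negative-index coefficient $k_{-m}$ and the running summation index in the second sum of \eqref{seriesexpansionorderzero}: one must confirm that the parameters $r+tm$ and $t+s$, together with the conjugated monomial $\bar z^m$, are matched against the correct value $\bar\partial^m u(0)/m!$ coming from the $|m|$-th homogeneous part in Corollary \ref{characterizationmthhomogeneouspartcor}. Once this alignment is verified, the corollary follows immediately.
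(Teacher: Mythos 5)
Your proposal is correct and follows exactly the paper's own argument: apply Theorem \ref{seriesexpansionpqrharmonicfunctionsorderzero} to obtain the expansion \eqref{seriesexpansionorderzero} with coefficients satisfying \eqref{coefficientscondition}, then identify those coefficients via Theorem \ref{coefficientsdetermined} and substitute. The index bookkeeping you flag is the only point of care, and you handle it as the paper does.
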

\begin{proof}
The result is a straightforward consequence of the preceding Theorem \ref{seriesexpansionpqrharmonicfunctionsorderzero} and Theorem \ref{coefficientsdetermined}.
\end{proof}

We conclude with an example and return to the case of Helmholtz in Proposition \ref{example1-10harmonic}.

\begin{prop}
Let $r \in \C$ and let $u$ be a twice continuously differentiable function on $\D$ that satisfies
\begin{equation*}
\partial \bar \partial u-ru=0, \quad \textnormal{in} \ \D.
\end{equation*}
Then
\begin{equation*}
u(z)=\sum_{m=1}^{\infty}\sum_{n=0}^{\infty}\bigg(\frac{ z^{m}\partial^m u(0)+\bar z^{m} \bar \partial^m u(0) }{(m+n)!}\bigg)\frac{r^n|z|^{2n}}{n!}+\sum_{n=0}^{\infty} \frac{u(0)}{n!}\frac{r^n |z|^{2n}}{n!}, \quad z \in \D.
\end{equation*}
In particular, 
\begin{equation*}
u(z)=\sum_{m=1}^{\infty}\frac{ z^{m}\partial^m u(0)+\bar z^{m} \bar \partial^m u(0)}{m!}+u(0), \quad z \in \D,
\end{equation*}
when $r=0$. 
\end{prop}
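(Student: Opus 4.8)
The plan is to specialise the general series representation of Corollary \ref{seriesexpansionorderzerocompletethm} to the parameter values $s=t=0$, which is exactly the Helmholtz case since $M_{0,0,r}=\partial\bar\partial-r$. First I would invoke that corollary to write $u$ in its series form, observing that with $s=t=0$ we have $r+sm=r$ and $s+t=0$ throughout, so that every occurrence of the basic function reduces to $\mathcal{P}(r,0|m+1;|z|^2)$ in both the holomorphic and the antiholomorphic sum.

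The key computation is then to unfold $\mathcal{P}(r,0|m+1;\cdot)$. From the definition in \eqref{pqrharmonicorderzerobasicfunction} together with the generalised Pochhammer symbol in \eqref{Pochhammersymbolmodified}, the numerator factor $(r,0)_k$ collapses to $r^k$, while the denominator satisfies $(m+1)_k=(m+k)!/m!$; this is precisely the form already recorded in Proposition \ref{example1-10harmonic}. Dividing by $m!$, as dictated by the corollary, the factors $m!$ cancel and I obtain
\begin{equation*}
\frac{1}{m!}\mathcal{P}(r,0|m+1;|z|^2)=\sum_{n=0}^{\infty}\frac{r^n}{(m+n)!}\frac{|z|^{2n}}{n!},
\end{equation*}
so that each homogeneous term of $u$ becomes a double series indexed by $m$ and $n$.

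Next I would isolate the $m=0$ term of the holomorphic sum, which contributes $\sum_{n=0}^{\infty}u(0)r^n|z|^{2n}/(n!)^2$, and then gather the remaining holomorphic contributions ($m\geq 1$) together with the antiholomorphic contributions ($m\geq 1$) into a single double sum over $m\geq 1$ and $n\geq 0$. Since the coefficients of $z^m$ and of $\bar z^m$ carry the identical weight $r^n|z|^{2n}/((m+n)!\,n!)$, they combine into the numerator $z^m\partial^m u(0)+\bar z^m\bar\partial^m u(0)$, yielding the first displayed identity. This regrouping is a mere reindexing of a series that already converges absolutely in $C^{\infty}(\D)$ by Corollary \ref{seriesexpansionorderzerocompletethm}, so no convergence question arises.

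For the final assertion I would set $r=0$: every inner term with $n\geq 1$ then vanishes and only the $n=0$ contribution survives, collapsing the double sum to $\sum_{m=1}^{\infty}(z^m\partial^m u(0)+\bar z^m\bar\partial^m u(0))/m!$ and the separated term to $u(0)$. The only points requiring any attention are the bookkeeping in the Pochhammer simplification $(m+1)_k=(m+k)!/m!$ and the clean separation of the $m=0$ index from the two $m\geq 1$ sums; both are entirely routine, so I do not expect any substantive obstacle once the general representation is in hand.
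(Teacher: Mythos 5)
Your proposal is correct and follows exactly the route the paper intends: the Proposition is stated as an immediate specialization of Corollary \ref{seriesexpansionorderzerocompletethm} to $s=t=0$, and your unfolding of $\mathcal{P}(r,0|m+1;|z|^2)$ via $(r,0)_k=r^k$ and $(m+1)_k=(m+k)!/m!$, the separation of the $m=0$ term, and the collapse to $n=0$ when $r=0$ are precisely the intended bookkeeping. No gaps.
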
 

\section*{Acknowledgements}

The author would like to thank Anders Olofsson and Jens Wittsten for their valuable comments and suggestions in relation to this text. 

\bigskip

Research was partially supported by the Swedish Research Council grant number 2019-04878.  


\begin{thebibliography}{99}


\bibitem{ABC} Ahern P, Bruna J, Cascante C, 
$H^p$-theory for generalized $\mathcal{M}$-harmonic functions in the unit ball, 
{\em Indiana Univ. Math. J.} {\bf 45} 103-135 (1996). 

\bibitem{LA} Ahlfors L, An introduction to the theory of analytic functions of one complex variable, Third ed., Rhode Island: AMS (2021).

\bibitem{AAR} Andrews GE, Askey R, Roy R, Special functions, 
{\em Encyclopedia Math. Appl.}, Cambridge: Cambridge University Press, (1999).

\bibitem{Behm} Behm G, Solving Poisson's equation for 
the standard weighted Laplacian in the unit disc, 
{\em arXiv:1306.2199v2} (2014).    

\bibitem{BH} Borichev A, Hedenmalm H, Weighted integrability 
of poly\-harmonic functions, {\em Adv. Math.} {\bf 264} 464-505 (2014). 

\bibitem{CW} Carlsson M, Wittsten J, 
The Dirichlet problem for standard weighted Laplacians 
in the upper half plane, 
{\em J. Math. Anal. Appl.} {\bf 436} 868-889 (2016). 

\bibitem{Perala} Congwen L, Per\"al\"a A, Si J. Weighted integrability of polyharmonic functions in the higher dimensional case, {\em Analysis and PDE} {\bf 7} (2021). 
 
\bibitem{diaz} Diaz R, Pariguan E, 
On hypergeometric functions and Pochhammer k-symbols, {\em arXiv:math/0405596} (2005). 
 
\bibitem{Garabedian}
Garabedian PR, A partial differential equation arising in 
conformal mapping, {\em Pacific J. Math.} {\bf 1} 485-524 (1951).

\bibitem{Geller} Geller D, 
Some results in $H^p$ theory for the Heisenberg group,
{\em Duke Math. J.} {\bf 47} 365-390 (1980). 

\bibitem{Katznelson}
Katznelson Y, An Introduction to Harmonic Analysis, Third ed., Cambridge: Cambridge University Press (2004). 

\bibitem{kopqseries} Klintborg M, Olofsson A,
A series expansion for generalized harmonic functions,
{\em Analysis and Mathematical Physics} {\bf 11} (2021).

\bibitem{K} Klintborg M, 
Series representations for generalized harmonic functions in the case of three parameters, {\em Complex Var. Elliptic Equ.} {\bf 69} 677-694 (2022).

\bibitem{GHFS} Klintborg M, 
Generalized Harmonic Function Structures, {\em arXiv: 2502.20595} (2025).

\bibitem{LC} Li M, Chen X, 
Schwarz Lemma for solutions of the $\alpha$-harmonic Equation, 
{\em Bull Malays. Math. Sci. Soc.} {\bf 45} 2691-2713 (2022).

\bibitem{LW} Li P, Wang X, 
Lipschitz continuity of $\alpha$-harmonic functions, 
{\em Hokkaido Math. J.} {\bf 48} 85-97 (2019). 

\bibitem{LWX} Li P, Wang X,  Xiao  Q, 
Several properties of $\alpha$-harmonic functions in the unit disk, 
{\em Monatsh. Math.} {\bf 184} 627-640 (2017). 

\bibitem{O14} Olofsson A, 
Differential operators for a scale of Poisson type kernels in the unit disc, 
{\em J. Anal. Math.} {\bf 123} 227-249 (2014). 

\bibitem{O} Olofsson A, 
On a weighted harmonic Green function and a theorem of Littlewood, 
{\em Bull. Sci. Math.} {\bf 158} (2020). 

\bibitem{OLipschitz} Olofsson A, 
Lipschitz continuity for weighted harmonic functions in the unit disc,  
{\em Complex Var. Elliptic Equ.}  {\bf 65} 1630-1660 (2019). 

\bibitem{OW} Olofsson A, Wittsten J, 
Poisson integrals for standard weighted Laplacians in the unit disc, 
{\em J. Math. Soc. Japan}  {\bf 65} 447-486 (2013).

\bibitem{OWUHP} Olofsson A, Wittsten J, 
Uniqueness theorems for weighted harmonic functions in the upper half plane, 
{\em J. Anal.Math} {\bf 152} 317-359 (2024). 

\bibitem{W} Wittsten J, 
generalised axially symmetric potentials with distributional boundary values, 
{\em Bull. Sci. Math.} {\bf 139} 892-922 (2015).

\end{thebibliography}
\end{document}